\newtheorem{theorem}{Theorem}[section]
\newtheorem{corollary}[theorem]{Corollary}
\newtheorem{proposition}[theorem]{Proposition}
\newtheorem{lemma}[theorem]{Lemma}
\newtheorem{remark}[theorem]{Remark}
\newtheorem{definition}[theorem]{Definition}
\newcommand{\R}{\mathbb{R}}
\newcommand{\C}{\mathbb{C}}
\newcommand{\N}{\mathbb{N}}
\newcommand{\suma}[1]{\sum\limits_{#1}}
\newcommand*{\bigchi}{\mbox{\large$\chi$}}
\newcommand{\la}{\langle}
\newcommand{\ra}{\rangle}
\newcommand{\Lp}{L_p}
\newcommand{\Lone}{L_1}
\newcommand{\Linfty}{L_\infty}
\newcommand{\Ltwo}{L_2}
\newcommand{\Lq}{L_q}
\newcommand{\Lphalf}{L_{p/2}}
\newcommand{\Lpprime}{L_{p^\prime}}
\newcommand{\Ltwozero}{L_2^\circ}
\newcommand{\Ltwoc}{L_2^c}
\newcommand{\Ltwor}{L_2^r}
\newcommand{\Ltwoczero}{L^{\circ,c}_2}
\newcommand{\Hone}{\mathrm{H}_1}
\newcommand{\Honec}{\mathrm{H}_1^c}
\newcommand{\Honer}{\mathrm{H}_1^r}
\newcommand{\Hc}{H^c}
\newcommand{\Kc}{K^c}
\newcommand{\BMOc}{\mathrm{BMO}^c}
\newcommand{\BMOr}{\mathrm{BMO}^r}
\newcommand{\BMO}{\mathrm{BMO}}
\newcommand{\Hardyc}{\Honec(\mathcal{A})}
\newcommand{\Hardyr}{\Honer(\mathcal{A})}
\newcommand{\Hardy}{\Hone(\mathcal{A})}
\newcommand{\id}{\mathrm{Id}}
\newcommand{\Tw}{\widetilde{T}_\omega}
\newcommand{\Avnat}{\Linfty(\mathbb{R}) \overline{\otimes} \mathcal{M}}
\title{Calder\'on-Zygmund theory with noncommuting kernels via $H_1^c$}
\author[A.I. Cano-M\'armol]{Antonio Ismael Cano-M\'armol}
\address{Department of Mathematics \\ Baylor University \\
1301 S University Parks Dr\\
Waco, TX 76798, USA.}
\email{AntonioIsmael\_CanoMa@baylor.edu}
\author[\'E. Ricard]{\'Eric Ricard}
\address{Laboratoire de Math\'ematiques Nicolas Oresme \\ UNICAEN, CRNS \\
1400 Caen, France.}
\email{eric.ricard@unicaen.fr}
\begin{document}

    \subjclass[2020]{42B20, 42B35, 46L51, 46L52, 46L53}

    \keywords{Hardy space, BMO space, von Neumann algebra, non-commutative $L_p$ space, Calder\'on-Zygmund theory, atoms}
    
    \maketitle


    \begin{abstract}
    We study an alternative definition of the $\Hone$-space associated to a semicommutative von Neumann algebra $L_\infty(\mathbb{R}) \overline{\otimes} \mathcal{M}$, first studied by Mei. We identify a ``new'' description for atoms in $\Hone$. We then explain how they can be used to study $\Hone^c$-$\Lone$ endpoint estimates for Calder\'on-Zygmund operators with noncommuting kernels.
\end{abstract}



\maketitle

\section*{Introduction}

    This paper is related to the theory of semicommutative
    Calder\'on-Zyg\-mund operators. This is a research line that takes
    advantage of the hybrid nature of certain vector valued $\Lp$-spaces. Let $(\mathcal{M},\tau)$ be a von Neumann algebra of
    operators on a separable Hilbert space, equipped with a normal
    semifinite faithful trace $\tau$. Denote by
    $\mathcal{A}$
    the weak operator closure of the space of essentially bounded
    (strongly measurable) functions
    $f : \mathbb{R} \longrightarrow \mathcal{M}$ acting on $L_2(\mathbb R ; L_2(\mathcal M))$. The
    von Neumann algebra $\mathcal{A}$ can be identified with the
    tensor product
    $\Linfty(\mathbb{R}) \overline{\otimes} \mathcal{M}$ equipped with
    the trace
    \begin{align*}
        \varphi(f) = \int_{\mathbb{R}} \tau(f(x)) \ dx.
    \end{align*}  
    For the sake of
    exposition, we will restrict ourselves to dimension 1, even
    though our arguments extend trivially to any finite dimension namely for $\Linfty(\mathbb R^n) \overline \otimes \mathcal M$.

 The noncommutative $\Lp$-spaces associated with $\mathcal{A}$ are indeed vector-valued $\Lp$-spaces: more clearly \cite[Chapter 3]{P98}
$$
\Lp(\mathcal{A}) = \Lp(\mathbb{R};\Lp(\mathcal{M})),
$$    
for $1\leq p<\infty$. We are interested in endpoint estimates for
operators acting on $\Lp(\mathcal{A})$, and in particular in the
boundedness of operators from the operator-valued version of the Hardy
space $\Hone$ into $\Lone$. This question was widely studied in the
classical setting for scalar-valued functions \cite{Mey90,MeyC97} as
well as for vector-valued functions \cite{F90,H06}, where the
existence of the atomic decomposition plays an essential role. This
technique does not seem to have been exploited as often in the
noncommutative setting except perhaps in \cite{HLMP14}. Mei \cite{M07} was the first to introduce the
so-called \emph{operator-valued Hardy space}
$\Hone(\mathbb{R},\mathcal{M})$ in this context via noncommutative
equivalents of the Poisson integral, the Lusin area integral and the
Littlewood-Paley $g$ function. These techniques allowed Mei to identify
the dual space of $\Hone(\mathbb{R},\mathcal{M})$, which is denoted by
$\BMO(\mathbb{R},\mathcal{M})$, in the spirit of the classical
argument by Fefferman and Stein \cite{FS72}. Moreover, some maximal
inequalities, and several interpolation results via a martingale
approach were established. Mei's fundamental contribution has been a key
in the development of noncommutative forms of Calder\'on-Zygmund
theory, both in the semicommutative context and in fully
noncommutative ones via transference techniques. For the first one,
the semicommutative Calder\'on-Zygmund theory was initiated in
\cite{P09} with the first weak-$\Lone$ endpoint inequalities
for singular integrals; the arguments  were simplified in recent
years \cite{C18,CCP22}. The second line has many instances, among
which are \cite{GJP21,JMPX21}.
    
The initial motivation for the present work was obtaining new
interpolation consequences of endpoint estimates of the type
$\Linfty(\mathcal{A})$-$\BMO(\mathbb{R},\mathcal{M})$ which rely, by
duality, on the structure of the Hardy space
$\Hone(\mathbb{R},\mathcal{M})$. Our goal led to two main tasks to
tackle: a completely explicit description of
$\BMO(\mathbb{R},\mathcal{M})$, and the study of the boundedness of
Calder\'on-Zygmund operators on the Hardy space via its atomic
decomposition. This may seem common but another of our goals  is to provide complete proofs; for instance, in many papers the extension from atoms to the whole Hardy space is overlooked but requires more delicate arguments than the main estimates  (see \cite{B05}). We also want to point out that up to now there is no notion of noncommutative distributions, thus one has to be careful when defining Calder\'on-Zygmund operators.

The operator-valued $\BMO$-space introduced by
Mei, $\BMO(\mathbb{R},\mathcal{M})$, is defined as the
intersection of a column space and a row space,
$\BMOc(\mathbb{R},\mathcal{M})$ and $\BMOr(\mathbb{R},\mathcal{M})$
respectively. The reason for considering both a column and a row space
is a ubiquitous phenomenon in noncommutative analysis (see \cite{LP86}
for an outstanding example), and by symmetry we shall limit our
discussion to the column case. $\BMOc(\mathbb{R},\mathcal{M})$ is set
to be the subspace of the column Hilbert-valued space \cite{JLX06}
$\Linfty(\mathcal{M};\Ltwoc(\mathbb{R},\frac{dt}{1+t^2}))$ on  which
the seminorm
\begin{align}\label{eq:introBMOcnorm}\tag{MBMO}
        \|g\|_{\BMOc} = \sup_{\substack{I\subseteq \mathbb{R} \\ |I| \ \mathrm{finite}}}\Big\| \Big( \frac{1}{|I|} \int_I \big| g - g_I \big|^2 \Big)^{1/2} \Big\|_{\mathcal{M}}
    \end{align}
makes sense, where $g_I = \frac{1}{|I|} \int_I g$. The $\BMOr$ seminorm is $\|g\|_{\BMOr} = \|g^*\|_{\BMOc}$. The space $\Linfty(\mathcal{M};\Ltwoc(\mathbb{R},\frac{dt}{1+t^2}))$ denotes the closure of $\mathcal{M} \otimes \Ltwo(\mathbb{R},\frac{dt}{1+t^2})$ with respect to the weak$^*$ topology of the von Neumann algebra $\mathcal{M} \overline{\otimes} B(\Ltwo(\mathbb{R},\frac{dt}{1+t^2}))$. There is no guarantee that $\BMOc(\mathbb{R},\mathcal{M})$ is a space of $\mathcal{M}$-valued functions (in the Bochner sense), and so the integral in \eqref{eq:introBMOcnorm} may not be well-defined. Indeed,
$$
\Ltwo\big(\mathbb{R},\frac{dt}{1+t^2};\mathcal{M}\big) \subset \Linfty\big(\mathcal{M};\Ltwoc(\mathbb{R},\frac{dt}{1+t^2})\big),
$$
but the reverse inclusion fails in general, \emph{a priori} preventing us from defining $\BMOc$ as a space of functions. In section 1 and the first part of section 2, we propose a general construction of $\BMOc(\mathbb{R},\mathcal{M})$ which recovers Mei's description given by \eqref{eq:introBMOcnorm}. We will also study a predual of $\BMOc(\mathbb{R},\mathcal{M})$ (resp. $\BMOr(\mathbb{R},\mathcal{M})$). The novelty here is that this predual space, which we denote as $\Hardyr$ (resp. $\Hardyc$), will be a row (resp. column) Hardy space which is exclusively constructed in terms of ``new'' atomic decompositions, which extend the work of the second author's PhD Thesis \cite{R01}.

    The key to our approach is the $\Hone$-$\BMO$ duality product when elements in the former space are described in terms of atoms. In the classical case, it is a well-known fact that the norm of $g\in \BMO(\mathbb{R})$ can be characterized through the expression
    \begin{align}\label{eq:introBMOnormatoms}\tag{atBMO}
        \|g\|_{\BMO} = \sup_{a} \big| \int g a \big|,
    \end{align}
    so that the supremum is taken over $\Ltwo$-atoms \cite{GR85,Mey90,MeyC97}. An analogous formula for $\BMOr(\mathbb{R},\mathcal{M})$ may shed light on the structure of atoms in $\Hardyc$.  This is exactly what we achieve. First, it turns out that it is enough to consider only elements $g\in\mathcal{A} \cap \BMOr(\mathbb{R},\mathcal{M})$. The expression \eqref{eq:introBMOcnorm} is meaningful for $g$, and so duality yields
    \begin{align*}
        \|g\|_{\BMOr} &= \|g^*\|_{\BMOc} = \sup_{I} \Big\| \Big( \frac{1}{|I|} \int_I \big| g^* - (g^*)_I \big|^2 \Big)^{1/2} \Big\|_{\mathcal{M}} 
        \\ & = \sup_{I,\|h\|_{\Ltwo(\mathcal M)}\leq 1} \Big( \int_I \|\frac{1}{\sqrt{|I|}} h (g - g_I)\|^2_{\Ltwo(\mathcal{M})} \Big)^{1/2}.
    \end{align*}
    with the supremum taken over $h$ in the unit ball of $\Ltwo(\mathcal{M})$. Now, recalling that $g - g_I$ has zero integral over $I$, it follows that
    $$
        \|g\|_{\BMOr} = \sup_{I,h} \big\| \frac{1}{\sqrt{|I|}} h (g - g_I) \bigchi_I \big\|_{\Ltwozero(\mathbb{R};\Ltwo(\mathcal{M})} = \sup_{I,h} \sup_{\|f\|_{\Ltwo} \leq 1} \big| (\tau \circ \int)(h (g-g_I) \frac{f \bigchi_I}{\sqrt{|I|}})\big|.$$
    Comparing the latter expression with \eqref{eq:introBMOnormatoms} suggests that an atom in $\Honec(\mathbb{R},\mathcal{M})$ should be an operator of the form $a = b h$ in $\Lone(\mathcal{A})$,  where $h \in \Ltwo(\mathcal{M})$ with $\|h\|_{\Ltwo(\mathcal{M})} \leq 1$ and $b \in \Ltwo(\mathcal{A})$ is supported on some interval $I$ and has an additional cancellation over $I$ that we will make precise later.
In  what follows, $a$ will be called a \emph{$c$-atom}. Then, define the column Hardy space $\Hardyc$ as the Banach subspace of $\Lone(\mathcal{A})$ of those operators $f$ such that
    \begin{align*}
        f = \sum_{i=0}^\infty \lambda_i a_i \ \mathrm{in} \ \Lone(\mathcal{A}), \ \mathrm{for \ some} \ (a_i)_i \ c\textrm{-atoms}, \ (\lambda_i)_i \in \ell_1,
    \end{align*}
    which becomes a Banach space with respect to the norm
    \begin{align*}
        \|f\|_{\Honec} = \inf\big\{ \sum_{i=0}^\infty |\lambda_i| \ : \ f = \sum_{i=0}^\infty \lambda_i a_i \big\}.
    \end{align*}
    The row space $\Hardyr$ is defined analogously and $\Hardy = \Hardyc + \Hardyr$. By symmetry, it suffices to show $\Hardyc^* = \BMOr(\mathbb{R},\mathcal{M})$. The proof of this duality result strongly relies on the extension of a well-known argument by Meyer \cite{Mey90}: the space
    \begin{align*}
        \Ltwozero(\mathbb{R},(1+t^2)dt) = \big\{ f \in \Ltwo(\mathbb{R},(1+t^2)dt) \ : \ \int_{\mathbb{R}} f = 0 \big\}
    \end{align*}
    is a dense subspace of the classical atomic Hardy space
    $\Hone(\mathbb{R})$. A further characterization of the column
    Hilbert-valued spaces $\Linfty(\mathcal{M};\Hc)$ will be the key
    to establish an analogous result in our context. We will briefly
    explain how to get that the column Hardy space $\Hardyc$ coincides with the
    one introduced by Mei in \cite{M07} in section \ref{example}. This
    will in particular allow us to indeed use interpolation as usual.

    It is worth noting that Mei's work already contains a description
    of $\Honec(\mathbb{R},\mathcal{M})$ in terms of certain atomic
    decompositions. However, the one included in the present paper
    seems more useful to deal with Calder\'on-Zygmund operators and to
    make connections with vector valued harmonic analysis. Indeed, it
    allows us to consider singular integrals with noncommuting
    kernels, something that looks difficult to reach at the
    weak-$\Lone$ level \cite{CCP22}.  Let $\mathcal{M}$
      be a von Neumann algebra over a separable Hilbert space, we
      denote by $\mathcal S$ the set of compactly supported  essentially
      bounded functions
      $\mathbb R\to \Linfty \cap \Lone(\mathcal{M})$ (measurable with values in $L_1$); it
      will play the role of test functions.
Let $T$ be a
    bounded operator on $\Ltwo(\mathcal{A})$ for which there exists a
    kernel
    $K : \mathbb{R} \times \mathbb{R} \ \setminus \ \{x=y\}
    \longrightarrow \mathcal{M}$ such that
    \begin{align*}
        \int T(f)(x) g(x) \ dx = \int \int K(x,y) f(y) g(x) \ dx \ dy
    \end{align*}
    holds for any $f,g \in \mathcal S$ satisfying that the distance between the supports $\mathrm{supp}\|f\|_{\Ltwo(\mathcal{M})}$ and $\mathrm{supp}\|g\|_{\Ltwo(\mathcal{M})}$ is strictly positive. In that case, We say that $T$ is a Calder\'on-Zygmund operator with kernel $K$. Also, assume that $T$ fulfills
    \begin{itemize}
        \item a right-modularity condition: $T(fh) = T(f)h$ for any $f \in \Ltwo(\mathcal{A})$ with compact support and $h \in \mathcal{M}$; we say that $T$ is a left Calder\'on-Zygmund operator,
        \item the \emph{Hörmander condition} for some $\lambda>0$
        \begin{align*}
            \int_{|x-y| \geq \lambda |y^\prime -y|} \|K(x,y)-K(x,y^\prime)\|_{\mathcal{M}} \ dx < \infty.
        \end{align*}        
      \end{itemize}
      
        Under these assumptions, $T$ extends to a bounded map $T : \Hardyc \longrightarrow \Lone(\mathcal{A})$. The proof of this statement is inspired by \cite{MeyC97} and is divided in two steps. The first one consists in obtaining a  constant $C_\lambda > 0$ such that
    \begin{align*}
        \|T(a)\|_{\Lone(\mathcal{A})} \leq C_\lambda \ \textrm{for \ any \  $c$-atom} \ a.
    \end{align*}
    
    In order to set up this statement, we first check that the extension of $T$ to $c$-atoms satisfies
    \begin{align*}
        T(a) = T(b)h \ \textrm{for \ any \ $c$-atom}\  a = bh.
    \end{align*}
    This allows us to exploit the boundedness of $T$ on $\Ltwo(\mathcal{A})$.
    We rely on
    an approximation of $T$ and $K$ by certain  uniformly bounded kernels, which also implies that $T$ extends to the whole $\Hardyc$. Once we have done that, we obtain that whenever $K$ is scalar and modularity works on both sides,  then $T$ extends to a bounded operator from $\Hardy$ into $\Lone(\mathcal{A})$. This is  somehow a dual statement of a result in \cite{JMP14} where $L_\infty$-BMO estimates are given and  in spirit close to \cite{HLMP14}.

    The rest of the paper is organized as follows. In section 1, we
    shall introduce the column and row Hilbert-valued noncommutative
    $\Lp$-spaces. This enables us to define
    $\BMO(\mathbb{R,\mathcal{M}})$, as well as to identify a
    predual $\Hardyc$ in section 2. In section 3 we will see how the
    atomic decomposition provides a boundedness result for
    Calder\'on-Zygmund operators with noncommuting kernels. We then
    briefly explain in section 4 how to connect our construction
    to Mei's one. We end with some technical but useful lemmas in the appendices.

    \section{Column/row Hilbert-valued $\Lp$-spaces}

  In this section we collect some basic facts on classical vector-valued $L_p$-spaces \cite{HNVW16}, as well as its interaction with row and column Hilbert-valued $L_p$-spaces \cite{JLX06}.

Before giving definitions, some facts about vector-valued functions and von Neumann algebras should be tackled (see \cite[Chapter 2]{DU77} and \cite[Section 1.22]{S71}).  Let $\mathbb{X}$ be a Banach space, and let $(\Omega,\mu)$ be a $\sigma$-finite measure space (or more generally, a localizable measure space \cite{S73}). Then, a function $f : \Omega \longrightarrow \mathbb{X}$ is said to be \emph{$\mu$-measurable} whenever there exists a sequence of simple functions $(f_n)_{n \geq 1}$, $f_n = \sum_{i} x_i^n \ \bigchi_{A_i^n}$ for some $x_i^n \in \mathbb{X}$ and $\mu-$measurable sets $A_i^n$, such that
    \begin{align*}
        \lim_{n \rightarrow \infty} \|f_n - f \|_{\mathbb X} = 0 \ \mu\mathrm{-almost \ everywhere}.
    \end{align*}
    For $1\leq p<\infty$, one can then define
    $\Lp(\Omega,\mu;\mathbb{X})$ as the space of $\mu$-measurable
    functions $f$ for which $\|f(\cdot)\|_{\mathbb{X}}\in \Lp(\Omega,\mu)$
    with the norm
    $\|f\|_p=\| \|f(\cdot)\|_{\mathbb{X}}\|_{\Lp(\Omega,\mu)}$.
    In our situation, we recall the classical identities:
    \begin{align*}
      \Lone(\Omega,\mu;\Lone(\mathcal M))=\Lone(L_\infty(\Omega,\mu)\overline \otimes \mathcal M)=  \Lone(\Omega,\mu)\widehat \otimes_\pi \Lone(\mathcal M);\\
      \Ltwo(\Omega,\mu;\Ltwo(\mathcal M))=\Ltwo(L_\infty(\Omega,\mu)\overline \otimes \mathcal M)=\Ltwo(\Omega,\mu) \otimes_2 \Ltwo(\mathcal M),
    \end{align*}
    where $\widehat \otimes_\pi$ and $\otimes_2$ are the Banach space
    projective tensor product and the Hilbert space tensor product. On
    the other hand, the identification of the von Neumann algebra
    $\Linfty(\Omega,\mu) \overline{\otimes} \mathcal{M}$ as a space of
    vector-valued functions requires a more subtle construction. In
    general, a function $f : \Omega \longrightarrow \mathbb{X}^*$ is
    said to be \emph{weak$^*$ $\mu$-measurable} if $J_x \circ f$ is
    measurable for each $x \in \mathbb{X}$, where $J_x$ denotes the
    continuous functional on $\mathbb{X}^*$ given by
    $J_x(x^*) = x^*(x)$ for every $x^*$ in $\mathbb{X}^*$. Consider
    $\mathbb{X} = \Lone(\mathcal{M})$  (which is assumed to be separable to avoid measurability issues) and define
    $\Linfty(\Omega,\mu;\mathcal{M})$ as the Banach space of all
    $\mathcal{M}$-valued weak$^*$ $\mu-$measurable functions which are
    essentially bounded, that is,
    \begin{align*}
        {\mathrm{ess}\sup}_{t \in \Omega} \|f(t)\|_{\mathcal{M}} < \infty.
    \end{align*}
    Then, $\Linfty(\Omega,\mu;\mathcal{M})$ is a von Neumann algebra under the pointwise multiplication and the map
    \begin{align*}
        f \otimes m \mapsto f(t)m, \quad f \in \Linfty(\Omega), \ m \in \mathcal{M},
    \end{align*}
    can be extended to a weak$^*$ isomorphism of $\Linfty(\Omega,\mu) \overline{\otimes} \mathcal{M}$ onto $\Linfty(\Omega, \mu;\mathcal{M})$.

    \medskip

    Let $H$ be a separable
    Hilbert space. $B(H)$ can be identified as the space of bounded
    infinite matrices acting on $H$ (once a basis is fixed). When equipped with the usual
    trace for matrices $\mathrm{Tr}$, it gives rise to the Schatten
    classes $S_p(H) = \Lp(B(H),\mathrm{Tr})$ for any
    $0 < p \leq \infty$. Along this work, the inner product in $H$ is
    assumed to be linear in the second variable and antilinear in the
    first one. Moreover, elements of $H$ viewed in the dual Hilbert
    space $H^*$ will be represented with an overlined letter. We mean
    that $\overline{h}$ will denote the continuous functional
    \begin{align*}
        \overline{h} : k \in H \mapsto \langle h, k \rangle \ \mathrm{for \ any \ } k \in H.
    \end{align*}
    Given two elements $\xi$ and $\eta$ of $H$, we consider
    the \emph{rank-one operator} $\xi \otimes \overline \eta$ acting on $H$ as
    follows
    \begin{align*}
        (\xi \otimes \overline \eta)(h) = \langle \eta, h \rangle \ \xi \ \mathrm{for \ any \ } h \in H.
    \end{align*}

    In the following, let $\mathds{1}$ denote a fixed element of $H$ with $\|\mathds{1}\|_H = 1$, and let $p_{\mathds{1}} = \mathds{1} \otimes \overline{\mathds{1}}$ denote the rank one projection onto $\mathrm{span}\{\mathds{1}\}$. Assume that $\mathcal{M}$ is an arbitrary semifinite von Neumann algebra equipped with a normal semifinite faithful trace $\tau$. Then, we define the \emph{column Hilbert-valued $\Lp$-space}
    \begin{align*}
        \Lp(\mathcal{M};\Hc) = \Lp(\mathcal{M}\overline{\otimes} B(H))(\mathbf{1}_{\mathcal{M}}\otimes p_{\mathds{1}}).
    \end{align*}
    for any $0 < p \leq \infty$. Identify $\Lp(\mathcal{M})$ as a subspace of $\Lp(\mathcal{M} \overline{\otimes} B(H))$ via the map  $m \mapsto m \otimes p_{\mathds{1}} = ( \mathbf{1}_\mathcal{M} \otimes p_{\mathds{1}}) (m \otimes \mathbf{1}_{B(H)}) ( \mathbf{1}_\mathcal{M} \otimes p_{\mathds{1}})$. This is equivalent to the identity
    \begin{align*}
        \Lp(\mathcal{M}) = (\mathbf{1}_{\mathcal{M}} \otimes p_{\mathds{1}}) \Lp(\mathcal{M} \overline{\otimes} B(H)) (\mathbf{1}_{\mathcal{M}} \otimes p_{\mathds{1}}).
    \end{align*}
    Then, given an element $f$ in $\Lp(\mathcal{M};\Hc)$,
    $$
    f^*f \in (\mathbf{1}_{\mathcal{M}} \otimes p_{\mathds{1}}) \Lphalf(\mathcal{M} \overline{\otimes} B(H)) (\mathbf{1}_{\mathcal{M}} \otimes p_{\mathds{1}}) = \Lphalf(\mathcal{M}).$$
    This justifies defining
    \begin{align*}
        \|f\|_{\Lp(\mathcal{M};\Hc)} = \|f\|_{\Lp(\mathcal{M}\overline{\otimes} B(H))}=\|(f^*f)^{1/2}\|_{\Lp(\mathcal{M})}
    \end{align*}
    on $\Lp(\mathcal{M};\Hc)$. Analogously, the \emph{row Hilbert-valued $\Lp$-space} is
    \begin{align*}
        \Lp(\mathcal{M};H^{*r}) = (\mathbf{1}_{\mathcal{M}} \otimes p_{\mathds{1}}) \Lp(\mathcal{M} \overline{\otimes} B(H))
    \end{align*}
    equipped with the norm 
    \begin{align*}
        \|f\|_{\Lp(\mathcal{M};H^{*r})} = \|(f f^*)^{1/2}\|_{\Lp(\mathcal{M})}=\|f^*\|_{\Lp(\mathcal{M};\Hc)}.
    \end{align*}
       
    Given elements $h\in H$ and $m\in \Lp(\mathcal M)$, we will write
    $m\otimes h \in \Lp(\mathcal{M}) \otimes H$ for the element
    $m \otimes (h\otimes \overline{\mathds{1}})\in \Lp(\mathcal{M};\Hc)$ and similarly
   $m\otimes \overline h$  means $m \otimes (\mathds{1}\otimes \overline h)\in \Lp(\mathcal{M};H^{*r})$.With these notations,
    for $f=\sum_{i=1}^n m_i\otimes h_i$ and $g=\sum_{i=1}^n m_i\otimes \overline h_i$:
    \begin{eqnarray*}
      \|f\|_{\Lp(\mathcal{M};\Hc)} &=&  \Big\| \Big( \sum_{i,j=1}^n \langle h_i,h_j\rangle_H \ m_i^* m_j \Big)^{1/2}\Big\|_{\Lp(\mathcal{M})},\\
      \|g\|_{\Lp(\mathcal{M};H^{*r})} &=& \Big\| \Big( \sum_{i,j=1}^n \langle h_i,h_j\rangle_H \ m_i m_j^* \Big)^{1/2}\Big\|_{\Lp(\mathcal{M})}.
    \end{eqnarray*}
    In this way, we do not need to refer to the function $\mathds{1}$. Moreover, we have that $(m\otimes h)^*= m^* \otimes \overline h$ as operators. We will use without reference that $\Lp(\mathcal M)\otimes H$ is dense (resp. weak$^*$ dense) in $\Lp(\mathcal{M};\Hc)$ for $1\leq p<\infty$ (resp. $p=\infty$) and similarly for rows.

    In fact, column and row Hilbert-valued $\Lp$-spaces satisfy the
    expected duality relations expressed via the natural duality bracket 
    \begin{align}
      \la f, g\ra_{r,c} = \mathrm{Tr\otimes \tau }(fg),& &
     \la m\otimes \overline h, m'\otimes h'\ra_{r,c}= \tau (mm') \la h,h'\ra . \label{eq:dualityBracket}
    \end{align}
    In particular, it holds linearly isometrically
    \begin{align*}
        \Lp(\mathcal{M};\Hc)^* = \Lpprime(\mathcal{M};H^{*r}) \ \mathrm{and} \ \Lp(\mathcal{M};H^{*r})^* = \Lpprime(\mathcal{M};\Hc).
    \end{align*}
for any $1 \leq p < \infty$ whenever $1/p + 1/p^\prime =1$. On the other hand, we recall the following fact about homogeneity of column and row Hilbert spaces, see \cite[Lemma 2.4]{JLX06}. We state it only for columns.

    \begin{corollary}\label{cor:extension}
      Let $H$ and $K$ be two Hilbert spaces, and let
      $T : H \longrightarrow K$ be a bounded linear operator. Then
      $\id_{L_p(\mathcal{M})} \otimes T$ admits a unique continuous 
      extension (resp. weak$^*$ continuous) $\widetilde{T}$ from $\Lp(\mathcal{M};\Hc)$ into
      $\Lp(\mathcal{M};\Kc)$ for $1\leq p<\infty$ (resp. $p=\infty)$ with the same norm. \end{corollary}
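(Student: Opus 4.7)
The natural definition is $\widetilde{T}(\sum_i m_i\otimes h_i)=\sum_i m_i\otimes T(h_i)$ on the algebraic tensor product $\Lp(\mathcal M)\otimes H$, and the heart of the proof is to check the bound $\|\widetilde T(f)\|_{\Lp(\mathcal M;\Kc)}\leq \|T\|\,\|f\|_{\Lp(\mathcal M;\Hc)}$ on this dense subspace. Using the formula recalled just before the statement, for $f=\sum_{i=1}^n m_i\otimes h_i$ one has
\[
\|\widetilde T(f)\|_{\Lp(\mathcal M;\Kc)}^2 \ =\ \Big\|\sum_{i,j=1}^n \la T(h_i),T(h_j)\ra_K\, m_i^*m_j\Big\|_{\Lphalf(\mathcal M)}.
\]
The key observation is that the scalar matrix $A=(\la T(h_i),T(h_j)\ra_K)_{i,j}$ satisfies $0\leq A\leq \|T\|^2 G$ in $M_n$, where $G=(\la h_i,h_j\ra_H)_{i,j}$; indeed, $\la(\|T\|^2 G-A)\xi,\xi\ra_{\mathbb C^n}=\|T\|^2\|\sum_i \xi_i h_i\|_H^2-\|T(\sum_i\xi_i h_i)\|_K^2\geq 0$ for every $\xi\in\mathbb C^n$.

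Writing $\|T\|^2 G-A=B^*B$ with $B\in M_n$, one factors
\[
\sum_{i,j}(\|T\|^2G-A)_{i,j}\, m_i^*m_j \ =\ \sum_k\Big(\sum_j B_{kj}m_j\Big)^*\Big(\sum_j B_{kj}m_j\Big)\ \geq\ 0
\]
as a positive element of $\Lphalf(\mathcal M)$, and the monotonicity of the (quasi-)norm on $\Lphalf(\mathcal M)^+$ (valid for all $p>0$ in the tracial setting) yields the announced bound. For $1\leq p<\infty$, the norm density of $\Lp(\mathcal M)\otimes H$ in $\Lp(\mathcal M;\Hc)$ recalled in the excerpt gives a unique continuous extension $\widetilde T$ of norm at most $\|T\|$, and testing on $f=m\otimes h$ with $\|m\|_{\Lp(\mathcal M)}=1$ produces $\|\widetilde T(f)\|_{\Lp(\mathcal M;\Kc)}=\|T(h)\|_K$ while $\|f\|_{\Lp(\mathcal M;\Hc)}=\|h\|_H$, giving the reverse inequality.

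The case $p=\infty$ I would handle by duality. The same matrix argument applied to rows (with $m_im_j^*$ in place of $m_i^*m_j$) yields a bounded map $\widetilde{T^*}_r:\Lone(\mathcal M;K^{*r})\to\Lone(\mathcal M;H^{*r})$ of norm $\|T^*\|=\|T\|$ associated with the adjoint $T^*:K\to H$. Its Banach space adjoint, under the identification $\Linfty(\mathcal M;\Hc)=\Lone(\mathcal M;H^{*r})^*$, is then a weak-$*$ continuous operator $\Linfty(\mathcal M;\Hc)\to\Linfty(\mathcal M;\Kc)$ of the same norm, and a short computation against elementary tensors using the duality bracket \eqref{eq:dualityBracket} shows it sends $m\otimes h$ to $m\otimes T(h)$, as desired. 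Uniqueness in both cases follows from the relevant density. The main technical hurdle is really the transition from the scalar matrix inequality $A\leq\|T\|^2G$ to its operator-valued counterpart in $\Lphalf(\mathcal M)$, but once the factorization $\|T\|^2G-A=B^*B$ is written down the estimate becomes routine.
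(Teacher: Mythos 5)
Your proposal is correct, but there is nothing in the paper to compare it with: the corollary is not proved there, it is recalled from \cite[Lemma 2.4]{JLX06}, so what you have written is a self-contained proof of the cited fact. The core of your argument is sound: the Gram-matrix domination $0\le (\la T(h_i),T(h_j)\ra)_{i,j}\le \|T\|^2(\la h_i,h_j\ra)_{i,j}$ in $M_n$, transferred to the order of $\Lphalf(\mathcal M)^+$ through the factorization $\|T\|^2G-A=B^*B$ and the identity $\sum_{i,j}(B^*B)_{ij}m_i^*m_j=\sum_k(\sum_jB_{kj}m_j)^*(\sum_jB_{kj}m_j)$, combined with monotonicity of the quasi-norm on the positive cone, gives exactly $\|\widetilde T(f)\|\le\|T\|\,\|f\|$ on $\Lp(\mathcal M)\otimes H$; note that this estimate also settles the (unstated) well-definedness of $\widetilde T$ there, since any representation of $0$ is sent to an element of norm $0$. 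The $p=\infty$ case via the Banach adjoint of the row map associated with $T^*:K\to H$ is also fine: with the paper's bracket $\la m\otimes\overline{k},m'\otimes k'\ra_{r,c}=\tau(mm')\la k,k'\ra$ one has $\la m\otimes\overline{T^*k},m'\otimes h\ra_{r,c}=\tau(mm')\la k,T(h)\ra$, so the adjoint acts as $m\otimes h\mapsto m\otimes T(h)$ on elementary tensors, and weak$^*$ density of $\Linfty(\mathcal M)\otimes H$ gives uniqueness and weak$^*$ continuity. Two small additions would make it complete: the reverse norm inequality at $p=\infty$ (or for the row map on $\Lone$) again needs the elementary-tensor test $\|m\otimes h\|=\|m\|_p\|h\|_H$, exactly as you did for $p<\infty$; and the row analogue of the matrix step uses the conjugate factorization $\sum_{i,j}(B^*B)_{ij}m_im_j^*=\sum_k(\sum_i\overline{B_{ki}}m_i)(\sum_i\overline{B_{ki}}m_i)^*$, which is worth writing out once. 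Compared with the usual proofs of the cited lemma (e.g. unitary dilation of a contraction and compression by $1\otimes p_{\mathds 1}$-type projections), your argument is equally short and has the advantage of working verbatim for all $0<p\le\infty$.
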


    Some obvious properties of these extension maps will be crucial in the following sections.

    \begin{lemma}\label{lem:extensionProperties}
        Let $H$ and $K$ be two Hilbert spaces, let $S$, $T$, and $(T_j)_{j=1}^{\infty}$ be some bounded operators from $H$ to $K$ and let $\widetilde{S}, \widetilde{T}, (\widetilde{T_j})_{j=1}^{\infty}$ be the corresponding extensions from $\Lp(\mathcal{M};H^c)$ to $\Lp(\mathcal{M};K^c)$. Then the following holds.
        \begin{enumerate}
            \item $\widetilde{ST} = \widetilde{S}\widetilde{T}$,
            \item If $S$ and $T$ commute, then $\widetilde{S}$ and $\widetilde{T}$ also commute,
            \item whenever $\sum_{j=1}^{\infty} T_j$ converges in the norm of $B(H)$, there holds
            \begin{align*}
                \widetilde{\sum_{j=1}^\infty T_j} = \sum_{j=1}^{\infty} \widetilde{T_j}.
            \end{align*}
            \item $({\widetilde{S}})^*= \widetilde{\overline S^*}$ for $p<\infty$.
        \end{enumerate}
    \end{lemma}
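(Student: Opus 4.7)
The plan is to verify each identity first on the algebraic tensor product $\Lp(\mathcal{M})\otimes H$, where every extension reduces to the elementary formula $\widetilde{T}(m\otimes h)=m\otimes Th$, and then propagate it to all of $\Lp(\mathcal{M};\Hc)$ by density for $p<\infty$ (or weak$^*$-density for $p=\infty$) together with the continuity guaranteed by Corollary~\ref{cor:extension}. In this way each statement collapses to an identity on $H$.

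For (1), a direct computation on $m\otimes h$ gives
\begin{align*}
\widetilde{S}\widetilde{T}(m\otimes h)=\widetilde{S}(m\otimes Th)=m\otimes (ST)h=\widetilde{ST}(m\otimes h),
\end{align*}
and both sides are bounded operators (weak$^*$-continuous for $p=\infty$), so they agree everywhere. Claim (2) follows from (1) applied to the commuting pair. For (3), Corollary~\ref{cor:extension} is isometric on operator norms, so $\sum_j\widetilde{T_j}$ converges in $B(\Lp(\mathcal{M};\Hc))$ whenever $\sum_j T_j$ converges in $B(H)$; both $\sum_j\widetilde{T_j}$ and $\widetilde{\sum_j T_j}$ act on $m\otimes h$ as $m\otimes(\sum_j T_j)h$, and density concludes.

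For (4), the duality pairing \eqref{eq:dualityBracket} makes $\widetilde{S}^{\ast}$ a map from $\Lpprime(\mathcal{M};K^{*r})$ to $\Lpprime(\mathcal{M};H^{*r})$. Using $\langle k,Sh\rangle_K=\langle S^{\ast} k,h\rangle_H$ one computes on elementary tensors
\begin{align*}
\langle m'\otimes\overline{k},\,\widetilde{S}(m\otimes h)\rangle_{r,c}=\tau(m'm)\langle S^{\ast} k,h\rangle=\langle m'\otimes\overline{S^{\ast} k},\,m\otimes h\rangle_{r,c},
\end{align*}
so that $\widetilde{S}^{\ast}(m'\otimes\overline{k})=m'\otimes\overline{S^{\ast} k}$. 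Letting $\overline{S}^{\ast}:K^{\ast}\to H^{\ast}$ denote the bounded map $\overline{k}\mapsto\overline{S^{\ast} k}$, its row-type extension $\widetilde{\overline{S}^{\ast}}$ agrees with $\widetilde{S}^{\ast}$ on $\Lpprime(\mathcal{M})\otimes K^{\ast}$ and hence everywhere, giving (4). The whole argument is essentially bookkeeping; the only point requiring genuine care is in (4), where one must keep the Hilbert-space adjoint $S^{\ast}:K\to H$ separate from the Banach-space adjoint $\overline{S}^{\ast}:K^{\ast}\to H^{\ast}$ on conjugate spaces, and verify that Corollary~\ref{cor:extension} applies verbatim to row Hilbert-valued spaces.
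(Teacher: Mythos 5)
Your proof is correct and is exactly the argument the paper leaves implicit: the lemma is stated without proof as an ``obvious'' consequence of Corollary~\ref{cor:extension}, and verifying each identity on elementary tensors $m\otimes h$ and then invoking density (norm density for $p<\infty$, weak$^*$ density plus weak$^*$ continuity for $p=\infty$) is the intended route, including your careful separation in (4) of the Hilbert-space adjoint $S^*:K\to H$ from the Banach-space adjoint acting on the conjugate space. The only point worth making explicit is that in (4) with $p=1$ one has $p'=\infty$, so the final identification must use weak$^*$ density of $\Linfty(\mathcal M)\otimes K^*$ together with the automatic weak$^*$ continuity of the Banach adjoint $(\widetilde S)^*$ and of the row extension given by the $p=\infty$ case of Corollary~\ref{cor:extension}; this is covered by the density scheme you set up at the start.
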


 In the last statement, one needs a conjugation because the $^*$ on the left is for the Banach space adjoint whereas the right one is the Hilbert space sense. Of course similarly statements hold for row spaces considering bounded maps $T: H^*\to  K^*$.
    \subsection{Noncommutative spaces $\Lp(\mathcal{M};L^c_2(\Omega))$}

    Let $(\Omega,\mu)$ be a $\sigma$-finite measure space. A remarkable setting for noncommutative Hilbert-valued column/row $\Lp$-spaces is the case $H = \Ltwo(\Omega):= \Ltwo(\Omega,\mu)$. Notice that under these conditions, the duality bracket \eqref{eq:dualityBracket} is given by the expression
    \begin{align*}
        \la m_1 \otimes \overline f_1 ,m_2 \otimes f_2\ra_{r,c} &= \tau_{\mathcal{M}}(m_1 m_2) \ \int_{\Omega} \overline{f_1} f_2 \ d\mu.
    \end{align*}
    In particular, identifying $\Ltwo(\Omega,\mu)^*$ and $\Ltwo(\Omega,\mu)$ and using the bilinear pairing $(f,g)\mapsto \int_\Omega fg \ d\mu$, then
    \begin{align*}
        \Lpprime(\mathcal{M};\Ltwor(\Omega,\mu)) = \Lp(\mathcal{M};\Ltwoc(\Omega,\mu))^* \ \mathrm{for \ } 1 \leq p < \infty.
    \end{align*}
    This duality identity will allow us to drop the conjugation or the involution $*$ from now on. Moreover, for
    $F=\sum_{i=1}^n m_i\otimes f_i \in \Lp(\mathcal M)\otimes
    L_2(\Omega)$ with $p<\infty$:
    $$\|F\|^p_{ \Lp(\mathcal{M};\Ltwoc(\Omega,\mu))} = \tau \Big(\int_\Omega \Big|\sum_{i=1}^n f_i(t) m_i\Big|^2 d\mu \Big)^{p/2}$$
    Since $\sum_{i=1}^n f_i(t) m_i$ can be interpreted as $F(t)$, it is very tempting to consider elements $\Lp(\mathcal{M};\Ltwoc(\Omega))$ as functions. Indeed for $p\leq 2$ (see \cite[Proposition 2.5]{JLX06}):

    \begin{proposition}\label{functions}Let $1\leq p \leq 2$, the identity on $\Lp(\mathcal M)\otimes L_2(\Omega)$
      extends to an injective contraction $\Lp(\mathcal{M};L_2^c(\Omega))\to
      \Ltwo(\Omega;\Lp(\mathcal{M}))$.
      \end{proposition}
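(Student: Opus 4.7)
The plan is to begin on the algebraic tensor product $\Lp(\mathcal{M}) \otimes \Ltwo(\Omega)$, which is dense in $\Lp(\mathcal{M}; \Ltwoc(\Omega))$ for $p<\infty$. For an elementary tensor $F = \sum_{i} m_i \otimes f_i$, the pointwise assignment $F(t) := \sum_i f_i(t)\, m_i$ unambiguously defines a function in $\Ltwo(\Omega; \Lp(\mathcal{M}))$. Setting $g(t) := F(t)^* F(t)$, a positive $\Lphalf(\mathcal{M})$-valued function, the definition of the column norm gives
\[
\|F\|_{\Lp(\mathcal{M}; \Ltwoc(\Omega))}^{2} \;=\; \Bigl\|\int_\Omega g(t)\, d\mu(t)\Bigr\|_{\Lphalf(\mathcal{M})},
\]
while directly from the Bochner definition
\[
\|F\|_{\Ltwo(\Omega; \Lp(\mathcal{M}))}^{2} \;=\; \int_\Omega \|g(t)\|_{\Lphalf(\mathcal{M})}\, d\mu(t).
\]
Comparing the two expressions reduces the desired contraction to an integral reverse Minkowski inequality.

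The key step is thus to establish that for $q \in (0,1]$ and any positive integrable $g : \Omega \to \Lq(\mathcal{M})_+$,
\[
\int_\Omega \|g(t)\|_{\Lq(\mathcal{M})}\, d\mu(t) \;\leq\; \Bigl\|\int_\Omega g(t)\, d\mu(t)\Bigr\|_{\Lq(\mathcal{M})}.
\]
This expresses the superadditivity of $\|\cdot\|_{\Lq}$ on the positive cone; it can be obtained by approximation by Riemann sums together with the finite-sum version, itself a consequence of the operator concavity of $x \mapsto x^q$ for $q \in (0,1]$. Applying this with $q = p/2 \in [1/2,1]$ yields contractivity on the dense subspace, which extends by density to a contraction $\widetilde{J} : \Lp(\mathcal{M}; \Ltwoc(\Omega)) \longrightarrow \Ltwo(\Omega; \Lp(\mathcal{M}))$.

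The remaining, and most delicate, step is injectivity, since $\widetilde{J}(F)$ is defined abstractly as an $\Ltwo(\Omega; \Lp(\mathcal{M}))$-limit and could a priori vanish for nonzero $F$. I would argue via duality: for $G = n \otimes h$ with $n \in \Lpprime(\mathcal{M})$ and $h \in \Ltwo(\Omega)$, the bracket \eqref{eq:dualityBracket} reads, on an elementary approximant $F_n \to F$ in $\Lp(\mathcal{M}; \Ltwoc(\Omega))$, $\la F_n, G \ra_{r,c} = \int_\Omega h(t)\, \tau(F_n(t)\, n)\, d\mu(t)$, and by H\"older this expression is continuous in $F_n$ with respect to the $\Ltwo(\Omega; \Lp(\mathcal{M}))$-norm. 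Passing to the limit gives $\la F, G \ra_{r,c} = \int_\Omega h(t)\, \tau(\widetilde{J}(F)(t)\, n)\, d\mu(t)$, which vanishes whenever $\widetilde{J}(F) = 0$. Such tensors being total in $\Lpprime(\mathcal{M}; \Ltwor(\Omega)) = \Lp(\mathcal{M}; \Ltwoc(\Omega))^*$ (norm-dense for $p>1$, weak$^*$-dense for $p=1$), we conclude $F=0$. The main obstacle in the whole proof is the noncommutative reverse Minkowski inequality at the level of $\Lphalf$; the rest is routine density and duality bookkeeping.
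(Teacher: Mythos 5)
Your reduction is set up correctly: the identities $\|F\|^2_{\Lp(\mathcal M;\Ltwoc(\Omega))}=\big\|\int_\Omega g\,d\mu\big\|_{\Lphalf(\mathcal M)}$ and $\|F\|^2_{\Ltwo(\Omega;\Lp(\mathcal M))}=\int_\Omega\|g(t)\|_{\Lphalf(\mathcal M)}\,d\mu$ hold for elementary tensors, and your injectivity argument (testing against $n\otimes h$ with $n\in\Lpprime(\mathcal M)$, $h\in\Ltwo(\Omega)$, norm-dense in the dual for $p>1$ and weak$^*$-dense for $p=1$) is essentially the paper's. The divergence, and the gap, is in the contraction step. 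The superadditivity $\int_\Omega\|g(t)\|_{\Lphalf}\,d\mu\le\big\|\int_\Omega g\,d\mu\big\|_{\Lphalf}$ on the positive cone, with $q=p/2\in[1/2,1]$, does \emph{not} follow from operator (or trace) concavity of $x\mapsto x^q$: the functional $a\mapsto\tau(a^q)$ is $q$-homogeneous, so its concavity only gives $\|a+b\|_q\ge 2^{1-1/q}\,(\|a\|_q+\|b\|_q)$, a constant strictly below $1$ for $q<1$, and for an $n$-term Riemann sum the same argument yields the even worse constant $n^{1-1/q}$. What you actually need is concavity of the $1$-homogeneous functional $a\mapsto\tau(a^q)^{1/q}$, i.e.\ a genuine noncommutative reverse Minkowski inequality on positives. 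That inequality is true, but it requires a real proof — for instance via the Fack--Kosaki submajorization $\int_0^s\mu_t(a+b)\,dt\le\int_0^s\big(\mu_t(a)+\mu_t(b)\big)\,dt$ combined with the scalar reverse Minkowski inequality (plus an approximation to handle elements of infinite trace), or via a duality argument as below. As written, the pivotal lemma of your proof is unproved.

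For comparison, the paper avoids this issue entirely: it factorizes $F=ab$ through the polar decomposition of $|F|\in\Lp(\mathcal M)$, with $a\in\Ltwo(\mathcal M;\Ltwoc(\Omega))$ and $b\in L_r(\mathcal M)$ where $1/p=1/2+1/r$, and then uses the pointwise H\"older estimate $\|F(t)\|_p\le\|a(t)\|_2\,\|b\|_r$ together with the trivial case $p=2$; no quasi-norm inequality is needed. If you prefer to keep your dual formulation, the cleanest repair in the range $1\le p\le2$ is in the same spirit: take $h\in\Ltwo(\Omega)$ of norm one proportional to $\|F(\cdot)\|_p$ and a measurable $\varepsilon$-optimal selection $y(t)$ with $\|y(t)\|_{p'}\le1$ and $\tau(F(t)y(t))$ close to $\|F(t)\|_p$; then $\big(\int\|F(t)\|_p^2\,d\mu\big)^{1/2}$ is approximated by the pairing of $F$ with the row element $G(t)=h(t)y(t)$, which is bounded by $\|F\|_{\Lp(\mathcal M;\Ltwoc(\Omega))}\,\|G\|_{\Lpprime(\mathcal M;\Ltwor(\Omega))}$, and $\|G\|^2_{\Lpprime(\mathcal M;\Ltwor(\Omega))}=\big\|\int h(t)^2\,y(t)y(t)^*\,d\mu\big\|_{L_{p'/2}}\le\int h(t)^2\,\|y(t)\|_{p'}^2\,d\mu\le1$ by the triangle inequality, legitimate since $p'/2\ge1$. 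So: correct skeleton and correct target, but either cite/prove the superadditivity of $\|\cdot\|_{\Lphalf}$ on positives or switch to the factorization/duality argument; the concavity claim as stated would not close the proof.
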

      \begin{proof}
        First note that by definition
        $$\Ltwo(\mathcal M)\otimes_2 L_2(\Omega)=\Ltwo(\mathcal{M};L_2^c(\Omega))=\Ltwo(\Omega;\Ltwo(\mathcal{M})).$$
        Let $p<2$ and set $q$ be so that
        $\frac 1 p=\frac 1 2 +\frac 1 q$.  Take
        $F=\sum_{i=1}^n m_i\otimes f_i \in \Lp(\mathcal M)\otimes
        L_2(\Omega)\subset \Lp(\mathcal{M};L_2^c(\Omega))$, it
        corresponds to an element of
        $\Lp(\mathcal M \overline\otimes B(\Ltwo(\Omega)))$. Its
        modulus $|F|^2= \int_\Omega |F(t)|^2d\mu$ falls into $L_{p/2}(\mathcal M)$, thus
        using its polar decomposition it can be written as $F=ab$ with
        $b\in \Lq(\mathcal M)$ and
        $a\in \Ltwo(\mathcal{M};L_2^c(\Omega))$ with
        $\|a\|_2 \cdot \|b\|_q=\|F\|_p$. Actually $a$ is also a simple function  and
$$\int_\Omega \|F(t)\|_p^2 d\mu \leq \|b\|_q^2 \int_\Omega \|a(t)\|_2^2 d\mu=\|F\|_p^2.$$
This shows that the identity is indeed a contraction and thus extends to a contraction $\iota$ by density.

To show the injectivity, consider $m\in \Lq(\mathcal M)$ and
$h\in L_2(\Omega)$.  The linear form associated to
$m\otimes  h$ satisfies
$\la F, m\otimes  h\ra_{c,r}=\int_\Omega \tau(\iota(F)(t)m)
 h(t) d\mu$. Indeed, this is clear for
$F\in \Lp(\mathcal M)\otimes L_2(\Omega)$, and for all $F$ by density.
Since $\Lq(\mathcal M)\otimes L_2(\Omega)$ is norm-dense in $\Lq(\mathcal M;  L_2^r(\Omega))$, we can conclude. 
           \end{proof}

As a consequence, we can identify elements in $\Lp(\mathcal M;  \Ltwoc(\Omega))$ with a.e. Bochner measurable functions from $\Omega$ to $\Lp(\mathcal M)$ when $1\leq p\leq 2$. This will be convenient for some identifications. Unfortunately, when $p>2$ we have no way to consider elements in $\Lp(\mathcal{M};\Ltwoc(\Omega))$ as  functions. Indeed, in the following sections, the case $p=\infty$ will be specially relevant. For that reason, the  extension of some useful operators on $\Ltwo(\Omega)$ to $\Lp(\mathcal{M};\Ltwoc(\Omega))$ will be carefully studied.
    

    \begin{lemma}\label{lem:extensionMapsL2}
        Let $A,B$ be two measurable sets, and let $w$ and $w^\prime$ be strictly positive functions belonging to $\Linfty(\Omega,\mu)$. Consider the following operators on $\Ltwo(\Omega,\mu)$:
        \begin{align*}
            T_{w} &: f \mapsto w^{1/2} f, \\
            P_{A}&: f \mapsto \bigchi_{A} f.
        \end{align*}
        These maps extend to bounded operators on $\Lp(\mathcal{M};L^t_2(\Omega,\mu))$, for $t=c,r$ and any $1\leq p\leq \infty$ such that $\|\widetilde{T}_w\| = \|w\|_{\Linfty(\Omega,\mu)}$, $\|\widetilde{P}_A\| = \|\bigchi_{A}\|_{\Linfty(\Omega,\mu)}$. Moreover, they satisfy the following relations:
        \begin{enumerate}
            \item $\widetilde{T}_{w} \widetilde{T}_{w^\prime} = \widetilde{T}_{w^\prime} \widetilde{T}_{w}$,
            \item $\widetilde{P}_A \widetilde{T}_w = \widetilde{T}_w \widetilde{P}_A$,
            \item whenever $w^{-1}$ is bounded, $\widetilde{T}_w \widetilde{T}_{w^{-1}} = \id = \widetilde{T}_{w^{-1}} \widetilde{T}_w$.
            \item $\widetilde{P}_A = \widetilde{P}_A \widetilde{P}_B = \widetilde{P}_B \widetilde{P}_A$ whenever $A \subseteq B$,
            \item $\widetilde{P}_{B \setminus A} = \widetilde{P}_B - \widetilde{P}_A$ whenever $A \subseteq B$.
        \end{enumerate}
    \end{lemma}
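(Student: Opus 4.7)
The strategy is to reduce everything to Corollary~\ref{cor:extension} and Lemma~\ref{lem:extensionProperties}. First I would observe that both $T_w$ and $P_A$ are bounded linear maps on the single Hilbert space $H = L_2(\Omega,\mu)$: $T_w$ is pointwise multiplication by $w^{1/2} \in L_\infty(\Omega,\mu)$, with norm $\|w^{1/2}\|_\infty$, and $P_A$ is the orthogonal projection onto $L_2(A,\mu)$, with norm $\|\chi_A\|_\infty \in \{0,1\}$. Applying Corollary~\ref{cor:extension} with $H=K=L_2(\Omega,\mu)$ immediately produces continuous extensions $\widetilde T_w$ and $\widetilde P_A$ on $L_p(\mathcal{M}; L_2^c(\Omega,\mu))$ for $1\leq p<\infty$ (weak$^*$-continuous for $p=\infty$) with operator norms equal to those of the underlying maps on $L_2(\Omega,\mu)$. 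The row versions are obtained by the same corollary applied in the row setting, or equivalently by passing through Lemma~\ref{lem:extensionProperties}(4) and using the selfadjointness of $T_w$ and $P_A$.

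Next I would verify the five algebraic identities at the level of $B(L_2(\Omega,\mu))$, where they are transparent consequences of the commutativity of pointwise multiplication by bounded measurable functions: $T_w T_{w'} = T_{ww'} = T_{w'} T_w$; $P_A T_w = T_w P_A$ because $\chi_A w^{1/2} = w^{1/2} \chi_A$ pointwise; $T_w T_{w^{-1}} = T_{w^{1/2} w^{-1/2}} = \id$ whenever $w^{-1} \in L_\infty$; $P_A P_B = P_B P_A = P_A$ when $A \subseteq B$; and $P_{B\setminus A} = P_B - P_A$ when $A \subseteq B$.

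Finally I would transfer each identity from $B(L_2(\Omega,\mu))$ to the extensions. The first four follow at once from Lemma~\ref{lem:extensionProperties}(1)--(2), which guarantees that the extension map respects compositions and commutators. The additive identity (5) follows from the linearity of the extension, which is a consequence of the uniqueness clause in Corollary~\ref{cor:extension}: both $\widetilde{S+T}$ and $\widetilde S + \widetilde T$ agree with $\id_\mathcal{M} \otimes (S+T)$ on the dense (resp.\ weak$^*$-dense) subspace $L_p(\mathcal{M}) \otimes L_2(\Omega,\mu)$, so they must coincide.

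There is no genuine obstacle here; the lemma is a routine ``transport'' of scalar pointwise identities through the column/row machinery, packaged for later use. The only point requiring minor care is the case $p=\infty$, where elements of $L_p(\mathcal{M}; L_2^c(\Omega,\mu))$ cannot in general be realized as Bochner functions on $\Omega$—so the operators $\widetilde T_w, \widetilde P_A$ must be understood abstractly rather than as ``pointwise'' multipliers. Since the entire argument above takes place first in $B(L_2(\Omega,\mu))$ and is then lifted by the extension principle, this ambiguity never enters the verification.
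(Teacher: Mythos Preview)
Your proposal is correct and follows essentially the same route as the paper: apply Corollary~\ref{cor:extension} to get the extensions with the right norms, verify identities (1)--(5) at the level of $B(L_2(\Omega,\mu))$ as pointwise multiplication facts, and then transfer them via Lemma~\ref{lem:extensionProperties} (with linearity of $T\mapsto \widetilde T$ handling (5)). Your write-up is slightly more detailed than the paper's, but the argument is the same.
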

    \begin{proof}
        By Corollary~\ref{cor:extension}, the extension operators $\widetilde{T}_w$, $\widetilde{P}_A$ are bounded as long as the original ones are bounded on $L_2(\Omega,\mu)$. The maps $T_w$ and $P_A$ are bounded with norm $\|w^{1/2}\|_{\Linfty}$ and $1$ respectively, since they are pointwise multiplication operators. Claims (1)-(4) follow from Lemma~\ref{lem:extensionProperties}, while linearity of the map $T \mapsto \widetilde{T}$ implies (5).
    \end{proof}

    \section{Duality between Hardy spaces and  $\BMO$-spaces}

    Consider the measure space $(\mathbb{R}, \frac{dt}{1+t^2})$. Set $\omega(t) = 1+t^2$. Then, $\Ltwo(\mathbb{R},\frac{dt}{1+t^2})$ is a Hilbert space with the inner product
   $$        \langle g, f \rangle_{1/\omega} := \int_{\mathbb{R}} \overline{g(t)} \ f(t) \ \frac{dt}{1+t^2}.$$
    We will consider the associated column space $\Lp(\mathcal{M};\Ltwoc(\mathbb{R},\frac{dt}{1+t^2}))$ for $0 < p \leq \infty$. We will choose $\mathds{1}$ to be the constant function $1/\sqrt{\pi}$, which satisfies the condition $\| \mathds{1}\|_{\Ltwo(\mathbb{R},\frac{dt}{1+t^2})} = 1$. For the sake of exposition, we define some operators on $\Linfty(\mathcal{M};\Ltwoc(\mathbb{R},\frac{dt}{1+t^2}))$ which can be described in terms of the maps appearing in Lemma~\ref{lem:extensionMapsL2}. Let $A$ be a measurable set with nonzero measure and define the map
   $$        R_A = \frac 1{|A|}\Tw  \widetilde{P}_A$$
    so that $R_A$ is the extension of the operator acting on $\Ltwo(\mathbb{R},\frac{dt}{1+t^2})$ as follows:
  $$
        f \mapsto \frac{(1+t^2)^{1/2}}{\sqrt{|A|}} \bigchi_A f.$$
    On the other hand, denote by $a_A$ the extension to $\Linfty(\mathcal{M};\Ltwoc(\mathbb{R},\frac{dt}{1+t^2}))$ of the map
   $$        a_A : f \mapsto f_A \ 1 = \Big(\frac{1}{|A|} \int_A f \Big) \ 1.$$
    Similarly the linear form $f\mapsto f_A$, has a weak$^*$ extension
    from $\Linfty(\mathcal{M};\Ltwoc(\mathbb{R},\frac{dt}{1+t^2}))$ to
    $\mathcal M= L_\infty(\mathcal M; \mathbb C^c)$. Thus we may also use the notation $f_A$ for every $f\in \Linfty(\mathcal{M};\Ltwoc(\mathbb{R},\frac{dt}{1+t^2}))$. 

    Some other Hilbert spaces over the real line will be considered through this work. Since the distinguished vectors are not relevant, we will always write them as $\mathds{1}$ in all spaces.

    \begin{lemma}\label{lem:pre-adjoint}
        Let $A$ be a measurable set. Then, the pre-adjoint maps for $R_A$ and $a_A$ on $\Lone(\mathcal{M};\Ltwor(\mathbb{R},dt/(1+t^2)))$ act as follows on  any operator $m \otimes  f \in \Lone(\mathcal{M}) \otimes \Ltwo(\mathbb{R},dt/(1+t^2))$
        \begin{align*}
            (R_A)_* &: m \otimes f \longmapsto m \otimes \frac{\sqrt{1+t^2}}{\sqrt{|A|}} \bigchi_{I}f \\
            (a_A)_* &: m \otimes f \longmapsto m \otimes  \langle 1, f \rangle_{1/\omega} \frac{1+t^2}{|A|}\bigchi_A.
        \end{align*}
        Moreover, the operator
        $$
            \begin{array}{cccc}
                V &: \Linfty(\mathcal{M};\Ltwor(\mathbb{R},(1+t^2)dt)) & \longmapsto & \Linfty(\mathcal{M};\Ltwor(\mathbb{R},\frac{dt}{1+t^2})) \\
                & m \otimes  f &\longmapsto & m \otimes  (1+t^2)f,
            \end{array}
        $$
        is an isometry and admits a pre-adjoint
        $$
            \begin{array}{cccc}
                V_* &: \Lone(\mathcal{M};\Ltwoc(\mathbb{R},\frac{dt}{1+t^2})) & \longmapsto & \Lone(\mathcal{M};\Ltwoc(\mathbb{R},(1+t^2)dt)) \\
                & m \otimes f  &\longmapsto & m \otimes \frac{f}{1+t^2}.
            \end{array}
        $$
      \end{lemma}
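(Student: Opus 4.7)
The plan is to deduce all four statements from the extension machinery of Corollary~\ref{cor:extension} and Lemma~\ref{lem:extensionProperties}, together with direct computations of the Hilbert-space adjoints of the scalar versions of $R_A$, $a_A$ and $V$. In each case the operator in question is weak-$*$ continuous on the ambient $L_\infty$-space because its scalar avatar is a bounded Hilbert-space map, so it automatically admits a pre-adjoint on the predual; by Lemma~\ref{lem:extensionProperties}(4) that pre-adjoint is the extension of the corresponding Hilbert-space adjoint (the conjugation is trivial since the underlying multipliers are real).

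For $R_A$, the scalar map is multiplication by the real bounded function $(1+t^2)^{1/2}\chi_A/\sqrt{|A|}$ on $L_2(\mathbb{R}, dt/(1+t^2))$; as multiplication by a real function is self-adjoint, extending to the predual gives the first displayed formula on simple tensors. For $a_A$, I would recast it as a rank-one operator using the inner product $\langle\cdot,\cdot\rangle_{1/\omega}$, namely
\[
 a_A(f) = \frac{1}{|A|}\,\langle (1+t^2)\chi_A,\,f\rangle_{1/\omega}\cdot 1,
\]
whose Hilbert-space adjoint is $g\mapsto \frac{1}{|A|}\,\langle 1,g\rangle_{1/\omega}\,(1+t^2)\chi_A$, yielding the second formula upon extension to $\Lone(\mathcal{M};\Ltwor(\mathbb{R},dt/(1+t^2)))$.

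For $V$, I would first establish contractivity at the scalar level: the pointwise bound $(1+t^2)^{-1}\leq (1+t^2)$ gives $\|f\|_{\Ltwo(\mathbb{R},dt/(1+t^2))} \leq \|f\|_{\Ltwo(\mathbb{R},(1+t^2)dt)}$, so the identity is a contraction between these two Hilbert spaces. Corollary~\ref{cor:extension} then extends it to a weak-$*$ continuous contraction at the level of row Hilbert-valued $L_\infty$-spaces, automatically producing a pre-adjoint $V_*$ between the corresponding $L_1$-column preduals. To identify $V_*$ on a simple tensor $m\otimes f$, I would test the defining identity
\[
 \la V_*(m\otimes f),\, m'\otimes g\ra_{(1+t^2)dt} = \la m\otimes f,\, V(m'\otimes g)\ra_{dt/(1+t^2)}
\]
against $m'\otimes g$ from $\Lone(\mathcal M)\otimes \Ltwo(\mathbb{R},(1+t^2)dt)$, which is norm-dense in the target.

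The main obstacle is the bookkeeping in the $V$ case: the two pairings in the defining identity are taken with respect to two different reference measures, and one has to track the resulting change of density between $(1+t^2)dt$ and $dt/(1+t^2)$ carefully in order to recover the stated multiplier. Once that is done, everything reduces to routine identifications on tensors of the form $m\otimes f$.
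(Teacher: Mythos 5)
Your overall strategy is the same as the paper's: the paper's proof of this lemma is precisely the one-liner that everything is the corresponding Hilbert-space computation transported by Corollary~\ref{cor:extension} and Lemma~\ref{lem:extensionProperties}, conjugations being harmless since all the scalar maps involved are real. Your treatment of $R_A$ (a real multiplier on $\Ltwo(\R,\frac{dt}{1+t^2})$ is self-adjoint) and of $a_A$ (rewritten as the rank-one map $f\mapsto \frac1{|A|}\la (1+t^2)\bigchi_A,f\ra_{1/\omega}\,1$, whose adjoint is $g\mapsto \frac1{|A|}\la 1,g\ra_{1/\omega}(1+t^2)\bigchi_A$) is exactly this and is correct.

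The one step that does not close as you set it up is the identification of $V_*$. If you test $\la V_*(m\otimes f), m'\otimes g\ra$ with the $(1+t^2)dt$-bracket on the left and the $\frac{dt}{1+t^2}$-bracket on the right, each bracket carrying the weight of its own space as prescribed by \eqref{eq:dualityBracket}, you are forced to solve $\int g\,\phi\,(1+t^2)\,dt=\int g f\,\frac{dt}{1+t^2}$, which gives $\phi=f/(1+t^2)^2$, not the stated $f/(1+t^2)$. The stated $V_*$ is the extension of the Hilbert-space \emph{isometry} $f\mapsto f/(1+t^2)$ from $\Ltwo(\R,\frac{dt}{1+t^2})$ to $\Ltwo(\R,(1+t^2)dt)$, whose Hilbert-space adjoint is multiplication by $1+t^2$, not the identity inclusion you start from; the lemma's description of $V$ as ``the identity on simple tensors'' tacitly represents elements of the dual of the $(1+t^2)dt$-weighted column space by the unweighted pairing $\int fg\,dt$, and it is the displayed formula for $V_*$ (not the word ``identity'') that is actually used later, e.g.\ in Theorem~\ref{thm:duality1}, where the mean-zero property of $F_i$ fails if one takes $\phi=f/(1+t^2)^2$. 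So either start from the scalar map $f\mapsto f/(1+t^2)$ and define $V:=(V_*)^*$ (contractivity is then immediate since that scalar map is an isometry), or fix the function-representation convention for the $(1+t^2)dt$-row space before testing the duality identity; as written, the bookkeeping you flag as the main obstacle would output a multiplier off by one factor of $1+t^2$.
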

      \begin{proof}
    These are consequences of the same results on Hilbert spaces using Proposition \ref{lem:extensionProperties}. One just need to take care of conjugations which actually play no role here. 
    \end{proof}

    Now, we are ready to define the column and row $\BMO$-spaces.

    \begin{definition}
        Given a von Neumann algebra $\mathcal{M}$ with n.s.f. trace $\tau$, set the \emph{column $\BMO$-space}, denoted as $\BMOc(\mathbb{R},\mathcal{M})$, to be the subspace of operators $f$ in $\Linfty(\mathcal{M};L_2^c(\R,\frac{dt}{1+t^2}))$ satisfying
        \begin{align}\label{eq:BMOcnorm}
            \|f\|_{\BMOc} := \sup_{I \subseteq \mathbb{R}} \big\|R_I (\id- a_I) f \big\|_{\Linfty(\mathcal{M};L_2^c(\R;\frac{dt}{1+t^2}))} < \infty,
        \end{align}
        where the supremum is considered over finite intervals $I$ of $\mathbb{R}$. Likewise, the row $\BMO$-space, $\BMOr(\mathbb{R},\mathcal{M})$, is the subspace of elements in $\Linfty(\mathcal{M};\Ltwor(\mathbb{R},\frac{dt}{1+t^2}))$ for which the norm $\|f\|_{\BMOr} := \|f^*\|_{\BMOc}$ is finite.
    \end{definition}

    It is clear that $\|\cdot\|_{\BMOc}$ is a norm modulo
    $\mathcal{M}$. This expression will be convenient for abstract
    questions. We point out that it admits a much more tractable form.
    The multiplication by $\sqrt{1+t^2}$ is just an isometry from
    $\Ltwo(\R)$ to $\Ltwo(\R,\frac{dt}{1+t^2})$. For a finite interval $I$,
    let $\widetilde{\iota}_I$ denote the extension of the map
    $f\mapsto \bigchi_I f $ from $\Ltwo(\R, \frac{dt}{1+t^2})$ to $\Ltwo(\R)$, then clearly:
 \begin{align}\Big\|R_I (\id- a_I) f \Big\|_{\Linfty(\mathcal{M};L_2^c(\R,\frac{dt}{1+t^2}))} =\Big\|\frac 1 {\sqrt{|I|}}\big( \widetilde \iota_I\big (f -(f_I\otimes  1)\big)\big)\Big\|_{\Linfty(\mathcal{M};L_2^c(\R))}\label{eq:simple}\end{align}
In particular, for operators in $\mathcal{M} \otimes \Ltwo(\mathbb{R},\frac{dt}{1+t^2})$, we recover the expression which determines the definition for the $\BMOc$-norm introduced in \cite{M07}:
    \begin{lemma}
        For any operator $f$ in $\mathcal{M} \otimes \Ltwo(\mathbb{R},\frac{dt}{1+t^2})$ it holds
    $$
            \|f\|_{\BMOc} = \sup_{I \subseteq \mathbb{R}} \Bigg\| \Bigg( \frac{1}{|I|} \int_{I} |f - f_I|^2 \Bigg)^{\frac12} \Bigg\|_{\mathcal{M}}.
        $$\end{lemma}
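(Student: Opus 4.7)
The plan is to identify both sides as the column Hilbert-valued norm of the same concrete element, and then observe that for a finite tensor the column norm reduces to the pointwise integral formula on the right.

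First, I would invoke \eqref{eq:simple}, which reduces the $\BMOc$ expression to
\[
\|f\|_{\BMOc} = \sup_I \Big\|\tfrac{1}{\sqrt{|I|}}\,\widetilde{\iota}_I\big(f-f_I\otimes 1\big)\Big\|_{\Linfty(\mathcal{M};L_2^c(\R))}.
\]
Since $f\in\mathcal{M}\otimes\Ltwo(\R,\frac{dt}{1+t^2})$, write $f=\sum_{i=1}^n m_i\otimes f_i$ as a finite sum. As each $f_i\in\Ltwo(\R,\frac{dt}{1+t^2})$ is locally in $L_1$ on any finite interval, the average $f_I=\sum_i m_i\,(f_i)_I\in\mathcal M$ is well defined and
\[
\tfrac{1}{\sqrt{|I|}}\widetilde{\iota}_I\!\bigl(f-f_I\otimes 1\bigr)=\sum_{i=1}^n m_i\otimes\frac{\chi_I\bigl(f_i-(f_i)_I\bigr)}{\sqrt{|I|}},
\]
which is a finite tensor sitting in $\mathcal{M}\otimes\Ltwo(\R,dt)$.

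Next I would apply the explicit formula for the column norm of a finite tensor recalled right after Corollary~\ref{cor:extension}. With $g_i:=\chi_I(f_i-(f_i)_I)/\sqrt{|I|}\in\Ltwo(\R,dt)$, the inner products are $\langle g_i,g_j\rangle_{\Ltwo(\R,dt)}=\frac{1}{|I|}\int_I \overline{(f_i-(f_i)_I)}\,(f_j-(f_j)_I)\,dt$, so
\[
\Big\|\tfrac{1}{\sqrt{|I|}}\widetilde{\iota}_I(f-f_I\otimes 1)\Big\|_{\Linfty(\mathcal{M};L_2^c(\R))}^2=\Big\|\tfrac{1}{|I|}\sum_{i,j}\Big(\int_I\overline{(f_i-(f_i)_I)}(f_j-(f_j)_I)\,dt\Big)m_i^*m_j\Big\|_{\mathcal{M}}.
\]

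Finally I would recognize the operator inside the norm as $\frac{1}{|I|}\int_I|f-f_I|^2\,dt$: indeed, viewing $f$ concretely as the $\mathcal M$-valued function $t\mapsto\sum_i m_i f_i(t)$, one has $|f(t)-f_I|^2=\sum_{i,j}\overline{(f_i(t)-(f_i)_I)}(f_j(t)-(f_j)_I)\,m_i^*m_j$, and integration commutes with the finite sum. Taking square roots and then the supremum over finite intervals $I$ gives the claimed identity.

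The only delicate point is the measurability/integrability check that lets us identify $f$ with an honest $\mathcal M$-valued function and interpret $\int_I|f-f_I|^2$ pointwise; this is free here since $f$ is a finite tensor, placing us in the range of Proposition~\ref{functions} without needing any limiting argument. Everything else is bookkeeping through \eqref{eq:simple} and the definition of the column norm.
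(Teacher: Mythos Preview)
Your proposal is correct and follows exactly the approach the paper intends: the paper states this lemma without proof, treating it as an immediate consequence of \eqref{eq:simple} together with the explicit formula for the column norm on elementary tensors displayed just before Corollary~\ref{cor:extension}. Your write-up simply spells out those two ingredients; there is nothing further to add.
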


    Along the next section, the study of the boundedness of Calder\'on-Zygmund operators on operator-valued Hardy spaces will require a concrete formulation in terms of atomic decompositions. In order to justify introducing these spaces, we will check that the dual of this new description of the column (resp. row) Hardy space coincides with $\BMOr(\mathbb{R},\mathcal{M})$ (resp. $\BMOc(\mathbb{R},\mathcal{M})$).

    \begin{definition}\label{def:catom}
        Let $\mathcal{M}$ be a von Neumann algebra with n.s.f. trace. A \emph{$c$-atom} is a function $a \in \Lone(\Avnat)$ which admits a factorization of the form $a = b h$ for some function $b : \R \rightarrow \Ltwo(\mathcal{M})$ and an norm-one operator $h \in \Ltwo(\mathcal{M})$, satisfying
        \begin{enumerate}
            \item $\mathrm{supp}_{\mathbb{R}}(b) \subseteq I$ for some interval $I$,
            \item $\int_{I} b = 0$,
            \item $\|b\|_{\Ltwo(\R;\Ltwo(\mathcal{M}))} \leq \frac{1}{\sqrt{|I|}}$.
        \end{enumerate}
        Then, the \emph{column Hardy space} $\Hardyc$ is defined to be the subspace of elements in $\Lone(\Avnat)$ of the form
        \begin{align}
            \sum\limits_{i=0}^{\infty} \lambda_i a_i \ \mathrm{where \ } (\lambda_i)_i \in \ell_1 \ \mathrm{and} \ (a_i)_i \ c\mbox{-}\mathrm{atoms}
        \end{align}
        with respect to the norm
        \begin{align*}
            \|f\| = \inf \{ \sum_{i=0}^{\infty} |\lambda_i| \ : \ f = \sum_{i = 0}^{\infty} \lambda_i a_i \ \mathrm{in \ } \Lone(\Avnat), \ (\lambda_i)_i \in \ell_1, \ (a_i)_i \ c\mbox{-}\mathrm{atoms} \}.
         \end{align*}
    \end{definition}

    Under the above definition, any $c$-atom satisfies
    $$
    \|a\|_{\Lone(\R;\Lone(\mathcal{M}))} \leq 1
    $$
    since, by the Hölder inequality,
    \begin{align*}
        \|a\|_{\Lone(\Avnat)} \leq \|b\|_{\Ltwo(\R;\Ltwo(\mathcal{M}))} \ \|h  \bigchi_{I}\|_{\Ltwo(\R;\Ltwo(\mathcal{M}))} \leq |I|^{-1/2} \ |I|^{1/2} = 1.
    \end{align*}
    Therefore, $\Hardyc$ is contractively contained into $\Lone(\Avnat)$.
    
  We choose to give the explicit decomposition of atoms as $a=bh$ rather than just saying $a\in L_1(\mathcal M;\Ltwoc(\R))$ with norm less than $\frac 1 {\sqrt {|I|}}$ and mean zero. We do so as this makes explicit the connection with vector valued harmonic analysis and will make all the proofs transparent. We leave the following to the reader:

    \begin{proposition}\label{prop:H1cBanach}
        The column Hardy space $(\Hardyc,\|\cdot\|_{\Honec})$ is a Banach space.
    \end{proposition}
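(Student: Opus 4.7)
The plan is to verify that $\|\cdot\|_{\Honec}$ is genuinely a norm, and then to establish completeness via the absolute-convergence criterion.

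First I would check the norm axioms. Homogeneity is immediate from the definition. For the triangle inequality, given $f,g\in\Hardyc$ and $\varepsilon>0$, pick atomic decompositions $f=\sum_i\lambda_i a_i$ and $g=\sum_j\mu_j b_j$ (convergent in $\Lone(\Linfty(\R)\overline\otimes\mathcal{M})$) with $\sum_i|\lambda_i|\leq \|f\|_{\Honec}+\varepsilon$ and $\sum_j|\mu_j|\leq \|g\|_{\Honec}+\varepsilon$. Concatenating the two families yields an admissible decomposition of $f+g$, whence $\|f+g\|_{\Honec}\leq \|f\|_{\Honec}+\|g\|_{\Honec}+2\varepsilon$. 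For non-degeneracy, the paragraph preceding the proposition shows that every $c$-atom has $\Lone$-norm at most $1$, so any decomposition $f=\sum_i\lambda_i a_i$ satisfies $\|f\|_{\Lone}\leq\sum_i|\lambda_i|$; taking the infimum yields the contractive embedding $\Hardyc\hookrightarrow\Lone$, so $\|f\|_{\Honec}=0$ forces $f=0$.

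For completeness I would invoke the standard test: a normed space is Banach iff every absolutely summable series is summable. Let $(f_n)_{n\geq 1}\subset\Hardyc$ with $M=\sum_n\|f_n\|_{\Honec}<\infty$. For each $n$, fix an atomic decomposition $f_n=\sum_i\lambda_i^{(n)}a_i^{(n)}$ with $\sum_i|\lambda_i^{(n)}|\leq\|f_n\|_{\Honec}+2^{-n}$. Since each $c$-atom is an $\Lone$-contraction, the double-indexed family $(\lambda_i^{(n)}a_i^{(n)})_{n,i}$ is absolutely summable in $\Lone$, with total mass at most $M+1$. Its unconditional sum $g\in\Lone$ exists, and relabelling the pairs $(n,i)$ as a single index exhibits an admissible atomic decomposition of $g$; hence $g\in\Hardyc$ with $\|g\|_{\Honec}\leq M+1$.

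To identify $g$ as $\sum_n f_n$ in $\Hardyc$, I would exploit the unconditional $\Lone$-convergence to group the double series by $n$, which gives $\sum_{n=1}^N f_n\to g$ in $\Lone$. Equivalently $g-\sum_{n=1}^N f_n = \sum_{n>N} f_n$ in $\Lone$, and this tail still admits the atomic decomposition $\sum_{n>N}\sum_i\lambda_i^{(n)}a_i^{(n)}$, whose total coefficient mass is at most $\sum_{n>N}(\|f_n\|_{\Honec}+2^{-n})$, tending to $0$. So the convergence in fact takes place in $\Hardyc$, proving summability and hence completeness. The only mildly delicate step is the final identification: one must check that regrouping the absolutely summable double sum by the index $n$ really recovers each $f_n$ in $\Lone$, which is safe precisely because atoms are $\Lone$-contractions, making the original decompositions unconditionally convergent in $\Lone$. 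The rest is routine manipulation of atomic sums.
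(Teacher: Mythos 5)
Your argument is correct: the contractive embedding into $\Lone(\mathcal{A})$ (coming from $\|a\|_{\Lone}\leq 1$ for every $c$-atom) gives non-degeneracy, and the absolute-convergence criterion with near-optimal decompositions and the tail estimate $\|g-\sum_{n\leq N}f_n\|_{\Honec}\leq\sum_{n>N}(\|f_n\|_{\Honec}+2^{-n})$ is exactly the standard route. The paper explicitly leaves this proof to the reader, and your write-up supplies precisely the argument the authors intend, so there is nothing to compare beyond noting the match.
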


    Given a Banach space $\mathbb{X}$, let $\Ltwozero(\mathbb{R},(1+t^2) \ dt;\mathbb{X})$ denote the subspace of functions $f$ in $\Ltwo(\mathbb{R},(1+t^2) \ dt;\mathbb{X})$ satisfying
    \begin{align*}
        \int_{\mathbb{R}} f(t) \ dt = 0.
    \end{align*}
    Then, the classical argument by Meyer \cite[Chapter 5, Proposition 1]{Mey90} extends to the Banach-valued setting yielding the inclusion as a subspace of $\Ltwozero(\mathbb{R},(1+t^2) \ dt;\mathbb{X})$ into the vector-valued Hardy space $\Hone(\mathbb{R};\mathbb{X})$ \cite{H06}. More clearly, given $f \in \Ltwozero(\mathbb{R},(1+t^{2})dt;\Ltwo(\mathcal{M}))$, there exists a sequence of atoms $(b_i)_i \subseteq \Hone(\mathbb{R};\Ltwo(\mathcal{M}))$ and $(\lambda_i)_i \in \ell_1$ such that
    \begin{align}\label{eq:atommeyer}
        f = \sum_{i=0}^\infty \lambda_i b_i \ \mathrm{in} \ \Lone(\mathbb{R};\Ltwo(\mathcal{M})) \ \mathrm{and} \ \sum_{i=0}^\infty |\lambda_i| \lesssim \|f\|_{\Ltwozero(\mathbb{R},(1+t^2)dt;\Ltwo(\mathcal{M}))}.
    \end{align}
    Since any $c$-atom $a = bh$ is the product of an $\Ltwo$-atom $b$ in $\Hone(\mathbb{R};\Ltwo(\mathcal{M}))$ and an element $h$ in $\Ltwo(\mathcal{M})$, the argument by Meyer still works in the semicommutative case. 
    \begin{proposition}\label{prop:meyerMap}
      The formal identity map
      $$
      \Ltwozero(\mathbb{R},(1+t^2)dt)\otimes \Lone(\mathcal M) \to
      \Lone(\mathbb{R})\otimes\Lone(\mathcal M)$$ extends to an injective and 
      contractive map $Q: \Lone(\mathcal M; \Ltwoczero(\mathbb{R},(1+t^2)dt))\to \Hardyc$.    \end{proposition}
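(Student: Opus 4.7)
The plan is to combine a noncommutative H\"older factorisation with the vector-valued Meyer lemma recalled in \eqref{eq:atommeyer}. First I would fix $F$ in the algebraic tensor product $\Ltwozero(\R,(1+t^2)dt)\otimes \Lone(\mathcal M)$, norm-dense in $\Lone(\mathcal M;\Ltwoczero(\R,(1+t^2)dt))$ by the usual density of $\Lone(\mathcal M)\otimes H$ in column Hilbertian $\Lone$-spaces. Inside the ambient $\mathcal M\overline\otimes B(\Ltwo(\R,\tfrac{dt}{1+t^2}))$ I would take the polar decomposition $F=u|F|$, producing
\[
v\;:=\;u|F|^{1/2}\in \Ltwo(\mathcal M;\Ltwoc(\R,(1+t^2)dt)), \qquad w\;:=\;|F|^{1/2}\in \Ltwo(\mathcal M),
\]
with $F=vw$ and $\|v\|_2\|w\|_2=\|F\|_1$. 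Proposition~\ref{functions} at $p=2$ identifies $v$ with a Bochner measurable function valued in $\Ltwo(\mathcal M)$, which is exactly Meyer's framework.

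The key preparatory step is to guarantee $\int_\R v = 0$. Combining Cauchy--Schwarz against $(1+t^2)^{-1/2}$ with Proposition~\ref{functions} yields the continuous inclusion $\Lone(\mathcal M;\Ltwoc(\R,(1+t^2)dt))\hookrightarrow \Lone(\R;\Lone(\mathcal M))$, so the hypothesis $\int_\R F = 0$ translates into $(\int_\R v\,dt)\,w=0$ in $\Lone(\mathcal M)$. Writing $e\in \mathcal M$ for the support projection of the positive operator $w$, this forces $ce=0$ for $c:=\int_\R v\in \Ltwo(\mathcal M)$ (approximate $e$ by the spectral projections of $w$ on $[1/n,n]$, invert, and pass to the limit in $\Ltwo(\mathcal M)$). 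Replacing $v$ by $ve$ preserves $F$ (since $ew=w$), does not increase the $\Ltwo(\mathcal M;\Ltwoc)$-norm, and now $\int(ve)\,dt=ce=0$. So I may assume $v\in \Ltwozero(\R,(1+t^2)dt;\Ltwo(\mathcal M))$ with $\|v\|_2\|w\|_2\leq \|F\|_1$.

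Next I apply the vector-valued Meyer decomposition \eqref{eq:atommeyer} with values in the Hilbert space $\Ltwo(\mathcal M)$, obtaining $\Ltwo$-atoms $\beta_i$ (supported on intervals $I_i$, with zero mean and $\|\beta_i\|_{\Ltwo(\R;\Ltwo(\mathcal M))}\leq |I_i|^{-1/2}$) and scalars $\lambda_i\in \ell_1$ such that
\[ v=\sum_i\lambda_i\beta_i\quad\text{in }\Lone(\R;\Ltwo(\mathcal M)),\qquad \sum_i|\lambda_i|\lesssim \|v\|_{\Ltwozero(\R,(1+t^2)dt;\Ltwo(\mathcal M))}. \]
Assuming $w\neq 0$ and setting $h:=w/\|w\|_2\in \Ltwo(\mathcal M)$, each product $a_i:=\beta_i h$ fits Definition~\ref{def:catom} and is a genuine $c$-atom. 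Since right-multiplication by $w$ is bounded $\Ltwo(\mathcal M)\to \Lone(\mathcal M)$, the series converges absolutely in $\Lone(\mathcal A)$:
\[ F \;=\; vw \;=\; \sum_i (\lambda_i\|w\|_2)\,a_i, \]
so $\|F\|_{\Hardyc}\leq \sum_i|\lambda_i|\|w\|_2\lesssim \|v\|_2\|w\|_2\leq \|F\|_{\Lone(\mathcal M;\Ltwoc)}$. Running this on the dense tensor subspace and extending by continuity produces the bounded map $Q$; the precise contraction constant is recovered by tracking the normalisation in Meyer's scalar argument.

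Finally, injectivity is automatic: the composition of $Q$ with the contractive inclusion $\Hardyc\hookrightarrow \Lone(\mathcal A)$ coincides, by construction, with the injective embedding $\Lone(\mathcal M;\Ltwoczero(\R,(1+t^2)dt))\hookrightarrow \Lone(\R;\Lone(\mathcal M))=\Lone(\mathcal A)$ already used above. I expect the main technical obstacle to be precisely the zero-mean adjustment of $v$: without it the vector Meyer lemma does not apply, and the fix crucially exploits that the support projection of $|F|^{1/2}$ detects the cancellation $\int F=0$ through the polar decomposition.
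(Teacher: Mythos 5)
Your argument is correct and is essentially the proof in the paper: factor the element, via polar decomposition as in Proposition \ref{functions}, into a mean-zero part of $\Ltwo(\mathcal{M};\Ltwoczero(\mathbb{R},(1+t^2)dt))=\Ltwozero(\mathbb{R},(1+t^2)dt;\Ltwo(\mathcal{M}))$ times an operator in $\Ltwo(\mathcal{M})$, apply the vector-valued Meyer decomposition \eqref{eq:atommeyer} to produce the $c$-atoms, and conclude by density; your explicit support-projection verification of the mean-zero transfer and your injectivity argument (identifying $Q$ followed by the inclusion $\Hardyc\subset\Lone(\mathcal{A})$ with the injective map coming from Proposition \ref{functions}, instead of the paper's weak$^*$-density remark) are harmless variants of what the paper leaves implicit. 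Two small points: the polar decomposition should be taken in $\mathcal{M}\overline{\otimes}B(\Ltwo(\mathbb{R},(1+t^2)dt))$ rather than $B(\Ltwo(\mathbb{R},\frac{dt}{1+t^2}))$, and---exactly as in the paper---the map is ``contractive'' only up to the absolute constant hidden in \eqref{eq:atommeyer}.
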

    \begin{proof}
      First note that simple tensors
      $f\otimes m \in \Ltwozero(\mathbb{R},(1+t^2)dt)\otimes
      \Lone(\mathcal M)$ are both in
      $\Hardyc$ and in $\Lone(\mathbb{R})\otimes\Lone(\mathcal M)$. Indeed by
      Meyer's result in the scalar case, $f=\sum_i \lambda_i a_i$ where
      $a_i$ are scalar valued atoms. Choose
      $\alpha,\beta\in \Ltwo(\mathcal M)$ with $m=\alpha\beta$ and
      $\|\alpha\|_2=1$. Then $a_i\otimes\alpha$ are atoms in
      $\Hone(\mathbb{R};\Ltwo(\mathcal{M}))$, so $a_i\otimes m$ is a
      multiple of a $c$-atom and $f \otimes m$ is in $\Hardyc$.

      The end of the argument is as in Proposition \ref{functions}. Any
      simple
      $x=\sum_{i=1}^n f_i\otimes m_i\in
      \Ltwozero(\mathbb{R},(1+t^2)dt)\otimes \Lone(\mathcal M)$ can be
      written as $x=F \beta$ , with $\beta\in \Ltwo(\mathcal M)$ and a
      simple tensor
      $F\in \Ltwo(\mathcal M;\Ltwoczero(\mathbb{R},(1+t^2)dt))=
      \Ltwozero(\mathbb{R},(1+t^{2})dt;\Ltwo(\mathcal{M}))$ such that
      $ \|F\|_2 \cdot \|\beta\|_2=\|x\|_{\Ltwo(\mathcal
        M;\Ltwoczero(\mathbb{R},(1+t^2)dt))}$. Using Meyers'
      decomposition \eqref{eq:atommeyer} for $F$ gives the norm estimate. One
      concludes to the boundedness of the extension by density. The injectivity follows by the weak$^*$ density of simple tensors in the dual spaces as in Proposition~\ref{functions}. 
    \end{proof}

    Consequently any linear form on $\Hardyc$ induces another one on the space $\Lone(\mathcal M;\Ltwoczero(\mathbb{R},(1+t^2)dt))$. Precomposing it with the map $V_*$ of Lemma \ref{lem:pre-adjoint} (which obviously
    has dense range), this allows us to represent $\Hardyc^*$ as a subspace of 
    $\Lone(\mathcal M;\Ltwoczero(\mathbb{R},1/(1+t^2)dt))^*\subset
    \Linfty(\mathcal M;\Ltwor(\mathbb{R},1/(1+t^2)dt))$.

    \begin{theorem}\label{thm:duality1}
        Given a semifinite von Neumann algebra $\mathcal{M}$, we have a contractive inclusion
       $$
            \Hardyc^* \subseteq \BMOr(\mathbb{R},\mathcal{M}).$$
      \end{theorem}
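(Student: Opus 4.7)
My plan is to construct a concrete representative $g\in\Linfty(\mathcal{M};\Ltwor(\mathbb{R},\frac{dt}{1+t^2}))$ of $\ell$ via Hahn--Banach, verify that $\ell(a)=(\tau\otimes\int)(ag\,dt)$ on $c$-atoms, and then use the atomic decomposition of $\Honec$ to bound $\|g\|_{\BMOr}$. First I would observe that by Proposition~\ref{prop:meyerMap}, the composition $\ell\circ Q$ is a bounded linear form of norm at most $\|\ell\|$ on $\Lone(\mathcal{M};\Ltwoczero(\mathbb{R},(1+t^2)dt))$. The contractive map $V_*$ of Lemma~\ref{lem:pre-adjoint} carries the zero-mean subspace $\Lone(\mathcal{M};\Ltwoczero(\mathbb{R},\frac{dt}{1+t^2}))$ into $\Lone(\mathcal{M};\Ltwoczero(\mathbb{R},(1+t^2)dt))$, so $(\ell\circ Q)\circ V_*$ is a continuous form on the former of norm at most $\|\ell\|$. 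A Hahn--Banach extension to all of $\Lone(\mathcal{M};\Ltwoc(\mathbb{R},\frac{dt}{1+t^2}))$, combined with $\Lone^*=\Linfty$, then produces $g$ with $\|g\|_{\Linfty(\mathcal{M};\Ltwor)}\leq\|\ell\|$ (well-defined modulo constants in $\mathcal{M}$, which is harmless for $\BMOr$).

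Next I would verify that this $g$ represents $\ell$ on atoms. For a $c$-atom $a=bh$, its compact support places $a$ in $\Lone(\mathcal{M};\Ltwoczero(\mathbb{R},(1+t^2)dt))$, so $\ell(a)=(\ell\circ Q)(a)=(\ell\circ Q\circ V_*)(V_*^{-1}(a))$. Unwinding the definitions of $V_*$ and of the two weighted row--column duality brackets collapses this to the natural trace--integral pairing $\ell(a)=(\tau\otimes\int)(ag\,dt)$.

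To bound $\|g\|_{\BMOr}$, fix a finite interval $I$. By \eqref{eq:simple},
\begin{equation*}
\|R_I(\id-a_I)g^*\|_{\Linfty(\mathcal{M};\Ltwoc(\mathbb{R},\frac{dt}{1+t^2}))}=\Big\|\tfrac{\bigchi_I}{\sqrt{|I|}}(g^*-(g^*)_I)\Big\|_{\Linfty(\mathcal{M};\Ltwoc(\mathbb{R},dt))}.
\end{equation*}
By $\Lone$--$\Linfty$ duality at Lebesgue measure, together with the mean-subtraction identity $\int_I y(g^*-(g^*)_I)\,dt=\int_I(y-y_I)g^*\,dt$, this norm equals the supremum, over $y$ in the unit ball of $\Lone(\mathcal{M};\Ltwor(\mathbb{R},dt))$, of $|(\tau\otimes\int)(\tilde y\,g^*)|$, where $\tilde y=\bigchi_I(y-y_I)/\sqrt{|I|}$ has zero mean and support in $I$. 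Factorizing $y=hb$ with $h\in\Ltwo(\mathcal{M})$, $b\in\Ltwo(\mathbb{R};\Ltwo(\mathcal{M}))$ and $\|h\|_2\|b\|_2\leq 1$, one gets $\tilde y=h\tilde b$ with $\tilde b=\bigchi_I(b-b_I)/\sqrt{|I|}$ supported on $I$, of zero mean, and with $\|\tilde b\|_{\Ltwo(\mathbb{R};\Ltwo(\mathcal{M}))}\leq 1/\sqrt{|I|}$. Taking adjoints, $\tilde y^*=\tilde b^*h^*$ is then a $c$-atom of $\Honec$-norm at most $1$. Cyclicity of the trace and the representation established above give
\begin{equation*}
|(\tau\otimes\int)(\tilde y\,g^*)|=|(\tau\otimes\int)(g\,\tilde y^*)|=|\ell(\tilde y^*)|\leq\|\ell\|,
\end{equation*}
so $\|R_I(\id-a_I)g^*\|\leq\|\ell\|$ for every finite interval $I$, giving $\|g\|_{\BMOr}\leq\|\ell\|$.

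The main obstacle I anticipate lies in the representation step: rigorously verifying that the Hahn--Banach extension $g$ pairs with $c$-atoms exactly as $\ell$ does. This requires careful bookkeeping of the two weighted $L_2$-pairings (at $(1+t^2)dt$ and at $\frac{dt}{1+t^2}$), of the isometric action of $V_*$ on the corresponding zero-mean subspaces, and of the row/column/adjoint conventions used to connect the spaces in play; by contrast, once the representation is secured, the $\BMOr$-estimate is essentially a classical duality-plus-atom computation.
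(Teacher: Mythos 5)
Your proposal is correct and follows essentially the same route as the paper: represent the functional through $Q$ (Proposition~\ref{prop:meyerMap}) and $V_*$ (Lemma~\ref{lem:pre-adjoint}) as an element of $\Linfty(\mathcal{M};\Ltwor(\mathbb{R},\frac{dt}{1+t^2}))$ (the paper phrases this via the dual of the zero-mean subspace rather than an explicit Hahn--Banach extension, which is only determined modulo constants in either case), and then observe that the pre-adjoint of $R_I(\id-a_I)$ applied to a unit-ball element of the predual, after the factorization $y=hb$ of Proposition~\ref{functions}, produces exactly a $c$-atom, giving $\|g\|_{\BMOr}\leq\|\ell\|$. Your passage through $g^*$ and the Lebesgue-measure form \eqref{eq:simple} instead of staying in the row picture is a cosmetic difference, and the bookkeeping you flag (compatibility of the two weighted pairings and $V_*A_\omega=\id$ on compactly supported elements) is handled in the paper by the explicit formulas of Lemma~\ref{lem:pre-adjoint} applied to simple tensors.
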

    \begin{proof}
Let $g\in \Hardyc^*$, we already explained that 
$$VQ^*(g)\in  \Linfty(\mathcal M;\Ltwor(\mathbb{R},1/(1+t^2)dt)).$$ We estimate its $\BMOr$ norm, using that all operators commute with the involution
               \begin{align*}
            \|VQ^* g \|_{\BMOr} &= \sup_{I \subseteq \mathbb{R}} \|R_I (\id-a_I)(VQ^* g)\|_{\Linfty(\mathcal{M};\Ltwor(\mathbb{R},\frac{dt}{1+t^2}))} \\
            &= \sup_{I} \sup_{f} |\langle f, R_I(\id-a_I)(VQ^* g) \rangle |
        \end{align*}
        where the supremum is taken over $f$ in the unit ball of $ \Lone(\mathcal{M};\Ltwoc(\mathbb{R},\frac{dt}{1+t^2}))$. 
        We can as well take it on simple tensors by density (we could
        as well consider it as a function with values in
        $\Lone(\mathcal M)$).  Moreover by the factorization argument
        of Proposition \ref{functions}, we can even write it as
        $f=\sum_{i=1}^n m_ih\otimes f_i$ with
        $f_i\in \Ltwo(\mathbb{R},\frac{dt}{1+t^2})$,
        $m_i,h\in \Ltwo(\mathcal M)$ such that
        $\|h\|_2= \|\sum_{i=1}^n m_i\otimes f_i\|_{\Ltwo(\mathbb{R},\frac{dt}{1+t^2};\Ltwo(\mathcal M))}=1$.
        Using the
        formulas of Lemma \ref{lem:pre-adjoint},
        $$QV_*(\id-a_I)_*R_{I*}(f)=\sum_{i=1}^n m_i h\otimes F_i$$ 
        where for $i=1,...,n$
        \begin{align*}
          F_i = \frac{\bigchi_{I}}{\sqrt{|I|}\sqrt{1+t^2}} f_i &- \frac{\bigchi_{I}}{|I|}  \langle 1, \frac{\sqrt{1+t^2}}{\sqrt{|I|}} \bigchi_{I}f_i \rangle_{1/\omega} \in \Ltwo(\mathbb R, (1+t^2)dt).
        \end{align*}
    Set $b=\sum_{i=1}^n m_i\otimes F_i\in \Ltwo(\mathcal M;\Ltwo(\mathbb R, (1+t^2)dt))$. Then, clearly $\mbox{supp}_{\mathbb{R}}(b) \subseteq I$, $\int_{I} b = 0$ since  $\int_{I} F_i = 0$. Let $G=\frac{\bigchi_{I}}{\sqrt{|I|}\sqrt{1+t^2}}\sum_{i=1}^n m_i\otimes f_i$. Using that $b$ is the projection of $G$ onto the orthogonal of constant functions on $I$, one  gets
        $$ \Big( \int_{I} \big\| b(t) \big\|_2^2 \ dt \Big)^{1/2} \leq
        \big\|G\big\|_{L_2(\mathbb R)}\leq \frac{1}{\sqrt{|I|}} \Big\|\sum_{i=1}^n m_i\otimes f_i\Big\|_{\Ltwo(\mathbb{R},\frac{dt}{1+t^2};\Ltwo(\mathcal M))}\leq \frac{1}{ \sqrt{|I|}}.
        $$
        In other words, $QV_*(\id-a_I)_*R_{I*}(f)$ is a $c$-atom, hence
        $$            \|VQ^*g\|_{\BMOr}\leq  \|g\|_{\Hardyc^*}.$$\end{proof}

        The reverse inclusion $\BMOr(\mathbb{R},\mathcal{M}) \subseteq \Hardyc^*$ is more involved. We need to check that every operator in $\BMOr(\mathbb{R},\mathcal{M})$ induces a continuous functional on $\Hardyc$. The starting point of our argument is that every operator $\varphi \in \BMOr(\mathbb{R},\mathcal{M})$ induces a functional on the algebraic vector space $\mathcal H$ generated by the $c$-atoms in $\Hardyc\subset \Lone(\Avnat)$.

    Recall that $\Lone(\mathcal{M};\Ltwoc(\mathbb{R},dt))$ can also
    be interpreted as a space of functions (defined a.e.) with values
    in $\Lone(\mathcal M)$. By definition, as a function any $c$-atom
    $a$ is the pointwise product of a compactly supported function in
    $\Ltwo(\mathbb R;\Ltwo(\mathcal{M}))$ with a constant element
    $h\in \Ltwo(\mathcal{M})$, thus it is an element in
    $\Lone(\mathcal{M};\Ltwoc(\mathbb{R},dt))$ and moreover
    $\|a\|_{\Lone(\mathcal{M};\Ltwoc(\mathbb{R},dt))}\leq 1$. To
    emphasize, we will denote the inclusion by
    $\gamma: \mathcal H\to
    \Lone(\mathcal{M};\Ltwoc(\mathbb{R},dt))$. The arguments we just
    gave also say that it is continuous for the norm on $\mathcal H$
    given by
    $$\|f\|_{\mathcal H}= \inf\{ \sum_{i=1}^N |\lambda_i|  \; : \; f=\sum_{i=1}^N \lambda_i a_i \; , N\geq 1, \; \lambda_i\in \C \textrm{ and } a_i \; c\textrm{-atoms} \}.$$
 
    We will denote by $A_g$ the pointwise multiplication by $g$ on
    a.e. functions with values in $\Lone(\mathcal{M})$ or
    $\Ltwo(\mathcal{M})$. It is well defined on all elements that
    have compact support in the spaces we consider. The same kind of arguments justify that
    $A_\omega(a)\in \Lone(\mathcal{M};\Ltwoc(\mathbb{R},1/(1+t^2)dt))$
    and
    $A_\omega : \mathcal H\to
    \Lone(\mathcal{M};\Ltwoc(\mathbb{R},1/(1+t^2)dt))$ is of course
    linear. $A_\omega$ is not continuous but if $a$ is an atom supported on $I$
    then $A_\omega(a)=\widetilde M_\omega^I (\gamma(a))$, where $M_\omega^I: \Ltwo(\R)\to L_2(\R, 1/(1+t^2)dt)$ is the multiplication by $\omega\bigchi_I$ (which is bounded).

    This allows us to define a duality pairing for $\mathcal H$ and
    $\BMOr$ by $\varphi \in \BMOr(\mathbb{R},\mathcal{M})$, $f\in \mathcal H$:
$$\la \varphi, f\ra_{\BMOr,\mathcal H} =\langle
\varphi, A_\omega(f)
\rangle_{\Linfty(\mathcal{M};\Ltwor(\mathbb{R},\frac{dt}{1+t^2})),
  \Lone(\mathcal{M};\Ltwoc(\mathbb{R},\frac{dt}{1+t^2})) }.$$
This definition is made so that if
$\varphi=\sum_{i=1}^n m_i\otimes f_i \in \mathcal M\otimes \Ltwo(\mathbb{R},\frac{dt}{1+t^2})$ then
$$\la \varphi, f\ra_{\BMOr,\mathcal H} =\int_{\mathbb R}  \tau (f(t)m_i) f_i(t) dt.$$ 
So, we recover the classical duality pairing for functions.
    
    \begin{lemma}\label{lem:functionalonAtoms}
        Let $\varphi \in \BMOr(\mathbb{R},\mathcal{M})$. If $a$ is a $c$-atom in $\Hardyc$, there holds
        \begin{align*}
            |\langle \varphi, a \rangle_{\BMOr,\mathcal H}| \leq \ \|\varphi\|_{\BMOr}.
        \end{align*}
    \end{lemma}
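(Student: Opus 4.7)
The plan is to exploit the mean-zero property $\int_I a = 0$ (immediate from $\int_I b = 0$ and $a = bh$) together with the weight operators $a_I$ and $R_I$ introduced above. Morally this mimics the classical reduction $\langle \varphi, a\rangle = \langle \varphi - \varphi_I, a\rangle$ followed by Cauchy--Schwarz on the interval $I$ with weight $1/|I|$, but it must be implemented through the abstract operators because $\varphi \in \Linfty(\mathcal M; \Ltwor(\mathbb R, \frac{dt}{1+t^2}))$ need not be a genuine function.

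First I verify that $\langle a_I\varphi, A_\omega(a)\rangle = 0$: since $a_I\varphi = \varphi_I\otimes \mathds{1}$ is a simple tensor, the duality bracket reduces (by the formula in Lemma~\ref{lem:pre-adjoint} together with the fact that $a$ is a genuine function in $\Lone(\mathcal M;\Ltwoc(\mathbb R,dt))$) to $\tau(\varphi_I \int_I a) = 0$. Consequently
$$\langle \varphi, a\rangle_{\BMOr, \mathcal H} = \langle (\id - a_I)\varphi, A_\omega(a)\rangle.$$
Next I insert $R_I$ for free. Since $A_\omega(a)$ is supported on $I$ and $R_{I,*}$ acts on functions as multiplication by $\sqrt{\omega}\chi_I/\sqrt{|I|}$ (Lemma~\ref{lem:pre-adjoint}), the element $g := \sqrt{|I|\omega}\,\chi_I\cdot a$ satisfies $R_{I,*}(g) = A_\omega(a)$, and duality of the preadjoint gives
$$\langle \varphi, a\rangle_{\BMOr, \mathcal H} = \langle R_I(\id - a_I)\varphi,\, g\rangle.$$
Applying H\"older for the row/column pairing, the first factor is at most $\|\varphi\|_{\BMOr}$ by definition. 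For the second, I factor $g = B\cdot h$ with $B = \sqrt{|I|\omega}\,\chi_I b$; then
$$\|B\|_{\Ltwo(\mathcal M; \Ltwoc(\mathbb R, \frac{dt}{1+t^2}))}^2 = |I|\int_I \|b(t)\|_{\Ltwo(\mathcal M)}^2\, dt \leq 1$$
by condition (3) of Definition~\ref{def:catom}, while $\|h\|_{\Ltwo(\mathcal M)} = 1$; the noncommutative H\"older inequality then yields $\|g\|_{\Lone(\mathcal M; \Ltwoc(\mathbb R, \frac{dt}{1+t^2}))} \leq 1$, which proves the claim.

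The main delicate point is not the computation itself but the fact that $\varphi$ lives in a space that is \emph{not} a space of functions, so the identities $\langle \varphi_I\otimes \mathds{1}, A_\omega(a)\rangle = 0$ and $A_\omega(a) = R_{I,*}(g)$ must be justified through the abstract extension framework instead of by naive pointwise manipulation. What makes this feasible is exactly the observation in the paragraph preceding the lemma: the atom $a$ itself is a genuine element of $\Lone(\mathcal M;\Ltwoc(\mathbb R,dt))$, so both sides of each identity can be approximated by elements of $\Lone(\mathcal M)\otimes \Ltwo(\mathbb R, \frac{dt}{1+t^2})$ on which the operator formulas of Lemmas~\ref{lem:extensionMapsL2} and~\ref{lem:pre-adjoint} take their concrete multiplicative form.
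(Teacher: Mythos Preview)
Your proof is correct and follows essentially the same route as the paper's: your element $g = \sqrt{|I|\omega}\,\chi_I\, a$ is precisely the paper's $|I|\,R_{I*}(a)$, and your factorization $g = Bh$ with $B = \sqrt{|I|\omega}\,\chi_I\, b$ is exactly the paper's $(\sqrt{|I|}\,A_{\sqrt{\omega}}(b))\cdot h$. The only cosmetic difference is that the paper records the mean-zero step on the predual side as $a_{I*}(A_\omega(a))=0$ rather than pairing with $a_I\varphi$ (these are equivalent by duality); note also that $a_I\varphi = \varphi_I\otimes 1$ with the constant function $1$, not $\mathds{1}$.
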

    \begin{proof}
      Let $I$ be the interval for which $a=bh$ satisfies the
      definition of a $c$-atom. Then $A_\omega(a)=|I|(R_{I*})^{2}(a)$ and
      the mean zero condition exactly means that $a_{I*}(A_\omega(a))=0$. Thus, there holds
      $$\langle \varphi, a \rangle_{\BMOr,\mathcal H}=\la R_I(\id-a_I)\varphi, |I| R_{I*}(a)\ra_{\Linfty(\mathcal{M};\Ltwor(\mathbb{R},\frac{dt}{1+t^2})),
        \Lone(\mathcal{M};\Ltwoc(\mathbb{R},\frac{dt}{1+t^2}))}.$$
      It remains to show that $\||I|R_{I*}(a)\|_{\Lone(\mathcal{M};\Ltwoc(\mathbb{R},\frac{dt}{1+t^2}))}\leq 1$, but this comes from the factorization property in terms of operators
      $$|I|R_{I*}(a)= (\sqrt{|I|} A_{\sqrt\omega}(b)) \cdot h$$ 
      where $\|\sqrt{|I|} A_{\sqrt\omega}(b)\|_{\Ltwo(\mathcal{M};\Ltwoc(\mathbb{R},\frac{dt}{1+t^2}))}
      =\sqrt{|I|} \ \| b\|_{\Ltwo(\mathcal{M};\Ltwoc(\mathbb{R},{dt}))}\leq 1$
      and $h$ is a norm-one operator.           
    \end{proof}

The second step in the proof is to extend the duality pairing to $\Hardyc$. This will require some care and approximations.

Given  a compactly supported continuous function $\xi:\mathbb R\to \R$, 
we denote by $R_\xi$ the convolution on
$\Ltwo(\mathbb R)$ against $\xi$. We could consider its
extension $\widetilde R_\xi$ to any $\Lp(\mathcal M;\Ltwoc(\mathbb
R))$. On the other hand, using vector-valued integration, as the convolution against $\xi$ in $\Lp(\mathbb \R; \Lp(\mathcal M))$ is feasible; we denote it by  $\xi*x$. When $p=2$, we have $\widetilde R_\xi(x)=\xi*x$ using the identification  $\Ltwo(\mathbb \R; \Ltwo(\mathcal M))=\Ltwo(\mathcal M;\Ltwoc(\mathbb R))$. Moreover, we consider here a third possible extension which will be used along this section.

\begin{lemma}\label{lem:conv}
  Assume $\xi$ is a compactly supported continuous function on $\R$. Given any $f\in \Lone(\mathbb \R; \Lone(\mathcal M))$, then $\xi*f\in\Lone(\mathcal M;\Ltwoc(\mathbb R))$ (viewed as a function
  space). Moreover, this induces a continuous linear map
  $C_\xi : \Lone(\mathbb \R; \Lone(\mathcal M))\to\Lone(\mathcal
  M;\Ltwoc(\mathbb R))$.
      \end{lemma}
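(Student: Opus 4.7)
The plan is to start from elementary tensors and extend by density, using the projective tensor identification $\Lone(\mathbb R; \Lone(\mathcal M)) = \Lone(\mathbb R) \widehat \otimes_\pi \Lone(\mathcal M)$ recalled at the beginning of this section.

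First, I would take an elementary tensor $f = m \otimes g$ with $m \in \Lone(\mathcal M)$ and $g \in \Lone(\mathbb R)$. Since convolution is computed pointwise in the $\mathbb R$-variable, one has $\xi * f = m \otimes (\xi * g)$; classical scalar Young's inequality gives $\xi * g \in \Ltwo(\mathbb R)$ with $\|\xi * g\|_{\Ltwo} \leq \|\xi\|_{\Ltwo} \|g\|_{\Lone}$. The column-norm formula for a simple tensor recalled just before Corollary \ref{cor:extension} then yields
\[ \|m \otimes (\xi*g)\|_{\Lone(\mathcal M; \Ltwoc(\mathbb R))} = \|m\|_{\Lone(\mathcal M)} \, \|\xi*g\|_{\Ltwo} \leq \|\xi\|_{\Ltwo} \|m\|_{\Lone} \|g\|_{\Lone}. \]

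Next, for a finite sum $f = \sum_{i=1}^n m_i \otimes g_i$, the triangle inequality combined with the elementary-tensor bound gives $\|\xi * f\|_{\Lone(\mathcal M; \Ltwoc)} \leq \|\xi\|_{\Ltwo} \sum_i \|m_i\|_{\Lone} \|g_i\|_{\Lone}$. Taking the infimum over all such representations and invoking the projective tensor identification produces the clean bound $\|\xi * f\|_{\Lone(\mathcal M; \Ltwoc)} \leq \|\xi\|_{\Ltwo} \|f\|_{\Lone(\mathbb R; \Lone(\mathcal M))}$ on the dense subspace $\Lone(\mathcal M) \otimes \Lone(\mathbb R)$, so the operator $C_\xi$ extends continuously to the whole space.

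The subtle step, which I expect to be the main obstacle, is to identify the abstract extension $C_\xi f$ with the concrete pointwise Bochner convolution $\xi * f$. Vector-valued Young's inequality shows that Bochner convolution is continuous $\Lone(\mathbb R; \Lone(\mathcal M)) \to \Ltwo(\mathbb R; \Lone(\mathcal M))$, while Proposition \ref{functions} (applied with $p=1$) provides an injective contraction $\Lone(\mathcal M; \Ltwoc(\mathbb R)) \hookrightarrow \Ltwo(\mathbb R; \Lone(\mathcal M))$. Both maps agree on the dense algebraic tensor product, so their continuous extensions coincide inside $\Ltwo(\mathbb R; \Lone(\mathcal M))$. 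This identification is precisely what the parenthetical ``viewed as a function space'' in the statement refers to: $\Lone(\mathcal M; \Ltwoc)$ is not literally a function space, so the image $C_\xi f$ must be interpreted through the injection of Proposition \ref{functions}.
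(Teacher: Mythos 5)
Your proof is correct and follows essentially the same route as the paper: reduction to elementary tensors via the projective tensor identification, the scalar Young inequality, and extension by density, the only cosmetic difference being that you compute the $\Lone(\mathcal M;\Ltwoc(\mathbb R))$-norm of $m\otimes(\xi*g)$ directly from the simple-tensor formula while the paper factorizes $m=rs$ with $r,s\in\Ltwo(\mathcal M)$ and applies H\"older. Your closing identification of the abstract extension with the Bochner convolution, via vector-valued Young and the injection of Proposition~\ref{functions}, is a welcome explicit justification of the phrase ``viewed as a function space'', which the paper leaves implicit.
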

      \begin{proof}
        Recall that $\Lone(\mathbb R; \Lone(\mathcal M))$ coincides with the projective tensor product $\Lone(\R) \widehat \otimes_\pi \Lone(\mathcal M)$. It suffices to prove the statement for a simple tensor
        $f=g \otimes m$ with $g\in\Lone(\R)$ and
        $m\in \Lone (\mathcal M)$. Then the function $\xi*f$
        corresponds to the operator
        $m\otimes (\xi*g)\in \Lone(\mathcal M;\Ltwoc(\mathbb
        R))$. Factorizing $m=rs$ with $r,s\in \Ltwo(\mathcal M)$ and
        $\|r\|_2=\|s\|_2=\|m\|_1^{1/2}$, we obtain a factorization in
        term of operators $m\otimes (\xi*g)= (r \otimes (\xi*g))s$
        so that $\|r \otimes (\xi*g)\|_{ \Ltwo(\mathcal M;\Ltwoc(\mathbb
        R))}=\|r\|_2 \|\xi*g\|_2$. Thus, we can conclude due to the Young inequality that
        $$\|m\otimes (\xi*g)\|_{ \Lone(\mathcal M;\Ltwoc(\mathbb
        R))}\leq \|r\|_2\|s\|_2\|\xi*g\|_2\leq \|m\|_1 \|g\|_1 \|\xi\|_2.$$
      We get that the norm of $C_\xi$ is controlled by $\|\xi\|_2$.
      \end{proof}
    Let fix some non-negative even continuous function $\phi:\mathbb R\to \R$ with support in $(-1,1)$ such that $\int_{\mathbb \R}\phi(t)dt =1$ and denote $\phi_n(x)=n\phi(nx)$.       
\begin{lemma}\label{lem:convolutionAtoms}
        Given a $c$-atom $a=bh$ supported on $I$, the following holds:
        \begin{enumerate}
        \item   $\frac 12\phi_n*a$ is a finite convex combination of $c$-atoms
        \item If $n\geq \frac 2{|I|}$, $\phi_n* a - a= \lambda_n a_n$,
          where $a_n$ is a $c$-atom supported on $2I$,  $\lambda_n\in \C$ with
          $\lim_{n\to \infty} \lambda_n=0$.
        \item We have $\gamma(\phi_n*a)=C_{\phi_n}(a)$ for all $n\geq 1$.
                  \end{enumerate}
      \end{lemma}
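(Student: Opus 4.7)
The plan is to treat all three items through the identification $\phi_n*a=(\phi_n*b)\cdot h$ coming from the factorization $a=bh$, combined with the Bochner-integral representation $\phi_n*b=\int \phi_n(t)\,\tau_tb\,dt$ that exhibits $\phi_n*b$ as a continuous average of translates. Each translate $\tau_tb$ is supported on the shifted interval $I+t$, has zero integral, and the same $L_2(\mathbb R;L_2(\mathcal M))$-norm as $b$, so that $(\tau_tb)\cdot h$ is already a genuine $c$-atom on $I+t$. All the work reduces to discretizing this superposition in a way that respects the three constraints of Definition~\ref{def:catom} after potentially enlarging the support interval.

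For (1) I would partition $\mathrm{supp}(\phi_n)\subseteq[-1/n,1/n]$ into pieces $A_1,\ldots,A_N$ each of length at most $|I|$, set $\alpha_k=\int_{A_k}\phi_n$ and $c_k=\alpha_k^{-1}\int_{A_k}\phi_n(t)\,\tau_tb\,dt$, obtaining $\phi_n*b=\sum_k\alpha_k c_k$ with $\sum_k\alpha_k=1$. Each $c_k$ is supported on the interval $I+A_k$ (of length $|I|+|A_k|\leq 2|I|$), has zero integral, and by Jensen carries only the inherited bound $\|c_k\|_2\leq\|b\|_2\leq 1/\sqrt{|I|}$. Rescaling $c_k\cdot h$ by $\gamma_k=\sqrt{(|I|+|A_k|)/|I|}\leq\sqrt 2$ produces honest $c$-atoms $\tilde a_k$ on $I+A_k$, so $\phi_n*a=\sum_k\alpha_k\gamma_k\tilde a_k$ with $\sum_k\alpha_k\gamma_k\leq\sqrt 2$. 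Dividing by $2$ leaves nonnegative coefficients summing to some $s\leq\sqrt 2/2<1$, and padding with $\tfrac{1-s}{2}a_0+\tfrac{1-s}{2}(-a_0)=0$ for any fixed $c$-atom $a_0$ (using that $-a_0$ is again a $c$-atom) converts the bounded sum into a genuine finite convex combination.

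For (2), the hypothesis $n\geq 2/|I|$ places $\mathrm{supp}(\phi_n*b)\subseteq 2I$, so the difference $\phi_n*b-b$ lives on $2I$, still has zero integral (since $\int b=0$ and $\int\phi_n=1$), and its $L_2$-norm $\epsilon_n$ tends to $0$ by the standard Bochner-valued approximation of identity. Setting $\lambda_n=\epsilon_n\sqrt{2|I|}$ and $a_n=\lambda_n^{-1}(\phi_n*b-b)\cdot h$ then satisfies the three defining conditions of a $c$-atom on $2I$, with $\lambda_n\to 0$. For (3) I would check on the factorization $a=bh$ that both $\gamma(\phi_n*a)$ (using the decomposition from (1) reassembled as a function, $\sum_k\alpha_k\gamma_k\gamma(\tilde a_k)=\sum_k\alpha_kc_k\cdot h$) and $C_{\phi_n}(a)$ (by Bochner convolution on the simple tensor factorization of $b$, as in Lemma~\ref{lem:conv}) reduce to the same element $(\phi_n*b)\cdot h\in L_1(\mathcal M;L_2^c(\mathbb R,dt))$.

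The main obstacle is the bookkeeping in (1): the inherited bound $\|c_k\|_2\leq 1/\sqrt{|I|}$ falls short of the stricter $1/\sqrt{|I+A_k|}$ demanded by the $c$-atom condition on the enlarged interval, producing the $\sqrt 2$ loss that explains the $1/2$ normalization in the statement. The padding trick $a_0+(-a_0)=0$ is then what upgrades the resulting bounded sum into a genuine convex combination. Once this function-valued picture is nailed down, (2) and (3) are direct verifications.
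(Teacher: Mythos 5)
Your proof is correct and follows essentially the same route as the paper: for (1) you partition the support of $\phi_n$ into pieces of length at most $|I|$ and recognize each piece-convolution of $b$ as a mean-zero function on an interval of length at most $2|I|$ with the inherited $L_2$-bound, for (2) you renormalize $\phi_n*b-b$ on $2I$, and for (3) you identify both sides as the a.e.\ function $(\phi_n*b)h$, which is exactly the paper's first suggested verification. The only cosmetic difference is bookkeeping in (1): the paper absorbs the loss into a factor $2\|\phi_n\bigchi_{K_j^n}\|_1$ per piece so the coefficients sum to exactly $2$, while you keep the sharper $\sqrt2$ constant and then pad with $\tfrac{1-s}{2}(a_0)+\tfrac{1-s}{2}(-a_0)$ to reach an exact convex combination (just discard pieces where $\int_{A_k}\phi_n=0$ before normalizing).
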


    \begin{proof}
     
      For (1), cut $(-\frac 1 n,\frac 1n)$ into
      $N$ disjoint intervals $K_j^n$ whose lengths are less than $|I|$ (we
      can have $N=1$) and consider $b_j^n=(\phi_n\bigchi_{K_j^n})*b$. For
      all $j$, $b_j^n$ is supported on an interval of length $2|I|$, has mean 0 and $L_2$-norm less than
      $\frac {2\|\phi_n\bigchi_{K_j^n}\|_1} {\sqrt{|I|}}$. Thus $b_j^n h$ is
      an atom up to a factor $2\|\phi_n\bigchi_{K_j^n}\|_1$ and
      $\phi_n * a=\sum_{j=1}^N b_j^n h$. But there holds
      $\sum_{j=1}^N 2\|\phi_n\bigchi_{K_j^n}\|_1=2$, yielding (1).

      Similarly, for $(2)$, $\phi_n*b-b$ has support in $2I$ with mean
      $0$. Its $\Ltwo$-norm goes to $0$ giving the result. 

      There are many ways to check (3). One can identify them as
      a.e. functions from $\R$ to $\Lone(\mathcal M)$. Another one is to
      note that point (1) implies that the convolution by $\phi_n$ is
      continuous for $\|\cdot\|_{\mathcal H}$. Moreover, atoms of the form
      $a=bh$, with $b$ a $\Ltwo$-atom which is a simple tensor in
      $\Ltwo(\R)\otimes \Ltwo(\mathcal M)$, and
      $h\in \Ltwo(\mathcal M)$ are dense in $\mathcal H$ for
      $\|\cdot\|_{\mathcal H}$.  On such atoms, the formula is obvious
      and then extends by continuity.
    \end{proof}

       \begin{proposition}\label{prop:BMOCompact}
         Let $\psi \in \BMOr(\mathbb{R},\mathcal{M})$ satisfying
         $\widetilde{P}_J \psi = \psi$ for some finite interval
         $J$. Then for any family of atoms $(a_i)$ and
         $(\lambda_i)\in \ell_1$ such that $\sum_{i=1}^\infty  \lambda_i a_i=0$ in  $\Lone(\Linfty(\mathbb{R}) \overline{\otimes} \mathcal{M})$ there holds
     $$            \sum_{i=1}^\infty  \lambda_i \ \langle \psi, a_i \rangle_{\BMOr,\mathcal H} = 0.$$
           \end{proposition}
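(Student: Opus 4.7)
The plan is to exploit the compact support of $\psi$ (encoded by $\widetilde P_J\psi=\psi$) to replace the ill-defined map $A_\omega$ by a bounded operator on $\Lone(\Linfty(\mathbb R)\overline\otimes\mathcal M)$, after first regularizing the atoms by convolution with the mollifier $\phi_n$ of Lemma~\ref{lem:convolutionAtoms}. Concretely, because $\omega\bigchi_J$ is a bounded, compactly supported function, multiplication by it, $M_J:f\mapsto \omega\bigchi_J f$, is a bounded operator from $\Ltwo(\mathbb R,dt)$ to $\Ltwo(\mathbb R,\frac{dt}{1+t^2})$, and Corollary~\ref{cor:extension} produces a bounded extension $\widetilde M_J:\Lone(\mathcal M;\Ltwoc(\mathbb R,dt))\to\Lone(\mathcal M;\Ltwoc(\mathbb R,\frac{dt}{1+t^2}))$. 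The first step is to check the identity
\[
\langle\psi,a\rangle_{\BMOr,\mathcal H}=\langle\psi,\widetilde M_J\gamma(a)\rangle
\]
for every $c$-atom $a$ supported in $I$: starting from $\langle\psi,a\rangle_{\BMOr,\mathcal H}=\langle\widetilde P_J\psi,A_\omega(a)\rangle=\langle\psi,\widetilde P_JA_\omega(a)\rangle$, both $\widetilde P_JA_\omega(a)$ and $\widetilde M_J\gamma(a)$ equal the extension of multiplication by $\omega\bigchi_{I\cap J}$ applied to $\gamma(a)$, which is supported in $I$.

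Once this is in place, for each $n\geq1$ the functional
\[
L_n:f\longmapsto \langle\psi,\widetilde M_J\,C_{\phi_n}(f)\rangle
\]
is continuous on $\Lone(\Linfty(\mathbb R)\overline\otimes\mathcal M)$ by Lemma~\ref{lem:conv}. By Lemma~\ref{lem:convolutionAtoms}(3), $C_{\phi_n}(a)=\gamma(\phi_n*a)$ for every $c$-atom $a$, so by the Step-1 identity $L_n(a)=\langle\psi,\phi_n*a\rangle_{\BMOr,\mathcal H}$. Feeding the $\Lone$-convergent sum $\sum_i\lambda_ia_i=0$ into the continuous functional $L_n$ yields
\[
\sum_{i=1}^{\infty}\lambda_i\,\langle\psi,\phi_n*a_i\rangle_{\BMOr,\mathcal H}=0\qquad\text{for every }n\geq 1.
\]

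It then remains to pass to the limit $n\to\infty$. For each fixed $i$, Lemma~\ref{lem:convolutionAtoms}(2) gives $\phi_n*a_i-a_i=\mu_{n,i}\tilde a_{n,i}$ with $\tilde a_{n,i}$ a $c$-atom and $\mu_{n,i}\to0$ once $n\geq 2/|I_i|$, while Lemma~\ref{lem:convolutionAtoms}(1) ensures $\tfrac12\phi_n*a_i$ is always a convex combination of $c$-atoms; together with Lemma~\ref{lem:functionalonAtoms} this gives the uniform bound $|\langle\psi,\phi_n*a_i-a_i\rangle_{\BMOr,\mathcal H}|\leq 3\|\psi\|_{\BMOr}$, with convergence to $0$ as $n\to\infty$ for each $i$. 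An $\ell_1$ dominated convergence, with majorant $3\|\psi\|_{\BMOr}|\lambda_i|$, lets us exchange the sum and the limit, so $\sum_i\lambda_i\langle\psi,\phi_n*a_i\rangle_{\BMOr,\mathcal H}\to \sum_i\lambda_i\langle\psi,a_i\rangle_{\BMOr,\mathcal H}$, which must therefore vanish. The main technical point is Step~1: carefully tracking $\widetilde P_J$ through the definition of the pairing to recognize that, on atoms, it converts the unbounded pointwise multiplication $A_\omega$ into the honestly bounded operator $\widetilde M_J$ whose domain extends to all of $\Lone(\Linfty(\mathbb R)\overline\otimes\mathcal M)$.
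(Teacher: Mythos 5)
Your argument is correct and follows essentially the same route as the paper: you use $\widetilde P_J\psi=\psi$ to replace $A_\omega$ on atoms by the bounded multiplication $\widetilde M_{\omega\bigchi_J}$, combine it with $C_{\phi_n}$ (Lemma~\ref{lem:conv} and Lemma~\ref{lem:convolutionAtoms}(3)) to obtain an $\Lone$-continuous functional reproducing $\langle\psi,\phi_n*a_i\rangle_{\BMOr,\mathcal H}$, and then pass to the limit in $n$ via Lemma~\ref{lem:convolutionAtoms}(1)--(2), Lemma~\ref{lem:functionalonAtoms} and dominated convergence. The only differences from the paper's proof (stating the key identity for the atoms themselves rather than for $\phi_n*a_i$, and keeping the functional as a composition instead of transporting the adjoints onto $\psi$) are cosmetic.
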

    \begin{proof}
      First, by Lemma \ref{lem:convolutionAtoms}, $\phi_n* a_i\in \mathcal H$ for all $n, i\geq 1$. We come back to the definition
              \begin{eqnarray*}
          \langle \psi, \phi_n*a_i \rangle_{\BMOr,\mathcal H} &=&  \langle
\widetilde P_J \psi, \widetilde P_JA_\omega(\phi_n*a_i)
\rangle_{\Linfty(\mathcal{M};\Ltwor(\mathbb{R},\frac{dt}{1+t^2})),
  \Lone(\mathcal{M};\Ltwoc(\mathbb{R},\frac{dt}{1+t^2})) }.
         \end{eqnarray*}
Using the projection $\widetilde{P}_J$, we get 
         $\widetilde P_JA_\omega(\phi_n*a_i)=\widetilde P_J \widetilde
         M_\omega^J \gamma(\phi_n*a_i)$ and since $\gamma(\phi_n*a_i)=C_{\phi_n}(a_i)$:
 \begin{eqnarray*}
          \langle \psi, \phi_n*a_i \rangle_{\BMOr,\mathcal H} &=&  \langle
(\widetilde M_\omega^J)^* \psi, C_{\phi_n}(a_i)
\rangle_{\Linfty(\mathcal{M};\Ltwor(\mathbb{R})),
                                                                  \Lone(\mathcal{M};\Ltwoc(\mathbb{R})) }\\ &=&  \langle 
 C_{\phi_n}^*(\widetilde M_\omega^J)^* \psi, a_i
\rangle_{\mathcal{M}\overline \otimes \Linfty(\mathbb{R}),  \Lone(\mathcal{M}\overline \otimes \Linfty(\mathbb{R})) }.   
         \end{eqnarray*}
         We can conclude as $\sum_{i=1}^\infty \lambda_i a_i=0$ in $\Lone(\R; \Lone(\mathcal M))$ that $\sum_{i=1}^\infty \lambda_i\la \psi, \phi_n*a \ra=0$.

         Next, by Lemma \ref{lem:functionalonAtoms}, the duality
         pairing is continuous for the norm $\|\cdot\|_{\mathcal H}$. Lemma
       \ref{lem:convolutionAtoms} (1) and (2) mean that
       $\|\phi_n*a_i\|_{\mathcal H}\leq 2 \|a_i\|_{\mathcal H}$ and
   $\|\phi_n*a_i-a_i\|_{\mathcal H}\to 0$ as $n\to \infty$. We can conclude to     $\sum_{i=1}^\infty \lambda_i\la \psi, a_i \ra=0$ by the Lebesgue Theorem.
     \end{proof}

     In order to conclude the argument, it remains to show that any
     operator $\varphi \in \BMOr(\mathbb{R},\mathcal{M})$ induces a
     well-defined functional on $\Hardyc$. The estimates obtained for
     compactly supported $\psi \in \BMOr(\mathbb{R},\mathcal{M})$ will
     yield a general result under another suitable approximation
     argument from Garnett's book \cite[p. 261]{G07} as stated in \cite[Lemma 1.5]{M07}
     in the semi-commutative case. See Appendix~\ref{app:garnett} for its proof.

    \begin{lemma}\label{lem:garnett}
        Suppose $\varphi \in \BMOc(\mathbb{R},\mathcal{M})$ and suppose $J$ is an interval such that $\varphi_J = 0$. Let $3J$ be the interval concentric with $J$ having length $3|J|$. Then there exist some universal $C>0$, $\psi \in \BMOc(\mathbb{R},\mathcal{M})$ such that 
       $$\widetilde{P}_{3J} \psi = \psi,\qquad \widetilde{P}_J (\psi - \varphi) = 0, \qquad \|\psi\|_{\BMOc} \leq C \ \|\varphi\|_{\BMOc}.$$
                \end{lemma}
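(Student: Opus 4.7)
The plan is to mimic the classical Garnett truncation in the semicommutative setting, following \cite[Lemma~1.5]{M07} and \cite[p.~261]{G07}. The construction of $\psi$ from $\varphi$ is explicit and piecewise: $\psi$ coincides with $\varphi$ on $J$, vanishes outside $3J$, and on the two flanks of $3J \setminus J$ is defined as a locally averaged (or dyadically averaged) modification of $\varphi$, chosen so that the boundary defects at $\partial J$ and $\partial(3J)$ do not destroy the $\BMOc$ seminorm. The hypothesis $\varphi_J = 0$ is essential: via a doubling/comparison argument it forces the flank averages to be uniformly bounded by $\|\varphi\|_{\BMOc}$, which in particular guarantees $\psi \in \Linfty(\mathbb{R};\mathcal{M}) \subseteq \Linftyc$. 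The two support/agreement conditions $\widetilde{P}_{3J}\psi = \psi$ and $\widetilde{P}_J(\psi - \varphi) = 0$ are then immediate from the construction.

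The core analytic ingredient is a noncommutative doubling estimate for the $\BMOc$-seminorm: for nested intervals $K \subseteq K'$ with $|K'| \leq \lambda|K|$, Kadison--Schwarz applied to the normal c.p.\ map $a_K$, together with the definition \eqref{eq:simple}, gives
\[
\|\varphi_K - \varphi_{K'}\|_{\mathcal{M}}^2 \leq \Big\|\frac{1}{|K|}\int_K (\varphi - \varphi_{K'})^*(\varphi - \varphi_{K'})\Big\|_{\mathcal{M}} \leq \frac{|K'|}{|K|}\,\|\varphi\|_{\BMOc}^2 \leq \lambda\,\|\varphi\|_{\BMOc}^2.
\]
Chaining such comparisons through a short dyadic ladder of intervals linking $J$ to any $K \subseteq 3J$ with $|K| \asymp |J|$ and $K \cap J \neq \emptyset$, and invoking $\varphi_J = 0$, yields a uniform bound $\|\varphi_K\|_{\mathcal{M}} \leq C\|\varphi\|_{\BMOc}$ for every such $K$. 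This is exactly the input needed to control the flank averages defining $\psi$.

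To estimate $\|\psi\|_{\BMOc} = \sup_I\|R_I(\id-a_I)\psi\|_{\Linftyc}$, we split on the finite interval $I$: (a) $I \cap 3J = \emptyset$ gives zero; (b) $I \subseteq J$ reduces directly to $\|R_I(\id-a_I)\varphi\|_{\Linftyc} \leq \|\varphi\|_{\BMOc}$; (c) $|I| \geq |3J|$ is handled via the uniform $\mathcal{M}$-bound on $\psi$ combined with a crude column $L_2$-estimate on the fixed support $3J$. The main obstacle is the remaining case $|I| \lesssim |J|$ with $I$ straddling a boundary of $J$ or sitting inside a flank. There, $\psi - \psi_I$ combines the restriction of $\varphi - \varphi_I$ on $I \cap J$ with the oscillations of the flank averages on $I \cap (3J\setminus J)$; the first is controlled by the $\BMOc$-seminorm of $\varphi$ on a comparable interval, while the second is controlled by further applications of the doubling estimate to pairs of intervals indexed by positions along the flank. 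Summing the two contributions yields $\|\psi\|_{\BMOc} \leq C\|\varphi\|_{\BMOc}$. Throughout, the genuine subtlety is that the classical pointwise and John--Nirenberg-type arguments must be rephrased at the level of the extension operators $R_I, a_I$ on $\Linftyc$ developed in Section~1, since $\varphi$ is not \emph{a priori} a Bochner-measurable function.
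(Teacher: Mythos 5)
Your overall architecture (Garnett-type truncation, case analysis over test intervals, doubling estimates implemented through the extension operators) is the right one, and your Kadison--Schwarz doubling estimate $\|\varphi_K-\varphi_{K'}\|_{\mathcal M}\le \sqrt{|K'|/|K|}\,\|\varphi\|_{\BMOc}$ for $K\subseteq K'$ is correct. But there is a genuine gap exactly where the lemma is hard: you never specify how $\psi$ is defined on $3J\setminus J$, and the properties you assert in place of a construction are false. The flank values cannot be taken to be averages of $\varphi$ over intervals of size comparable to $|J|$ (the only ones covered by your uniform bound $\|\varphi_K\|_{\mathcal M}\le C\|\varphi\|_{\BMOc}$): for $\varphi=m\otimes\log|x-b|$ with $b$ an endpoint of $J$, any such single-scale choice leaves an unbounded mismatch across $\partial J$, and intervals straddling $b$ of length $\epsilon$ pick up oscillation of order $|\log\epsilon|$. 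One has to match $\varphi$ at every scale near $\partial J$, as the paper does: cut $J$ into dyadic pieces $J_n$ accumulating at the endpoints and put on the mirrored intervals $K_n\subset 3J\setminus J$ the constant value $\varphi_{J_n}$. With that (necessary) construction your two supporting claims fail: the averages $\varphi_{J_n}$ are \emph{not} uniformly bounded by $C\|\varphi\|_{\BMOc}$ (they can grow like $n\,\|\varphi\|_{\BMOc}$), and $\psi$ is not in $\Linfty(\mathbb{R};\mathcal{M})$, since it coincides with $\varphi$ on $J$ and elements of $\BMOc$ need not be bounded functions, nor functions at all. Consequently your deduction that $\psi\in\Linftyc$, and your treatment of the case $|I|\ge |3J|$ ``via the uniform $\mathcal M$-bound,'' are unsupported; what one actually uses for large $I$ is a column $L_2$-bound of $\psi$ over its support, and this is one of the places where $\varphi_J=0$ helps.

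The second missing idea is the control of intervals straddling $\partial J$ or sitting inside a flank once the multi-scale construction is adopted; ``further applications of the doubling estimate indexed by positions along the flank'' is not yet an argument, and in this setting it must be run at the operator level. The paper's proof does this by observing that the flank part of $\psi$ is $\widetilde R_+\,\widetilde{\mathbb{E}^+}\,\widetilde P_{J^+}\varphi$, a reflected dyadic conditional expectation of $\varphi$, and then exploiting that the extension of $\mathbb{E}^+$ is a contraction and the reflection an isometry on $\Linfty(\mathcal M;\Ltwoc(\mathbb{R}))$, together with the comparison Lemma~\ref{lem:auxiliarGarnett1}, to reduce each boundary configuration to $\|\varphi\|_{\BMOc}$; the hypothesis $\varphi_J=0$ enters only once, to bound $\|\varphi_{J_1}\|_{\mathcal M}$ when $I$ lies near the outer edge of the support (besides serving as the normalization that makes the statement meaningful modulo $\mathcal M$), not to bound all flank averages as your write-up suggests. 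As it stands, the proposal postpones precisely the construction and the scale-by-scale matching estimate that constitute the proof.
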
    
    
    \begin{theorem}
        Given a semifinite von Neumann algebra $\mathcal{M}$, there is a contractive inclusion 
        $$            \BMOr(\mathbb{R},\mathcal{M}) \subseteq \Hardyc^*.$$
    \end{theorem}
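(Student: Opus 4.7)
My plan is to define a candidate functional $\tilde\Phi$ on $\Hardyc$ by
\[
\tilde\Phi\Big(\sum_{i} \lambda_i a_i\Big)=\sum_{i}\lambda_i\,\langle\varphi,a_i\rangle_{\BMOr,\mathcal H}
\]
for any atomic decomposition of $f\in\Hardyc$, verify that the right-hand side depends only on $f$ (and not on the chosen decomposition), and extract the contractive estimate. Absolute convergence and the norm bound come for free: Lemma~\ref{lem:functionalonAtoms} yields $|\langle\varphi,a_i\rangle|\leq\|\varphi\|_{\BMOr}$ for every $c$-atom, so $|\tilde\Phi(f)|\leq\|\varphi\|_{\BMOr}\sum_i|\lambda_i|$, and taking the infimum over atomic decompositions gives $|\tilde\Phi(f)|\leq\|\varphi\|_{\BMOr}\,\|f\|_{\Honec}$ as soon as well-definedness is in place.

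The core task is to show that if $\sum_i\lambda_i a_i=0$ in $\Lone(\Linfty(\mathbb R)\overline\otimes\mathcal M)$ with $\sum_i|\lambda_i|<\infty$, then $\sum_i\lambda_i\langle\varphi,a_i\rangle=0$. I would reduce to the compactly supported case handled by Proposition~\ref{prop:BMOCompact} via a Garnett-type exhaustion. Pick an increasing sequence of finite intervals $(J_k)$ with $\R=\bigcup_k J_k$. Since $c$-atoms are mean zero, adding a scalar to $\varphi$ alters none of the brackets $\langle\varphi,a_i\rangle$ and preserves $\|\varphi\|_{\BMOr}$, so replacing $\varphi$ by $\varphi-\varphi_{J_k}$ and invoking Lemma~\ref{lem:garnett} produces $\psi_k\in\BMOr(\mathbb R,\mathcal M)$ with $\widetilde P_{3J_k}\psi_k=\psi_k$, coinciding with $\varphi-\varphi_{J_k}$ on $J_k$, and satisfying $\|\psi_k\|_{\BMOr}\leq C\,\|\varphi\|_{\BMOr}$.

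For any atom $a_i$ with support $I_i\subseteq J_k$, the $\BMOr$-$\mathcal H$ pairing localizes to $I_i$: because $A_\omega(a_i)$ is supported in $I_i\subseteq J_k$ and $\psi_k$ equals $\varphi-\varphi_{J_k}$ there, one has $\langle\psi_k,a_i\rangle=\langle\varphi-\varphi_{J_k},a_i\rangle=\langle\varphi,a_i\rangle$. Proposition~\ref{prop:BMOCompact} applied to the compactly supported $\psi_k$ then gives $\sum_i\lambda_i\langle\psi_k,a_i\rangle=0$, and splitting this sum according to whether $I_i\subseteq J_k$ yields
\[
\sum_{i:\,I_i\subseteq J_k}\lambda_i\langle\varphi,a_i\rangle=-\sum_{i:\,I_i\not\subseteq J_k}\lambda_i\langle\psi_k,a_i\rangle.
\]
The right-hand side is bounded in absolute value by $C\|\varphi\|_{\BMOr}\sum_{i:\,I_i\not\subseteq J_k}|\lambda_i|$, which tends to $0$ as $k\to\infty$ by absolute summability, while the left-hand side tends to $\sum_i\lambda_i\langle\varphi,a_i\rangle$; hence the latter vanishes.

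The step I expect to require the most care is the localization principle $\langle\psi_k,a_i\rangle=\langle\varphi,a_i\rangle$ for $I_i\subseteq J_k$, which has to be unpacked through the definition of the $\BMOr$-$\mathcal H$ bracket using $A_\omega$ and the projections $\widetilde P_{I_i}$ in the column/row Hilbert-valued formalism. Once that localization is cleanly in hand, the exhaustion and tail-bound argument above closes the proof of the contractive inclusion $\BMOr(\mathbb R,\mathcal M)\subseteq\Hardyc^*$.
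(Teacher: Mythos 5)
Your proposal is correct and follows essentially the same route as the paper: well-definedness is reduced to Proposition~\ref{prop:BMOCompact} via Lemma~\ref{lem:garnett} (after normalizing away the mean, which pairs trivially against mean-zero atoms), and the contractive bound comes from Lemma~\ref{lem:functionalonAtoms}. The only difference is bookkeeping: you exhaust $\mathbb{R}$ by intervals $J_k$ and split the sum according to whether $\mathrm{supp}\,a_i\subseteq J_k$, whereas the paper fixes $\varepsilon$, chooses one interval $J$ containing the supports of the first $N$ atoms, and estimates the tail $\sum_{i>N}$ --- the two are interchangeable.
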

    \begin{proof}
      
      Let $\varphi \in \BMOr(\mathbb{R},\mathcal{M})$ and let
      $f \in \Hardyc$ admit an atomic decomposition
      $\sum_{i=1}^\infty \lambda_i a_i$. We want to define the duality pairing by
      $$\langle \varphi, f\rangle = \sum_{i=1}^\infty \lambda_i \langle \varphi,a_i\rangle_{\BMOr,\mathcal H}.$$
      We start by showing that this is well defined algebraically, that
      is, if $f=0$ in
      $\Lone(\Linfty(\mathbb{R}) \overline{\otimes} \mathcal{M})$ then
      the right hand side is also 0.
      
 Given $\varepsilon > 0$, let $N \geq 1$ such that
      $\sum_{i > N} |\lambda_i| < \varepsilon$ and let $J$ be a finite
      interval satisfying for $i=1,...,N$:
        \begin{align*}
            \mathrm{supp}_\R(  a_i ) \subseteq J.
        \end{align*}
        Without loss of generality, we can assume that $\varphi_J = 0$, so by Lemma~\ref{lem:garnett}, there exists some $\psi \in \BMOr(\mathbb{R},\mathcal{M})$ satisfying $\widetilde{P}_{3J}\psi = \psi$, $\widetilde{P}_{J}(\varphi-\psi)=0$ and 
        $$
        \|\psi\|_{\BMOr} \leq C \|\varphi\|_{\BMOr}
        $$
        for some universal constant $C>0$. Therefore, as a consequence of Proposition~\ref{prop:BMOCompact},
        \begin{align*}
            \sum_{i=1}^\infty \lambda_i \langle \varphi, a_i \rangle &=  \sum_{i=1}^\infty \lambda_i \langle \varphi, a_i\rangle - \sum_{i=1}^\infty \lambda_i \langle \psi, a_i \rangle \\
            &= \sum_{i=1}^N \lambda_i \ \langle \varphi - \psi, a_i \rangle + \sum_{i=N+1}^\infty \lambda_i \ \langle \varphi - \psi, a_i \rangle \\
            &= \sum_{i=N+1}^\infty \lambda_i \,\langle \varphi - \psi, a_i \rangle.
        \end{align*}
        We get to
        \begin{align*}
            |\sum_{i=1}^\infty \lambda_i \ \langle \varphi, a_i \rangle | &\leq \sum_{i=N+1}^\infty |\lambda_i| (1 + C) \|\varphi\|_{\BMOr} < (1+C)\varepsilon \|\varphi\|_{\BMOr}.
        \end{align*}
        Since $\varepsilon$ is arbitrarily small, the duality pairing is well defined. To conclude the norm estimate, one just need to use Lemma \ref{lem:functionalonAtoms}.
    \end{proof}

    This last theorem completes the proof of the isometric identity
    \begin{align*}
        (\Hardyc)^* = \BMOr(\mathbb{R},\mathcal{M}).
    \end{align*}
    Therefore, a new description for a predual of
    $\BMOr(\mathbb{R},\mathcal{M})$ has been obtained just in terms of
    a new atomic decomposition, which will be crucial for the study of
    the boundedness of Calder\'on-Zygmund operators from $\Hardyc$ to
    $\Lone(\Avnat)$.

    \medskip
    
    We end this section by briefly mentioning that there is another approach
    to define the duality bracket $\Hardyc$-$\BMOr(\mathbb{R},\mathcal{M})$.
    One can easily show that there is a natural isometric embedding
    $$\pi : \Linfty(\mathcal M;\Ltwor(\R, 1/(1+t^2)dt))\subset B\big(\Ltwo(\R, \frac{dt}{1+t^2} \otimes_2\Ltwo(\mathcal M)), \Ltwo(\mathcal M)\big).$$
    Then for an atom $a=bh$ and $f\in \BMOr(\mathbb{R},\mathcal{M})$,
    one can set $\la a, f\ra=\tau( \pi(f)(b)h)$, without regard to the decomposition of $a$. Then, one can rely on Hilbert valued $\Hone$ and $\BMO$ noting that 
    $$\|f\|_{\BMOr(\mathbb{R},\mathcal{M})}=\sup_{\|h\|_2\leq 1} \| \pi(f)^*(h)\|_{\BMO(\R,\Ltwo(\mathcal M))}.$$
    Nevertheless, one still needs all the above approximation arguments.
    
       \section{Calder\'on-Zygmund operators with operator-valued kernels}

    Let $\mathcal{M}$ be a von Neumann algebra over a separable Hilbert space. Through this section we establish the conditions under which a kernel $K$, defined outside the diagonal with values in $\mathcal{M}$, will induce a Calder\'on-Zygmund operator from $\Hardyc$ into $\Lone(\Linfty(\mathbb{R}) \overline{\otimes} \mathcal{M})$. As one could expect, the resulting operator will be defined up to a pointwise multiplication operator. Recall the identification
    $\Linfty(I \times J) \overline{\otimes} \mathcal{M}=L_\infty(I \times J;\mathcal{M})$ where we consider weak$^*$-measurable $\mathcal M$-valued functions.

    Recall that we denote by $\mathcal S$ the set of compactly supported   essentially bounded functions $\mathbb R\to L_\infty\cap L_1(\mathcal M)$ that are measurable with values in $L_1$. Note that $B_{\Lp(\mathcal A)}\cap \mathcal S$ is norm-dense in $B_{\Lp(\mathcal A)}$ when $1\leq p<\infty$ and weak$^*$-dense for $p=\infty$. Indeed
      if $f\in \Lp(\mathcal A)$ with polar decomposition $f=u|f|$, take $p_n$ a sequence of finite projections strongly going to 1 in $\mathcal M$ and consider
      $f_n=u\max\{|f|,n\} \ p_n \ \bigchi_{[-n;n]}\in \mathcal S$. It is clear that $\|f_n\|_p\leq \|f\|_p$ and $f_n\to f$ in norm in $L_p$ (weak$^*$-convergence if $p=\infty$).

    \begin{definition}\label{def:CZ}
        Assume that $T$ is a bounded operator on $\Ltwo(\Linfty(\mathbb{R}) {\overline{\otimes}} \mathcal{M})$. We say that $T$ is a \emph{Calder\'on-Zygmund operator} if there exists some function
        \begin{align*}
            K : \mathbb{R} \times \mathbb{R} \ \setminus \ \{x=y\} \longrightarrow \mathcal{M}
        \end{align*}
        such that for every pair of intervals $I,J$ satisfying $d(I,J) > 0$, there exist $K_{I,J} \in \Linfty(I \times J) \overline{\otimes} \mathcal{M}$ such that
        \begin{itemize}
          \item ${K}_{I,J}(t) = K(t)$ for  almost every $t \in I \times J$,
 \item   for any $f,g \in \mathcal S$ supported respectively on $J$ and $I$:  \begin{align}\label{dissup}
            \Big(\tau \circ \int\Big)(g \ Tf) = \langle f(y)g(x), K_{I,J} \rangle_{\Lone(\Linfty(I\times J)\overline{\otimes} \mathcal{M}),\Linfty(I \times J)\overline{\otimes} \mathcal{M}}
        \end{align}
      \end{itemize}
      
        We say that $T$ is a \emph{left Calder\'on-Zygmund operator} if it also  satisfies that for any compactly supported $f \in \Ltwo(\Avnat)$ and $h \in \mathcal{M}$:
        \begin{align*}
            T(fh) = T(f)h. 
        \end{align*}

       Let $\lambda>0$, we say that the kernel $K$ (or $T$) satisfies the \emph{Hörmander condition} for $\lambda$ whenever
        \begin{align}\label{eq:hormanderCondition}
            \int_{|x-y| \geq \lambda |y^\prime-y|} \|K(x,y) - K(x,y^\prime)\|_{\mathcal{M}} \ dx \leq C_\lambda
        \end{align}
        for some constant $C_\lambda \geq 0$.  Regard that should the Hörmander condition hold for $\lambda > 0$, then it holds for $\lambda' > \lambda$.
    \end{definition}

    We choose $\mathcal S$ in order to have a perfectly defined duality  in \eqref{dissup}  as $f(y)g(x)\in L_1(\mathbb R^2;\Lone(\mathcal M))$. Moreover, the space $\mathcal S$ is an algebra for the pointwise product. We could also have chosen to state \eqref{dissup} for $L_1(\mathbb R; L_2(\mathcal M))$ instead of $\mathcal S$ without significant modifications.

    The following argument constitutes our main estimate and it can be considered as an arrangement of the scalar case to the semicommutative setting.

    \begin{lemma}\label{lem:CZatom}
        Let $\mathcal{M}$ be a von Neumann algebra. Let $T$ be a left Calder\'on-Zygmund operator which is bounded on $\Ltwo(\mathbb{R};\Ltwo(\mathcal{M}))$ and has associated kernel 
        $$
        K : \mathbb{R} \times \mathbb{R} \ \setminus \ \{x = y\} \longrightarrow \mathcal{M}.
        $$
        If $K$ satisfies the Hörmander condition for some $\lambda\geq 1$ and moreover
           \begin{align*}
            T(a) = T(b)h \ \textrm{\rm for  any $c$-atom}\  a=bh,
        \end{align*}
        then there holds
        \begin{align*}
            \|T(a)\|_{\Lone(\Linfty(\mathbb{R}) \overline{\otimes} \mathcal{M})} \leq \max\{C_\lambda, \lambda^{1/2} \|T\| \}.
        \end{align*}        
    \end{lemma}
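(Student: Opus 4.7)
The strategy is the standard Calder\'on-Zygmund splitting adapted to the semicommutative setting. Let $y_I$ denote the center of $I$ and let $\lambda I$ denote the interval concentric with $I$ of length $\lambda|I|$. I would write
\[
\|T(a)\|_{\Lone(\mathcal{A})} = \int_{\lambda I}\|T(a)(x)\|_{\Lone(\mathcal M)}\,dx + \int_{(\lambda I)^c}\|T(a)(x)\|_{\Lone(\mathcal M)}\,dx
\]
and bound the two pieces separately, using the right-modularity assumption $T(a)=T(b)h$ in both.

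For the local piece, observe that $b\in\Ltwo(\mathcal A)$ with $\|b\|_{\Ltwo(\mathcal A)}\leq |I|^{-1/2}$, so $T(b)\in\Ltwo(\mathcal A)$ with $\|T(b)\|_{\Ltwo(\mathcal A)}\leq \|T\||I|^{-1/2}$. Viewing $T(b)$ as a function with values in $\Ltwo(\mathcal M)$, the noncommutative H\"older inequality $\|T(b)(x)h\|_{\Lone(\mathcal M)}\leq \|T(b)(x)\|_{\Ltwo(\mathcal M)}\|h\|_{\Ltwo(\mathcal M)}$ combined with $\|h\|_{\Ltwo(\mathcal M)}\leq 1$ and Cauchy--Schwarz in $x$ yields
\[
\int_{\lambda I}\|T(b)(x)h\|_{\Lone(\mathcal M)}\,dx \leq |\lambda I|^{1/2}\|T(b)\|_{\Ltwo(\mathcal A)} \leq \lambda^{1/2}\|T\|.
\]

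For the far piece the goal is to exploit the cancellation $\int_I b=0$. The main obstacle, and the place that requires care, is to justify a pointwise kernel representation
\[
T(b)(x) \;=\; \int_I K(x,y)b(y)\,dy \;=\; \int_I \bigl(K(x,y)-K(x,y_I)\bigr)b(y)\,dy, \qquad x\in(\lambda I)^c,
\]
in the a.e.\ sense, starting from the duality relation in Definition~\ref{def:CZ}, which is stated only against test functions in $\mathcal S$ with disjoint supports. I would first approximate $b$ by a sequence in $\mathcal S$ supported on $I$ (density of $\mathcal S\cap B_{\Ltwo(\mathcal A)}$), test $T(b)$ against arbitrary $g\in\mathcal S$ supported on an interval $J\subset(\lambda I)^c$ disjoint from $I$, and use the relation \eqref{dissup} together with Fubini; the freedom in choosing $g$ forces the identity above a.e.\ on $(\lambda I)^c$. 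The mean-zero insertion of $K(x,y_I)$ is then legitimate because $\int_I b=0$ in $\Ltwo(\mathcal M)$.

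Once this representation is in place, the estimate is routine. For $x\in(\lambda I)^c$ and $y\in I$ one has $|x-y_I|\geq \lambda|y-y_I|$, so Fubini and the H\"ormander condition give
\[
\int_{(\lambda I)^c}\|T(b)(x)h\|_{\Lone(\mathcal M)}\,dx \leq \int_I\|b(y)\|_{\Ltwo(\mathcal M)}\Bigl(\int_{|x-y_I|\geq\lambda|y-y_I|}\|K(x,y)-K(x,y_I)\|_{\mathcal M}\,dx\Bigr)dy,
\]
where I also used $\|K(x,y)b(y)h\|_{\Lone(\mathcal M)}\leq \|K(x,y)\|_{\mathcal M}\|b(y)\|_{\Ltwo(\mathcal M)}$. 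The inner integral is bounded by $C_\lambda$, and Cauchy--Schwarz on $I$ yields $\int_I\|b(y)\|_{\Ltwo(\mathcal M)}\,dy\leq |I|^{1/2}\|b\|_{\Ltwo(\mathcal A)}\leq 1$, so the far piece is bounded by $C_\lambda$. Adding the two estimates gives the claimed bound. The delicate step, as indicated, is the rigorous passage from the bilinear duality formulation of the kernel on disjointly supported test functions in $\mathcal S$ to the pointwise a.e.\ kernel representation of $T(b)(x)$ on $(\lambda I)^c$; once that is secured the rest is bookkeeping.
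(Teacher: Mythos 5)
Your proposal is correct in substance and follows the same skeleton as the paper: the same split over $\lambda I$ and $(\lambda I)^c$, the identical $\lambda^{1/2}\|T\|$ estimate for the local piece via $T(a)=T(b)h$, H\"older and $\Ltwo$-boundedness, and the same use of the zero mean of $b$ plus the H\"ormander condition for the far piece. The one place where you diverge is precisely the step you flag as delicate: you aim at an a.e.\ pointwise representation $T(b)(x)=\int_I\bigl(K(x,y)-K(x,y_I)\bigr)b(y)\,dy$ on $(\lambda I)^c$ for a general atom, which is stronger than what is needed and requires extra justification — one has to know that the Bochner integral makes sense (this uses that, by Definition~\ref{def:CZ}, $K$ agrees a.e.\ on $I\times J$ with $K_{I,J}\in\Linfty(I\times J)\overline{\otimes}\mathcal{M}$, hence is essentially bounded there, plus Pettis measurability), and then pass from approximants $b_n\in\mathcal S$ to $b$ at the pointwise level (a.e.\ along a subsequence). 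The paper organizes the argument so as to avoid this entirely: it first assumes $b\in\mathcal S$ and $h\in\Linfty\cap\Lone(\mathcal{M})$, computes $\|T(b)h\bigchi_{(\lambda I)^c}\|_1$ by duality against $g\in\mathcal S$ supported in $(\lambda I)^c$ with $\|g\|_\infty\leq 1$, inserts the kernel only inside the pairing \eqref{dissup} (so the kernel is never integrated pointwise against a general $b$), and then removes the extra hypotheses at the very end by an $\Lone$-norm approximation on the truncations $J_k=[-k,k]\cap(\lambda I)^c$ using only the $\Ltwo$-continuity of $T$ and $\limsup_n\|T(b_n)h_n\bigchi_{J_k}\|_1\leq C_\lambda$. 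Your route works, and once the representation is secured it even handles $h\in\Ltwo(\mathcal{M})$ without approximating it, but the paper's duality-plus-truncation scheme buys you the endpoint bound with only norm-level limits, sidestepping the measurability and a.e.-convergence bookkeeping that your sketch leaves open.
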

    \begin{proof}
        Assume that $\lambda >1$. Let $a = bh$ be a $c$-atom in $\Hardyc$ with support contained in the interval $I=[y_0-d,y_0+d]$ and let $\lambda I = [y_0-\lambda d,y_0+\lambda d]$. Then, the norm
        \begin{align*}
            \|T(a)\|_{\Lone(\mathbb{R};\Lone(\mathcal{M}))} = \int_{\lambda I} \|T(a)\|_{\Lone(\mathcal{M})} + \int_{(\lambda I)^c} \|T(a)\|_{\Lone(\mathcal{M})}
        \end{align*}
        can be bounded in two steps. First, the continuity of $T$ on $\Ltwo(\mathbb{R};\Ltwo(\mathcal{M}))$ implies that
        \begin{align*}
            \int_{\lambda I} \|T(a)&\|_{\Lone(\mathcal{M})} = \int_{\lambda I} \|T(b)h\|_{\Lone(\mathcal{M})} \leq \Big( \int_{\lambda I} \|T(b)\|^2_{\Ltwo(\mathcal{M})} \Big)^{1/2} \ \Big( \int_{\lambda I} \|h\|_{\Ltwo(\mathcal{M})}^2 \Big)^{1/2} \\
            &= \|T(b)\|_{\Ltwo(\mathbb{R};\Ltwo(\mathcal{M}))} \ |\lambda I|^{1/2} \|h\|_{\Ltwo(\mathcal{M})} \\
            &\leq \|T : \Ltwo(\mathbb{R};\Ltwo(\mathcal{M})) \rightarrow \Ltwo(\mathbb{R};\Ltwo(\mathcal{M}))\| \ \|b\|_{\Ltwo(\mathbb{R};\Ltwo(\mathcal{M}))} \ |\lambda I|^{1/2} \ \|h\|_{\Ltwo(\mathcal{M})} \\
            &\leq \lambda^{1/2} \ \|T : \Ltwo(\mathbb{R};\Ltwo(\mathcal{M})) \rightarrow \Ltwo(\mathbb{R};\Ltwo(\mathcal{M}))\|.
        \end{align*}

        To deal with the second term, we have
        $$ \|T(a)\bigchi_{(\lambda I)^c}\|_1 = \|T(b)h \bigchi_{(\lambda I)^c}\|_1 
        = \sup_{\substack{g \in \mathcal S,  \|g\|_{\infty} \leq 1 \\ \bigchi_{(\lambda I)^c}g=g}} \big| \tau \int T(b) h g \big|.$$
        Assume for the moment that $b\in \mathcal S$ and $h\in L_\infty \cap L_1(\mathcal M)$. Let $y_0$ be the center of $I$. Since $I$ and $(\lambda I)^c$ are disjoint measurable sets and $b$ has integral zero, it follows using \eqref{dissup} and the H\"ormander condition that
        \begin{align*}
            \|T(a)&\bigchi_{(\lambda I)^c}\|_1 \leq \sup_g \big| \tau \int_{(\lambda I)^c} \int_{I} K_{(\lambda I)^c,I}(x,y) \ b(y) h g(x) \ dx dy \big| \\
            &\leq \sup_g \big| \tau \int_{(\lambda I)^c} \int_{I} (K_{(\lambda I)^c,I}(x,y) - K_{(\lambda I)^c,I}(x,y_0)) \ b(y) h g(x) \ dx \ dy \big| \\
            &= \big\| \int_{I} (K_{(\lambda I)^c,I}(x,y) - K_{(\lambda I)^c,I}(x,y_0)) \ b(y)h \ dy \big\|_{\Lone((\lambda I)^c;\Lone(\mathcal{M}))} \\
            &\leq \int_{(\lambda I)^c} \int_I \| (K_{(\lambda I)^c,I}(x,y) - K_{(\lambda I)^c,I}(x,y_0)) \ b(y)h \|_{\Lone(\mathcal{M})} \ dx \ dy \\
            &\leq \int_{(\lambda I)^c} \int_I \| (K_{(\lambda I)^c,I}(x,y) - K_{(\lambda I)^c,I}(x,y_0)) \|_{\mathcal{M}} \ \|b(y)\|_{\Ltwo(\mathcal{M})} \ \|h\|_{\Ltwo(\mathcal{M})} \ dx \ dy \\
            &\leq C_\lambda \ \int_I \|b(y)\|_{\Ltwo(\mathcal{M})} \ \|h\|_{\Ltwo(\mathcal{M})} \ dy 
            \leq C_\lambda \|b\|_{2} \ \|h \bigchi_I\|_{2}
            \leq C_\lambda.
        \end{align*}
        The general case follows by approximation. Indeed let
        $b_n\in \mathcal S$ going to $b$ with $\|b_n\|_2\leq \|b\|_2$ as
        explained above and $ h_n\in L_\infty\cap \Lone(\mathcal M)$
        going to $h$ in $L_2(\mathcal M)$ with
        $\|h_n\|_{\Ltwo(\mathcal M)}\leq 1$. Centering $b_n$ on $I$
        decreases its $L_2$-norm, so we can as well assume it has mean
        0. Let $J_k=[-k,k]\cap(\lambda I)^c$. Then for any $k\geq 1$,
                $T(b_n)h_n\bigchi_{J_k}\to T(b)h \bigchi_{J_k}$ in $\Lone$ as
                $n\to \infty$ by the $L_2$-continuity of $T$. It follows that
              $\| T(b)h \bigchi_{J_k}\|_1\leq \limsup_n  \| T(b_n)h_n \bigchi_{J_k}\|_1\leq C_\lambda$. Letting $k\to \infty$ gives $\|T(b)h \bigchi_{(\lambda I)^c}\|_1 \leq C_\lambda$.

        On the other hand, the case $\lambda =1$ can be recovered since the argument above works for $\lambda' > 1$, and taking the limit as $\lambda' \rightarrow 1$ yields the statement of the theorem.
      
      \end{proof}

    In order to extend a left Calder\'on-Zygmund operator to the whole Hardy space $\Hardyc$, we will proceed as in \cite{MeyC97}. A family of bounded kernels will be constructed, yielding a bounded family of Calder\'on-Zygmund operators that will approximate the original operator.
     
    \begin{lemma}\label{lem:approxKernel}
      Let $T$ be a left Calder\'on-Zygmund operator whose associated
      kernel $K$ satisfies the Hörmander condition  for some $\lambda \geq 1$\eqref{eq:hormanderCondition}. Then, there exist a sequence of left Calder\'on-Zygmund operators $(T_m)_{m\geq 1}$ with
      kernels
      $(K_m)_{m \geq 1} \subseteq \Linfty(\mathbb{R} \times
      \mathbb{R}) \overline{\otimes} \mathcal{M}$ and a constant
      $C'>0$ such that
        \begin{align}\label{eq:weakerHormander}
            \int_{|x-y| \geq 2\lambda |y^\prime-y|} \|K_m(x,y) - K_m(x,y^\prime)\|_{\mathcal{M}} \ dx \leq C'.
        \end{align}
         Moreover, there holds
            \begin{align*}
                \lim_{m \rightarrow \infty} \|T_m(f) - T(f) \|_{\Ltwo(\mathbb{R};\Ltwo(\mathcal{M}))} = 0
            \end{align*}
            for every $f \in \Ltwo(\Avnat)$.
    \end{lemma}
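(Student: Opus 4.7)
The strategy is to construct $T_m$ by truncating $K$ with smooth cutoffs near the diagonal (at scale $1/m$) and at infinity (at scale $m$), and then to verify the claimed properties.

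First, I fix a smooth cutoff $\eta_m$ with $\eta_m=0$ on $\{|t|\leq 1/(2m)\}$, $\eta_m=1$ on $\{|t|\geq 1/m\}$, and $|\eta_m'|\leq Cm$; together with $\rho_m$ equal to $1$ on $[-m,m]$, supported in $[-2m,2m]$, with $|\rho_m'|\leq C/m$. Define
\[
K_m(x,y) = K(x,y)\,\eta_m(x-y)\,\rho_m(x)\,\rho_m(y).
\]
Since $K$ is locally essentially bounded off the diagonal, $K_m$ is bounded on its compact support and therefore lies in $\Linfty(\mathbb{R}^2) \overline{\otimes} \mathcal{M}$. The operator $T_m f(x)=\int K_m(x,y)\,f(y)\,dy$ is bounded on $\Ltwo(\mathcal{A})$ (with an $m$-dependent bound), inherits left modularity $T_m(fh)=T_m(f)h$ from the pointwise action of $K_m$, and the kernel identity \eqref{dissup} holds by Fubini with $(K_m)_{I,J}$ the restriction of $K_m$ to $I\times J$. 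Hence each $T_m$ is a left Calder\'on-Zygmund operator with kernel $K_m$.

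Next, I verify the uniform Hörmander condition \eqref{eq:weakerHormander}. By adding and subtracting $K(x,y)\,\eta_m(x-y')\,\rho_m(x)\,\rho_m(y')$, I split
\[
K_m(x,y')-K_m(x,y) = \bigl[K(x,y')-K(x,y)\bigr]\eta_m(x-y')\rho_m(x)\rho_m(y') + K(x,y)\,\Delta_m(x,y,y'),
\]
where $\Delta_m$ collects the difference of cutoffs. Integrating the main term over $|x-y'|\geq 4|y-y'|\geq 2|y-y'|$ yields $\leq C_2$ by the original Hörmander condition \eqref{eq:hormanderCondition}. For the boundary term, the factor of $4$ is crucial: under $|x-y'|\geq 4|y-y'|$ one has $|x-y|\geq (3/4)|x-y'|$, so $|x-y|$ and $|x-y'|$ are comparable, which aligns the transition regions of $\eta_m$ and localizes the support of $\Delta_m$. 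On this aligned support I control $\|K(x,y)\|_{\mathcal{M}}$ by telescoping $K(x,y)=[K(x,y)-K(x,y_\ast)]+K(x,y_\ast)$ against a reference point $y_\ast$ situated outside the active support of the cutoff differences, so that the original Hörmander condition applied to $K(x,y)-K(x,y_\ast)$ delivers the required bound; the argument for the far cutoff $\rho_m$ is analogous and only uses the annulus $m\leq |x|\leq 2m$ or $m\leq |y|\leq 2m$.

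For the $L_2$ convergence I reduce, by density of $\mathcal{S}$ in $\Ltwo(\mathcal{A})$, to $f\in\mathcal{S}$ with compact support. For $f$ supported in $[-N,N]$ and $m>2N$, if $x$ lies outside a $2/m$-neighborhood of $\mathrm{supp}(f)$ with $|x|\leq m$, then $K_m(x,y)=K(x,y)$ for all $y\in\mathrm{supp}(f)$, whence $T_m f(x)=Tf(x)$ by the kernel identity \eqref{dissup}. The errors contributed by the near-diagonal annulus (of measure of order $|\mathrm{supp}(f)|/m$), the far field $\{|x|>m\}$, and the cutoff $\rho_m$ on $\mathrm{supp}(f)$ each vanish in $\Ltwo(\mathcal{A})$ as $m\to\infty$: the first by Cauchy--Schwarz against the bounded $K_m$ on the shrinking region, the second by $Tf\in\Ltwo(\mathcal{A})$ and dominated convergence, and the third since $\rho_m\to 1$ locally. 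To propagate the convergence from $\mathcal{S}$ to all of $\Ltwo(\mathcal{A})$, one invokes a uniform $L_2$-bound on $T_m$ furnished by the Hörmander bound just established together with the $L_2$-boundedness of $T$ via a Cotlar-type argument. The main obstacle is the uniform Hörmander step: since \eqref{eq:hormanderCondition} provides no pointwise size bound on $K$, the boundary term created by the cutoff differences cannot be handled by a direct estimate of $\|K(x,y)\|_{\mathcal{M}}$ and instead requires the telescoping against a reference point sketched above, and it is precisely here that the weaker exponent $\lambda=4$ in \eqref{eq:weakerHormander} is needed to gain the geometric room that aligns the transition scales of $\eta_m$ when comparing $y$ and $y'$.
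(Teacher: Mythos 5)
Your truncation $K_m(x,y)=K(x,y)\,\eta_m(x-y)\,\rho_m(x)\,\rho_m(y)$ cannot deliver the lemma, for two separate reasons. First, the convergence claim fails: the kernel represents $T$ only against test functions with separated supports, so modifying $K$ near the diagonal tells you nothing about how $T$ acts on a neighborhood of $\mathrm{supp}(f)$. Concretely, take $T=\mathrm{Id}$, which is a left Calder\'on--Zygmund operator with kernel $K\equiv 0$ off the diagonal (the pairing \eqref{dissup} is $0$ for disjointly supported $f,g$, modularity is trivial, and \eqref{eq:hormanderCondition} holds with $C_\lambda=0$); your construction then gives $K_m\equiv 0$, hence $T_m\equiv 0$, and $\|T_mf-Tf\|_2=\|f\|_2\not\to 0$. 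Relatedly, in your convergence argument the set of $x$ within $2/m$ of $\mathrm{supp}(f)$ contains $\mathrm{supp}(f)$ itself and does not shrink, and on it neither $Tf$ nor $T_mf-Tf$ admits any kernel representation, so ``Cauchy--Schwarz against the bounded $K_m$'' gives nothing (besides, the essential bound of $K$ at distance $1/(2m)$ from the diagonal is not uniform in $m$). Second, the uniform bound \eqref{eq:weakerHormander} for a truncated kernel genuinely needs a size estimate such as $\|K(x,y)\|_{\mathcal M}\lesssim |x-y|^{-1}$, which is not among the hypotheses: your boundary term $\int \|K(x,y)\|_{\mathcal M}\,|\Delta_m(x,y,y')|\,dx$ over the transition annulus is not controlled by \eqref{eq:hormanderCondition}, and the proposed telescoping $K(x,y)=[K(x,y)-K(x,y_\ast)]+K(x,y_\ast)$ leaves exactly the uncontrolled piece $\int\|K(x,y_\ast)\|_{\mathcal M}\,|\Delta_m|\,dx$ in place. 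The appeal to ``a Cotlar-type argument'' for uniform $L_2$ bounds of $T_m$ is likewise unsubstantiated.

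The paper avoids both obstructions by mollifying the operator rather than cutting the kernel: $T_m:=R_mTR_m$, where $R_m$ is convolution with $\phi_m(x)=m\phi(mx)$. Then $\|T_m\|\le\|T\|$ and $T_mf\to Tf$ in $L_2$ are immediate since $R_m$ is a contraction converging strongly to the identity; the kernel $K_m(x,y)$ is defined by duality, pairing $T(\phi_m(\cdot-y)a)$ with $\phi_m(\cdot-x)b$, so its global boundedness ($\le\|T\|\,\|\phi_m\|_2^2$) and its Lipschitz continuity in $y$ come from the $L_2$-boundedness of $T$ alone, with no size assumption on $K$. Away from the diagonal $K_m=K_{I_x,I_{y,y'}}*(\phi_m\otimes\phi_m)$, so for $|x-y|\ge 4|y-y'|\vee\frac4m$ the $\lambda=2$ H\"ormander condition passes to $K_m$ after the $1/m$-smearing, while the regime $|x-y|\le 4/m$ is handled by the Lipschitz estimate; this is where the weaker exponent $\lambda=4$ is used, not in aligning cutoff transition regions. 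If you wish to rescue a kernel-truncation argument you would have to add a pointwise size hypothesis on $K$, which proves a strictly weaker statement than the lemma as stated.
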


    \begin{proof} 
    We consider an even smooth function $\phi$ which is supported on $(-1,1)$ with
      $\int \phi = 1$ as in Lemma ~\ref{lem:convolutionAtoms}. Then, set $R_m$ to be the operator defined on
      $\Ltwo(\mathbb{R};\Ltwo(\mathcal{M}))$ as $R_m(f) = f * \phi_m$
      where $\phi_m(x) = m \phi(mx)$.
    
      We define the function $K_m:\R^2\to \mathcal M$ by duality. Assume
        $z\in \Lone(\mathcal M)$ is decomposed as $z=ab$ with $a,b\in \Ltwo(\mathcal M)$, and define the quantity 
      \begin{align}\label{formulaKm}
        \la K_m(x,y),z\rangle_{\mathcal M,\Lone(\mathcal M)}  =
        \int \tau\big(T((\tau_y\phi_m)(t)a)\tau_x ({\phi}_m)(t)b\big) dt,
      \end{align}
      where $\tau_x \phi(t) = \phi(t-x)$. Indeed assume $z=\alpha\beta$ with $\alpha,\beta\in \Ltwo(\mathcal M)$ and choose $p$ a
      finite projection so that $pa, p\alpha\in \mathcal M$, then using the right-modularity condition,
      $$\int_\R \tau\big(T(\tau_y\phi_mpa)\tau_x{\phi}_m \ b\big)= \int_\R \tau\big(T(\tau_y\phi_m p)\tau_x \phi_m \ z\big)=\int_\R \tau\big(T(\tau_y\phi_m p\alpha)\tau_x\phi_m \ \beta\big).$$
      Letting $p\to 1$ and using the $L_2$-continuity of $T$ yields
      $$\langle T(\tau_y\phi_ma), \tau_x ({\phi}_m)b \rangle_{\Ltwo(\R;\Ltwo(\mathcal M))}=\langle T(\tau_y\phi_m\alpha), \tau_x ({\phi}_m)\beta \rangle_{\Ltwo(\R;\Ltwo(\mathcal M))}.$$      
      Linearity is proved in a similar way. We also get
      \begin{align*}
            \|K_m(x,y)\|_{\mathcal{M}} &= \sup_{\substack{{a,b\in \Ltwo(\mathcal M)}\\ \|a\|_2=\|b\|_2=1}} | (\int \circ \tau) \big(T(\tau_y \phi_ma)  \ \tau_x {\phi}_m  b \big) | \\
            &\leq \|T\|_{\Ltwo(\R;\Ltwo(\mathcal M))}  \| \phi_m\|_{\Ltwo(\mathbb{R})}^2.
        \end{align*}
        Assuming that $f$ and $g$ are simple functions, we get by linearity 
\begin{align*}
  \langle R_m T R_m f , g \rangle &= \tau \int \int T\Big(\int \phi_m(\cdot-y) \ f(y) \ dy\Big)(t) \ \phi_m(t-x) \ dt \ g(x) \ dx\\
  &= \int\int \tau\big(K_m(x,y)f(y)g(x)\big) \ dx \ dy.
\end{align*}
The formula extends to $f,g\in \Ltwo(\R;\Ltwo(\mathcal M))$ by continuity since $K_m$ is bounded on the whole $\R^2$. Therefore, $K_m$ is the kernel for $R_m T R_m$ in the sense of Definition~\ref{def:CZ}. Moreover, $K_m$ is a continuous function $\R\times\R \to \Linfty(\mathcal M)$ by continuity of translations on $\Ltwo(\R)$ as
\begin{align}
  \nonumber \| K_m(x,y)-K_m(x,y')\|_\mathcal M&\leq                                           \|T\|  \ \| \phi_m\|_{2} \ \| (\tau_y-\tau_{y'})\phi_m\|_2 \\ \nonumber 
  &\leq \|T\| \ m^{1/2} \ \|\phi\|_2 \ \big( 2 m^{3/2}  {|y-y'|} \|\phi'\|_{\infty}\big)\\ \label{lipsest} & =
2  m^2 {|y-y'|} \|T\| \|\phi\|_2 \|\phi'\|_{\infty}
\end{align}
A similar Lipschitz estimate holds for $\| K_m(x,y)-K_m(x',y)\|_\mathcal M$. 
      
On the other hand, the kernel $K_m$ satisfies the Hörmander condition for $2\lambda$. First, consider $x,y,y'$ such that $|x-y| \geq 2\lambda |y^\prime - y|$
and $|x-y| > \frac{4}{m}$, then $|x-y'|\geq \frac{|x-y|}2>\frac 2 m$ as $\lambda\geq 1$. Thus the support of $\tau_x {\phi}_m$ is
disjoint from that of $\tau_y \phi_m$ and $\tau_{y'} \phi_m$. Let
$I_{y,y^\prime}$ and $I_x$ denote some disjoint closed intervals containing
the support of $\tau_y \phi_m,\tau_{y^\prime}\phi_m$ and
$\tau_x {\phi}_m$ respectively. Then, the identities \eqref{dissup} and \eqref{formulaKm} for the kernel
$K_{I_x,I_{y,y^\prime}}$ (extending to 0 outside $I_x\times I_{y,y^\prime}$) give
$$ K_m(x,z)= K_{I_x,I_{y,y^\prime}}*({\phi}_m\otimes \phi_m)(x,z)$$
for $z=y,y'$. Thus we can write
        \begin{align*}
            &\int_{|x-y| \geq 2\lambda |y^\prime - y|\vee \frac{4}{ m} } \|K_m(x,y)-K_m(x,y^\prime)\|_{\mathcal{M}} \ dx \\
            &\leq\int_{|x-y| \geq 2\lambda |y^\prime -y|\vee \frac{4}{m} } \int \int \big\| {K}(x - u,y -{v}) - {K}(x - {u},y^\prime - {v}) \big\|_{\mathcal{M}} \ {\phi_m}(u) \phi_m(v)\ du \ dv \ dx.
        \end{align*}
        Using that for $|u|,|v|\leq \frac{1}{m}$ (i.e. in the support of $\phi_m$), 
$$|x - u - (y -{v}) | \geq |x-y| - \frac{2}{m} > \frac{1}{2} |x-y| \geq \lambda |y^\prime-v -(y-v)|,$$
the H\"ormander condition for the kernel $K$ implies
    $$\int_{|x-y| \geq 2\lambda |y^\prime - y|\vee \frac{4}{ m}} \|K_m(x,y)-K_m(x,y^\prime)\|_{\mathcal{M}} \ dx \leq C.$$ 
    On the other hand, whenever $|x -y | \leq 4/ m$ holds, the estimate \eqref{lipsest} yields
        \begin{align*}
            &\int_{\frac{4}{m} \geq |x-y| \geq 2\lambda |y^\prime -y|} \|K_m(x,y)-K_m(x,y^\prime)\|_{\mathcal{M}} \ dx \\
            &\leq 2 m^2 \|T\| \|\phi^\prime\|_{\infty}\|\phi\|_2 \int_{\frac{4}{m} \geq |x-y| \geq 2\lambda |y^\prime -y|} |y^\prime -y| \ dx \leq \frac{8}{\lambda} \ \|T\| \|\phi^\prime\|_{\infty} \|\phi\|_2.
        \end{align*}
        Thus $K_m$ satisfies condition \eqref{eq:weakerHormander}. Finally, given $f \in \Ltwo(\Avnat)$ there holds
        \begin{align*}
            \|T_m(f) - T(f)\|_{\Ltwo} &\leq \|R_m T (R_m(f) - f)\|_{\Ltwo} + \|(R_m - \id)T(f)\|_{\Ltwo} \\
            &\leq \|T\| \ \|R_m(f) - f\|_{\Ltwo} + \|(R_m - \id)T(f)\|_{\Ltwo} \rightarrow 0.
        \end{align*}
        as $m$ grows to infinity.
  \end{proof}

    Before proving that a left Calder\'on-Zygmund operator extends to a bounded map from $\Hardyc$ into $\Lone(\mathcal{A})$, a fundamental property is included.

    \begin{proposition}
        Let $\mathcal{M}$ be a von Neumann algebra and let $T$ be a left Calder\'on-Zygmund operator which is bounded on $\Ltwo(\mathbb{R};\Ltwo(\mathcal{M}))$ whose kernel is zero. Then $T$ corresponds to the  left multiplication by some operator $F \in \Avnat$.
    \end{proposition}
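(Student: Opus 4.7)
The plan is to show that $T$ is right $\mathcal{A}$-modular and then invoke the commutant theorem for $\mathcal{A}$ acting by left multiplication on its standard form $L_2(\mathcal{A})$.

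\textbf{Step 1 (Locality).} The vanishing of $K$ together with \eqref{dissup} gives, for any $f\in \mathcal{S}$ supported on an interval $I$ and any $g\in \mathcal{S}$ supported on $J$ with $d(I,J)>0$, that $\int\tau(g\cdot Tf)=0$. Since such $g$ span a dense subset of $L_2(J;L_2(\mathcal{M}))$, this forces $Tf|_J=0$. Exhausting $(\overline{I})^c$ by a countable family of such $J_n$ yields $\mathrm{supp}(Tf)\subseteq \overline{I}$, and density of $\mathcal{S}\cap L_2$ in $L_2(\mathcal{A})$ extends this to all $f\in L_2(\mathcal{A})$ supported on $I$.

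\textbf{Step 2 (Right $L_\infty(\mathbb{R})$-modularity).} For a bounded interval $J$ and $f\in L_2(\mathcal{A})$, Step 1 applied to $f\chi_J$ and $f\chi_{J^c}$ shows that $T(f\chi_J)$ lives on $\overline{J}$ and $T(f\chi_{J^c})$ on $\overline{J^c}$. Since $T(f)=T(f\chi_J)+T(f\chi_{J^c})$ and $\partial J$ has Lebesgue measure zero, $T(f\chi_J)=T(f)\chi_J$ almost everywhere. By linearity, density of simple functions, and dominated convergence, $T(f\phi)=T(f)\phi$ for every $\phi\in L_\infty(\mathbb{R})$.

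\textbf{Step 3 (Right $\mathcal{A}$-modularity and conclusion).} Combined with the given identity $T(fh)=T(f)h$ for $h\in\mathcal{M}$, $T$ commutes with right multiplication by any element of the algebraic tensor product $L_\infty(\mathbb{R})\otimes\mathcal{M}$. Because the map $a\mapsto R_a\in B(L_2(\mathcal{A}))$ is an isometric, weak-$*$/SOT-continuous $*$-antihomomorphism on $\mathcal{A}$, weak-$*$ density of $L_\infty(\mathbb{R})\otimes\mathcal{M}$ in $\mathcal{A}$ yields $TR_a=R_aT$ for every $a\in\mathcal{A}$. Since $L_2(\mathcal{A})$ is the standard form of the semifinite von Neumann algebra $\mathcal{A}=L_\infty(\mathbb{R})\overline{\otimes}\mathcal{M}$, the commutant of $\{R_a:a\in\mathcal{A}\}$ coincides with the image of $\mathcal{A}$ under left multiplication. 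Thus there is a unique $F\in\mathcal{A}$ with $T(f)=Ff$ and $\|F\|_{\mathcal{A}}=\|T\|$.

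The main point requiring care is the null-set bookkeeping in Step 2 (the interior/closure distinction), together with checking that the compact-support assumption under which right $\mathcal{M}$-modularity is stated is enough to close the argument at the $L_2$-level in Step 3; both are routine density arguments once Step 1 is established.
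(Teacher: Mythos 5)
Your proof is correct and follows essentially the same route as the paper: both arguments use the vanishing of the kernel (via the weak pairing \eqref{dissup}) together with the right $\mathcal{M}$-modularity to show that $T$ commutes with right multiplication by all of $\Linfty(\mathbb{R})\overline{\otimes}\mathcal{M}$, and then conclude by the standard-form/commutant theorem on $\Ltwo(\mathbb{R};\Ltwo(\mathcal{M}))$. The only difference is organizational: you factor the argument through an explicit support-preservation (locality) lemma before deriving right $\Linfty(\mathbb{R})$-modularity, whereas the paper reaches the same commutation directly by testing against $\bigchi_{\lambda I}\otimes m$ and $\bigchi_{(\overline{I})^c}\otimes 1$ and letting $\lambda\to 1$, with the same density and weak$^*$-continuity (Kaplansky) steps at the end.
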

    \begin{proof}
        Let $f \in \Avnat$ such that $\mbox{supp}\|f\|_2$ is compact and contained in some interval $J$. Then, we claim  that for any $g,g^\prime \in \mathcal S$ there holds
        \begin{align}\label{eq:CZpointwise}
            \langle g^\prime, T(gf) \rangle_{\Ltwo(\mathbb{R};\Ltwo(\mathcal{M}))} = \langle g^\prime, T(g) f \rangle_{\Ltwo(\mathbb{R};\Ltwo(\mathcal{M}))}.
        \end{align}
        As a consequence of Definition~\ref{def:CZ}, given $m \in \mathcal{M} \cap \Lone(\mathcal{M})$ and $I$ an arbitrary interval of positive length and $\lambda<1$, it holds
        \begin{align*}
            \langle g^\prime (\bigchi_{(\overline{I})^c} \otimes 1), T(g (\bigchi_{\lambda I} \otimes m)) \rangle &= \langle g(y) (\bigchi_{\lambda I} \otimes m)(y) g^\prime(x) (\bigchi_{(\overline{I})^c} \otimes 1)(x), K_{(\overline{I})^c,\lambda I} \rangle = 0, \\
            \langle g^\prime (\bigchi_{\lambda I} \otimes 1), T(g (\bigchi_{(\overline{I})^c} \otimes m)) \rangle &= \langle g(y) (\bigchi_{(\overline{I})^c} \otimes m)(y) g^\prime(x) (\bigchi_{\lambda I} \otimes 1)(x), K_{\lambda I,(\overline{I})^c} \rangle = 0.
        \end{align*}
        It follows that
        $$
        \langle g^\prime (\bigchi_{\lambda I} \otimes 1), T(g (\bigchi_{I^c} \otimes m)) \rangle =  \langle g^\prime (\bigchi_{\lambda I} \otimes 1), (\bigchi_{(\overline{I})^c} \otimes 1)T(g (1 \otimes m)) \rangle.$$
        Using modularity, we get
        $$ \langle g^\prime (\bigchi_{\lambda I} \otimes 1), T(g) (\bigchi_{I} \otimes m) \rangle = \langle g^\prime (\bigchi_{\lambda I} \otimes 1), T(g (\bigchi_{I} \otimes m)) \rangle.$$
        By continuity, this also holds for $\lambda=1$ and similarly
        \begin{align*}
            \langle g^\prime (\bigchi_{(\overline{I})^c} \otimes 1), T(g) (\bigchi_{I} \otimes m) \rangle &= 0 = \langle g^\prime (\bigchi_{(\overline{I})^c} \otimes 1), T(g (\bigchi_{I} \otimes m)) \rangle,
        \end{align*}
        yielding
        $\langle g^\prime, T(g)(\bigchi_{I} \otimes m) \rangle =
        \langle g^\prime, T(g(\bigchi_{I} \otimes m))
        \rangle$. By continuity of $T$, this can be extended to any $g,g'\in \Ltwo$. And finally, this identity also remains valid when replacing
        $\bigchi_{I} \otimes m$ by any $f$ by weak$^*$-density of elementary tensors (and Kaplansky's theorem) and by weak$^*$ continuity of all involved maps. Therefore, claim \eqref{eq:CZpointwise} follows.
           
        We have shown that $T$ commutes with right multiplications by $f\in  \Avnat$, thus it has to be a left multiplication by an element in  $\Avnat$ as the algebra  $\Avnat$ acts in standard form on $\Ltwo(\R;\Ltwo(\mathcal M))$. 
    \end{proof}

    \begin{theorem}\label{main}
        Let $\mathcal{M}$ be a von Neumann algebra. Let $T$ be a left Calder\'on-Zygmund operator with associated kernel $K : \mathbb{R} \times \mathbb{R} \setminus \{x = y\} \rightarrow \mathcal{M}$. If $K$ satisfies Hörmander condition \eqref{eq:hormanderCondition}, then $T$ extends to a bounded operator from $\Hardyc$ into $\Lone(\mathbb{R};\Lone(\mathcal{M}))$.
    \end{theorem}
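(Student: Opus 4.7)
The plan is to define $T$ first at the level of individual $c$-atoms via $T(a):=T(b)h$ (taking advantage of the $\Ltwo$-boundedness of $T$ applied to $b$), extract a uniform $\Lone$-bound from Lemma~\ref{lem:CZatom}, and then extend linearly to $\Hardyc$ by using the bounded-kernel approximations $T_m$ of Lemma~\ref{lem:approxKernel} to verify that the extension is decomposition-independent. The key feature we exploit is that each $T_m$ acts through a bounded kernel $K_m$ and is therefore genuinely linear on $\Lone(\mathcal A)$, while still approximating $T$ in $\Ltwo$ with a uniformly controlled H\"ormander constant.

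\medskip

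For an atom $a=bh$ I would set $T(a):=T(b)h$, viewed as the operator product of $T(b)\in\Ltwo(\mathcal A)$ with $h\in\Ltwo(\mathcal M)$. This requires checking that the value does not depend on the factorization of $a$. The argument is that for every $m$ the direct computation $T_m(a)(x)=\int K_m(x,y)b(y)h\,dy=T_m(b)(x)h$ shows $T_m(b)h=T_m(b')h'$ whenever $bh=b'h'$ in $\Lone(\mathcal A)$. Since $T_m(b)\to T(b)$ in $\Ltwo(\mathcal A)$ by Lemma~\ref{lem:approxKernel}, pairing against any $g\in\mathcal S$ yields $\langle g,T_m(b)h\rangle=\langle gh^*,T_m(b)\rangle\to\langle gh^*,T(b)\rangle=\langle g,T(b)h\rangle$, so $T(b)h$ is the unique weak limit and is independent of the factorization. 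With $T(a)=T(b)h$ now well-defined, the H\"ormander condition with $\lambda=2\ge 1$ lets me apply Lemma~\ref{lem:CZatom} to obtain a universal constant $C_0$ with
\[
\|T(a)\|_{\Lone(\mathcal A)}\le C_0\qquad\text{for every $c$-atom }a.
\]

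\medskip

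For a general $f=\sum_i\lambda_i a_i\in\Hardyc$ the series $\sum_i\lambda_i T(a_i)$ converges absolutely in $\Lone(\mathcal A)$ by the atomic bound, so I would define $T(f)$ as this sum and read off $\|T(f)\|_{\Lone}\le C_0\sum_i|\lambda_i|$, hence $\|T(f)\|_{\Lone}\le C_0\|f\|_{\Hardyc}$. The main obstacle is the well-posedness of this definition: equivalently, I must show that if $\sum_i\lambda_i a_i=0$ in $\Lone(\mathcal A)$ then $\sum_i\lambda_i T(a_i)=0$. The approximations are exactly what makes this transparent. Each $T_m$ is a genuine linear operator on $\Lone(\mathcal A)$ through its bounded kernel, so $T_m\bigl(\sum_i\lambda_i a_i\bigr)=\sum_i\lambda_i T_m(a_i)=0$. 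Applying Lemma~\ref{lem:CZatom} to $T_m$ with the H\"ormander condition for $\lambda=4$ and constant $C'$ supplied by Lemma~\ref{lem:approxKernel} gives $\|T_m(a_i)\|_{\Lone}\le C''$ uniformly in $m$ and $i$. Testing against any $g\in\mathcal S$, dominated convergence in $i$ permits interchanging the sum and the limit $m\to\infty$, and the weak convergence $T_m(a_i)\to T(a_i)$ established in Step~1 yields
\[
0=\lim_{m\to\infty}\sum_i\lambda_i\langle g,T_m(a_i)\rangle=\sum_i\lambda_i\langle g,T(a_i)\rangle.
\]
Since $\mathcal S$ separates $\Lone(\mathcal A)$, this forces $\sum_i\lambda_i T(a_i)=0$, the definition descends to $\Hardyc$, and the norm bound $\|T(f)\|_{\Lone}\le C_0\|f\|_{\Hardyc}$ completes the proof.
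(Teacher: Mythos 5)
Your proposal is correct and follows the same overall strategy as the paper: define $T(a):=T(b)h$ on $c$-atoms, prove decomposition-independence through the truncated operators $T_m=R_mTR_m$ of Lemma~\ref{lem:approxKernel} (whose bounded kernels make them act unambiguously on $\Lone(\mathcal A)$), get the uniform atomic bound from Lemma~\ref{lem:CZatom}, and settle well-posedness of the extension by showing that a null atomic decomposition is annihilated. The one genuine difference lies in how you handle that last step. The paper proves the \emph{norm} convergence $\|T(a_i)-T_m(a_i)\|_{\Lone}\to 0$ for each atom, which requires Lemma~\ref{lem:convolutionAtoms} (that $\phi_m*a$ and $\phi_m*a-a$ are combinations/small multiples of $c$-atoms) together with the estimate \eqref{eq:CZestimate}, and then concludes by dominated convergence using the uniform bound \eqref{eq:absoluteBound}. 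You instead only use the \emph{weak} convergence $\langle g,T_m(a_i)\rangle\to\langle g,T(a_i)\rangle$ for $g\in\mathcal S$ (which is immediate from the $\Ltwo$-convergence $T_m(b)\to T(b)$ and H\"older), the uniform bound $\|T_m(a_i)\|_{\Lone}\le C''$ obtained by applying Lemma~\ref{lem:CZatom} to each $T_m$ with $\lambda=4$, and the fact (recorded in the paper) that $\mathcal S$ is weak$^*$-dense in the unit ball of $\mathcal A$ and hence separates $\Lone(\mathcal A)$. This is a legitimate and somewhat lighter route: it bypasses Lemma~\ref{lem:convolutionAtoms} entirely, at the cost of not giving the stronger information $\|T(a)-T_m(a)\|_{\Lone}\to 0$ that the paper's argument yields. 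Two small points to polish: the phrase ``$T_m$ is a genuine linear operator on $\Lone(\mathcal A)$'' should be read as ``well defined and continuous from $\Lone(\mathcal A)$ into $\Linfty(\mathbb R;\Lone(\mathcal M))$ via its bounded kernel'' (it is not bounded $\Lone\to\Lone$, but that is all you need to commute $T_m$ with the $\Lone$-convergent series), and when invoking Lemma~\ref{lem:CZatom} for $T_m$ you should note that $\|T_m\|_{\Ltwo\to\Ltwo}\le\|T\|$ and that $T_m(a)=T_m(b)h$ holds by the kernel formula, so the constant is indeed uniform in $m$.
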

    \begin{proof}
We can assume that the Hörmander condition is satisfied for some $\lambda\geq 1$.   First the operator $T$ can be defined on $c$-atoms. This is
      where we use the modularity. Indeed, given a $c$-atom with
      decomposition $a=bh$ supported on $I$, $T(b)h$ is well defined
      in $\Lone(\Avnat)$
      but it may depend on the
      decomposition. Let us justify that it does not.  First, for any
      $m\geq 1$, since $K_m(x,\cdot)$ belongs to
      $\Avnat$ (actually it is continuous), we have the formula
  \begin{align*}
            T_m(a)(x) = \int K_m(x,y) \ b(y) h \ dy = \int K_m(x,y) \ b(y) \ dt \cdot h=T_m(b)(x) \cdot h.
        \end{align*}    
         Consider another decomposition $a=b^\prime h^\prime$ supported on $J$. Then, we have $T_m(b)h= T_m(b^\prime)h^\prime$ in $\Lone(\Avnat)$. Thus, $T(a)$ can indeed be defined as $T(a)=T(b)h = T(b^\prime)h^\prime$. It can be justified as follows,  for any finite interval $K$, the norm of the difference
        \begin{align*}
            \|\bigchi_K\big( T(b)h - &T(b^\prime)h^\prime\big)\|_{\Lone(\Linfty(\mathbb{R}) \overline{\otimes} \mathcal{M})} \leq \|\bigchi_K\big(T(b)h - T_m(b)h\big)\|_{\Lone} + \|\bigchi_K\big(T_m(b^\prime)h^\prime - T(b^\prime)h^\prime\big)\|_{\Lone}\\
            &\leq \sqrt{|K|} \ \|h\|_{\Ltwo(\mathcal{M})} \ \|T(b)-T_m(b)\|_{\Ltwo} + \sqrt{|K|} \ \|h^\prime\|_{\Ltwo(\mathcal{M})} \ \|T(b^\prime)-T_m(b^\prime)\|_{\Ltwo} 
        \end{align*}
        goes to zero as $m\to \infty$  by Lemma~\ref{lem:approxKernel}. Hence, we indeed have $T(b)h = T(b^\prime)h^\prime$.

In particular $T$ and $T_m$ satisfy the assumptions of Lemma \ref{lem:CZatom} and we can conclude that for some $C' \geq 0$ independent of $m$ (due to \eqref{eq:weakerHormander}):
        \begin{align}\label{eq:absoluteBound}
            \|T(a)-T_m(a)\|_{\Lone(\mathbb{R};\Lone(\mathcal{M}))} \leq  C'.
        \end{align}
  Moreover, $R_m a = \phi_m * a$ and $\phi_m*a-a$ are multiples of  $c$-atoms by Lemma \ref{lem:convolutionAtoms} and  $\|\phi_m*a-a\|_{\Hardyc}\to 0$ as $m\to \infty$. Thus
        \begin{align} \nonumber
            \|T(a)-T_m(a)\|_{\Lone} &\leq \|R_m T R_m(a)-R_m T (a)\|_{\Lone} + \|R_m T(a) - T(a)\|_{\Lone} \\  \label{eq:CZestimate}
            &\leq \|T(R_m a - a)\|_{\Lone} + \|(R_m - \id)Ta\|_{\Lone},
        \end{align}
        which goes to $0$ with $m\to \infty$.

        In order to define $T$ on $\Hardyc$, assume $f$ is decomposed as
        $f=\sum_{i=1}^\infty \lambda_i a_i$ where $a_i$'s are
        $c$-atoms and $(\lambda_i)\in \ell_1$. Then, we check that
    $$T(f)=\sum_{i=1}^\infty\lambda_i T(a_i)\in \Lone(\Linfty(\mathbb{R}) \overline{\otimes} \mathcal{M})$$
    does not depend on the decomposition. Indeed, assume that $f=0$. Then, the extension above is well defined and
        \begin{align*}
            \sum_{i=1}^\infty \lambda_i T_m(a_i)(x) = \int K_m(x,y) \ \sum_{i=1}^\infty \lambda_i a_i(y) \ dy = 0
        \end{align*}
    holds since $K_m(x,\cdot)$ belongs to
    $\Avnat$; actually $T_m$ is bounded from $\Lone(\Linfty(\mathbb{R}) \overline{\otimes} \mathcal{M})$ to $\Linfty(\mathbb{R}) \overline{\otimes} \mathcal{M}$. Therefore, we obtain
        \begin{align*}
            \Big\| \sum_{i=1}^\infty \lambda_i \ T(a_i)\Big\|_{\Lone(\mathbb{R};\Lone(\mathcal{M}))} 
            &\leq  \Big\| \sum_{i=1}^\infty \lambda_i \ (T(a_i) -T_m(a_i)) \ \Big\|_{\Lone(\mathbb{R};\Lone(\mathcal{M}))} \\
            &\leq \sum_{i=1}^\infty |\lambda_i| \ \|T(a_i) - T_m(a_i)\|_{\Lone(\mathbb{R};\Lone(\mathcal{M}))} \\
            &\leq \sum_{i=1}^\infty |\lambda_i| \ \|T(a_i) - T_m(a_i)\|_{\Lone(\mathbb{R};\Lone(\mathcal{M}))}.
        \end{align*}
        The former series tends to $0$ as $m\to \infty$ as consequence of \eqref{eq:absoluteBound} and \eqref{eq:CZestimate}. The boundedness of $T$ is then clear as a consequence of Lemma~\ref{lem:CZatom}.
    \end{proof}

\begin{remark}\label{smallext}
We chose to write the results for Calder\'on-Zygmund defined from $L_2(L_\infty(\R)\overline \otimes\mathcal M)$ into itself. Actually, they extend without any modification to maps from $L_2(L_\infty(\R)\overline \otimes\mathcal M)$ to $L_2(L_\infty(\R)\overline \otimes\widehat {\mathcal M})$ with $\widehat{\mathcal M}$-valued kernels as long as $(\mathcal M,\tau)\subset (\widehat{\mathcal M},\widehat \tau)$ is an inclusion of von Neumann algebras with $\widehat \tau_{|\mathcal M}=\tau$ and $T$ has the right modular property with respect to $\mathcal M$.
\end{remark}

\begin{remark}
  As we already mentioned in the Introduction, all of our results hold
  for functions in $\mathbb{R}^n$. They also extend to
  more general measure metric spaces so long as the underlying measure
  is doubling, that is, if the condition
$$
\mu(B(x,2r)) \leq C_\mu \mu(B(x,r)),
$$
holds for all $x \in \mathrm{supp}(\mu)$ and all $r>0$ and that one has suitable approximations (such as in Lemma \ref{lem:garnett} or \ref{lem:approxKernel}). When $\mu$ is
a measure on $\mathbb{R}^n$ which fails the doubling condition the
definition of the appropriate $\BMO$-type space and a predual is more
involved and due to Tolsa in the classical case \cite{T01}. A
semicommutative definition in that context can be found in
\cite{CP19}. We let the interested reader to try to carry on all our
arguments to that setting.
\end{remark}

\section{An  example}\label{example}

We sketch how to use Theorem \ref{main} to
recover some results in \cite{M07}. A similar approach was used in
\cite{XXX16} to obtain characterizations of some Hardy spaces relying on
column-Hilbert spaces Calder\'on-Zygmund kernels, and indeed constitutes a particular case of our work.

Let $(\mathcal N,\tau)$ be a semifinite von Neumann algebra and consider
$$\mathcal M=\mathcal N\overline \otimes B(\Ltwo(\R^+,t \ dt)^2)$$ with its natural tensor product trace. We fix $\mathds{1}$ a function of norm 1 in $\Ltwo(\R^+,t \ dt)^2$.

Let $P(x,y)=\frac 1 \pi \frac y{x^2+y^2}$ be the Poisson kernel for the
upper half plane. We can define a continuous convolution kernel
$K: \R\times\R \ \setminus \ \{x=y\} \to  \mathcal M=\mathcal N\overline \otimes B(\Ltwo(\R^+,t \ dt)^2)$ by $K(x,v)=\mathcal K (x-v)$ where $\mathcal K$ is defined as 
$$\mathcal K(x)= 1_{\mathcal N} \otimes \Big(\big(\frac {\partial }{\partial x} P(x,\cdot), \frac {\partial }{\partial y} P(x,\cdot)\big)\otimes \mathds{1}\Big)$$
for $x\neq 0$. The map $\mathcal K$ is continuous and bounded on each closed interval not
containing 0. Since $\mathcal N$ does not play any role, $\mathcal K$
actually takes values in a column Hilbert space. It is a standard fact that it satisfies the Mikhlin condition. All this implies that $K$ satisfies the H\"ormander condition.

The  Calder\'on-Zygmund operator $T$ associated to $K$ can be defined. At
the $\Ltwo$-level, it acts only on a column subspace of the $S_2(\Ltwo(\R^+,t \ dt)^2)$ component. Thus, by homogeneity of Hilbertian operator spaces, we deduce
that it is bounded from the classical result with values in Hilbert spaces (i.e. going from $L_2(\mathbb R)$ to $L_2(\mathbb R; L_2(\mathbb R^+,t \ dt))$). Since $T$ is right-modular with respect to
$\mathcal N$ inside the tensor algebra $\mathcal N\overline \otimes B(\Ltwo(\R^+,t \ dt)^2)$
(with $n\mapsto n\otimes p_{\mathds 1}$ which preserves the traces), we indeed have a left Calder\'on-Zygmund operator from $L_\infty(\mathbb R)\overline \otimes \mathcal N$ to $L_\infty(\mathbb R)\overline \otimes \mathcal M$ by Remark \ref{smallext}.

Let $f$ be a compactly supported simple function on $\R$ with values
in $\mathcal N\cap \Lone(\mathcal N)$. If $F$
denotes the harmonic extension of $f$ to the upper half plane, the
column Littlewood-Paley function associated to $f$ is
$$G_c(f)(x)=\Big(\int_{\R^+} |\nabla F(x,y)|^2 y \ dy\Big)^{1/2}.$$
We can see $f$ as a function
with values in $\mathcal M$ by considering
$f\otimes(\mathds 1\otimes \mathds 1)$ as we already did. Some easy computations yield
$$\big|T (f\otimes p_{ \mathds 1})\big|^2=G_c(f)^2  \otimes p_{ \mathds 1}.$$
After giving suitable definitions for all the objects,  Theorem \ref{main} yields that there is a constant $C$ so that there holds 
\begin{equation}\label{meieq}\|G_c(f)\|_{\Lone(\Linfty(\R)\overline \otimes\mathcal M)}\leq C \|f\|_{\Honec(L_\infty(\mathbb R)\overline \otimes \mathcal N)}.\end{equation}
for every $f\in \Honec(\mathcal{A})$. In the same way, one can also get \eqref{meieq} with the Lusin square function $S_c(f)$ instead of the Littlewood-Paley one.

Consequently our definition for $\Hardyc$  matches that of \cite{M07}: we have just proved an inclusion and the dual spaces are the same (see Theorem 2.4 (i) and Corollary 2.7 there). 

As another direct application, we recover a dual result of \cite{JMP14} and close to \cite{HLMP14}: for scalar
kernels satisfying the H\"ormander condition, the associated
Calder\'on-Zygmund operators are bounded on
$L_p(\mathbb R;L_p(\mathcal M))$ when $1<p<2$.

\appendix
\section{Vector-valued Hardy spaces}\label{app:vectorhardy}

    This appendix contains an explicit argument for the construction of the map
    \begin{align*}
        Q: \Ltwozero(\mathbb{R},(1+t^2)dt; \Ltwo(\mathcal M)) \longrightarrow \Hone(\mathbb{R};\Ltwo(\mathcal{M}))
    \end{align*}
    as stated along the discussion preceding Proposition~\ref{prop:meyerMap}. Actually, the map $Q$ can be considered whenever $\Ltwo(\mathcal{M})$ is replaced by any other Banach space. Moreover, a brief study of \emph{molecules}, which give an additional description of the vector-valued Hardy space, is included.
    
    Given a Banach space $\mathbb{X}$ we say that a function $a$ belonging to $\Lone(\R;\mathbb{X})$ is a $\Ltwo(\R;\mathbb{X})$-atom in $\Hone(\R;\mathbb{X})$ whenever it satisfies the following conditions:
    \begin{itemize}
        \item $\mathrm{supp}(a) \subseteq I$ for some interval  $I$,
 \item $\int_{I} a = 0$.
        \item $\|a\|_{\Ltwo(\R;\mathbb{X})} \leq \frac{1}{\sqrt{|I|}}$,
       
    \end{itemize}
    Then, $\Hone(\R;\mathbb{X})$ is defined as the subspace of those functions $f$ in $\Lone$ admitting a decomposition
    \begin{align}\label{eq:vectorH1def}
        f = \sum\limits_{i=1}^\infty \lambda_i  a_i \quad \mathrm{in \ } \Lone(\R;\mathbb{X})
    \end{align}
    for some absolutely summable sequence $(\lambda_i)_{i=1}^\infty$
    and some family of $\Ltwo(\R;\mathbb{X})$-atoms
    $(a_i)_{i=1}^{\infty}$. According to \cite{H06}, the definition
    of $\Hone(\R;\mathbb{X})$ can be given via maximal functions the
    same way it is done in the scalar-valued case. This justifies
    which sort of convergence is considered at
    \eqref{eq:vectorH1def}. It can be checked that the norm
    \begin{align*}
        \|f\|_{\Hone(\R;\mathbb{X})} = \inf\{ \sum_{i=1}^{\infty} |\lambda_i| \ : \ f = \sum_{i=1}^{\infty} \lambda_i a_i \ \mathrm{for \ some \ } (\lambda_i)_i \in \ell_1 \ and \ \Ltwo\textrm{-atoms} \ (a_i)_i \}
    \end{align*}
    is equivalent to the one defined via maximal functions with values in arbitrary Banach spaces. On the other hand, we include below the characterization via \emph{molecules} of $\Hone(\R;\mathbb{X})$ which streamlines the proof for the classical case \cite[Ch. 5 Sec. 5]{Mey90}.

    Let $\mathbb{\mathbb{X}}$ be a Banach space and consider as before
    the function $\omega(x) = 1+x^2$. Then, the space of Bochner measurable functions
    \begin{align*}
        \Ltwo(\R,\omega \ dx; \mathbb{\mathbb{X}}) = \big\{ f \in L_0(\R;\mathbb{\mathbb{X}}) \ : \ \int_{\R} \|f(x)\|^2_\mathbb{\mathbb{X}} \ \omega(x) \ dx \ < \infty \big\}
    \end{align*}
    is contained in $\Lone(\R;\mathbb{\mathbb{X}})$. We will consider the subspace
    \begin{align*}
        M(\mathbb{\mathbb{X}}) = \big\{f \in \Ltwo(\R, \omega \ dx ; \mathbb{\mathbb{X}}) \ : \ \int_{\R} f = 0 \big\}.
    \end{align*}

    \begin{lemma}
        $M(\mathbb{\mathbb{X}})$ is a dense linear subspace of $\Hone(\R;\mathbb{\mathbb{X}})$.
    \end{lemma}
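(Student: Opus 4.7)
The plan is to dispose of density easily and focus on the substantive inclusion $M(\mathbb{X}) \subseteq \Hone(\R;\mathbb{X})$. Every $\Ltwo(\R;\mathbb{X})$-atom $a$ supported on a bounded interval $I$ lies in $M(\mathbb{X})$: it has $\int_\R a = 0$ by definition and
$$\int_\R \|a(x)\|_{\mathbb{X}}^2 \, \omega(x) \, dx \leq \sup_{x \in I}(1+x^2) \cdot \|a\|_{\Ltwo(\R;\mathbb{X})}^2 < \infty.$$
Finite linear combinations of atoms are therefore in $M(\mathbb{X})$, and by definition of the $\Hone(\R;\mathbb{X})$-norm they form a dense subset of $\Hone(\R;\mathbb{X})$. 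Linearity is obvious, so only the inclusion remains to be established.

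For the inclusion I would mimic Meyer's scalar argument (Ch. 5, Prop. 1 of \cite{Mey90}), observing that elements of $\mathbb{X}$ enter only through their norms and that no Hilbert space structure on $\mathbb{X}$ is needed. Partition $\R$ into the dyadic annuli $J_0 = [-1,1]$ and $J_k^\pm = \pm[2^{k-1}, 2^k]$ for $k\geq 1$, on which $\omega(x) \geq \max(1, 2^{2(k-1)})$. Given $f \in M(\mathbb{X})$, set $m_J = |J|^{-1}\int_J f \in \mathbb{X}$ and split
$$f = \sum_J (f - m_J)\bigchi_J + \sum_J m_J \bigchi_J.$$
Each summand in the first sum is mean-zero and supported on $J$, so after normalization it equals $\lambda_J a_J$ for an atom $a_J$ with $|\lambda_J| \lesssim |J|^{1/2}\|f\bigchi_J\|_{\Ltwo(\R;\mathbb{X})}$. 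The pointwise lower bound for $\omega$ on $J$ and Cauchy--Schwarz in the index give
$$\sum_J |\lambda_J| \lesssim \Big(\sum_J |J|/\inf_J\omega\Big)^{1/2} \|f\|_{\Ltwo(\R,\omega;\mathbb{X})},$$
where the geometric series converges.

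The main obstacle is the second sum $g = \sum_J m_J \bigchi_J$, which satisfies $\sum_J m_J |J| = \int_\R f = 0$ but is not itself a sum of atoms. Here I would invoke Meyer's telescoping device: enumerate the annuli outward from the origin as $J_0, J_1^+, J_1^-, J_2^+, \ldots$ and, at stage $n$, form a mean-zero bump supported on $\bigcup_{k \leq n} J_k$ that absorbs the contribution $m_{J_n}\bigchi_{J_n}$ by distributing a compensating constant $-m_{J_n}|J_n|/|\bigcup_{k<n}J_k|$ over the previously assembled union. Each such bump is a scalar multiple of an atom on the cumulative union, with coefficient controlled by $|J_n|^{1/2}\|m_{J_n}\|_{\mathbb{X}}$. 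Using $\|m_{J_n}\|_{\mathbb{X}} \leq |J_n|^{-1/2}(\inf_{J_n}\omega)^{-1/2}\|f\|_{\Ltwo(J_n,\omega;\mathbb{X})}$ (Cauchy--Schwarz applied to the defining mean integral) and a final Cauchy--Schwarz in $n$ produces absolute summability with total $\lesssim \|f\|_{\Ltwo(\R,\omega;\mathbb{X})}$. Since $\mathbb{X}$ appears only through scalar norms, Meyer's scalar argument transfers verbatim and yields $\|f\|_{\Hone(\R;\mathbb{X})} \lesssim \|f\|_{\Ltwo(\R,\omega;\mathbb{X})}$, completing the inclusion.
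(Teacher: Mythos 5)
Your density argument, your treatment of the first sum $\sum_J (f-m_J)\chi_J$, and your guiding principle that Meyer's construction only sees $\|f(x)\|_{\mathbb{X}}$ and hence transfers to Banach-valued $f$ are all fine; the paper's proof is indeed just that transfer. The genuine gap is in your telescoping step for $g=\sum_J m_J\chi_J$. With $U_{n-1}=\bigcup_{k<n}J_k$ and $b_n=m_{J_n}\chi_{J_n}-\frac{m_{J_n}|J_n|}{|U_{n-1}|}\chi_{U_{n-1}}$, the compensating constants you spread over the previously assembled unions are never removed at any later stage, so the bumps do not sum to $g$: one has $\sum_{n\geq 1}b_n=g-m_{J_0}\chi_{J_0}-\sum_{n\geq 1}\frac{m_{J_n}|J_n|}{|U_{n-1}|}\chi_{U_{n-1}}$, and the residual is in general a nonzero $L_1$ function (its integral vanishes, but that is all). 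In particular the global cancellation $\sum_J m_J|J|=\int_{\R} f=0$, which you record, never actually enters your construction. The device that works -- and is what the paper does, directly on the pieces $f_j=f\chi_{\{2^{j-1}<|x|\leq 2^j\}}$ without first removing local means -- compensates with the \emph{tail} sums: setting $I_j=\int f_j$, $S_j=\sum_{k\geq j}I_k$ and $B_j=[-2^j,2^j]$, one takes $a_j=f_j+\frac{S_{j+1}}{|B_{j+1}|}\chi_{B_{j+1}}-\frac{S_j}{|B_j|}\chi_{B_j}$ (these are exactly the factors $2^{-(j+2)}$, $2^{-(j+1)}$ in the paper). Each $a_j$ then has mean zero, consecutive corrections cancel pairwise when summing in $j$, the term at infinity tends to $0$ in $L_1$, and the single surviving boundary term carries the factor $S_0=\int f=0$; the estimates $\|I_j\|_{\mathbb{X}}\lesssim R_j2^{-j/2}$ with $(R_j)_j\in\ell_2$ give the $\ell_1$ bound on the coefficients. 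So the missing idea is precisely that the redistributed constant at stage $j$ must be the tail $S_j$, not the single integral $m_{J_j}|J_j|$; only then does the series reconstruct $f$ and does the hypothesis $\int f=0$ get used.

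A secondary, harmless slip: a mean-zero bump supported on the cumulative union $U_n$ (of length comparable to $|J_n|$) with $L_2$-norm $\lesssim\|m_{J_n}\|_{\mathbb{X}}|J_n|^{1/2}$ is $\mu_n$ times an atom with $\mu_n\lesssim\|m_{J_n}\|_{\mathbb{X}}|J_n|$, not $\|m_{J_n}\|_{\mathbb{X}}|J_n|^{1/2}$, since the atom must have $L_2$-norm at most $|U_n|^{-1/2}$. The corrected coefficient is still summable after Cauchy--Schwarz, so this does not affect the conclusion, but the normalizations need to be tracked once the telescoping is set up correctly.
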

    \begin{proof}
     For $j\geq 0$, set $J_j=\{2^{j-1} < |x| \leq 2^j\}$ and $J_0=\{|x| \leq 1\}$. Let $f \in M(\mathbb{X})$ and define for $j\geq 0$, $f=f\bigchi_{J_j}$.
      
        These functions satisfy
        \begin{align*}
            \|f_j \|_{\Ltwo(\R;\mathbb{\mathbb{X}})} &= \Big( \int_{J_j} \|f(x)\|_\mathbb{\mathbb{X}}^2 \ dx \Big)^{1/2} \\
                       &\leq \Big( \int_{J_j} \|f(x)\|^2_\mathbb{X} \ \omega(x) \ dx \Big)^{1/2} \ 2^{-(j-1)}  =: R_j \ 2^{-j}
        \end{align*}
        so that the sequence $(R_j)_{j \geq 0}$ belongs to $\ell_2$. Let $I_j$ be the integral $\int_{\R} f_j$. Then, by the Cauchy-Schwarz inequality and a similar computation, it follows that, for $j \geq 0$
        \begin{align*}
            \| I_j \|_{\mathbb{X}} & \leq \Big( \int_{J_j} \|f(x)\|^2_\mathbb{X} \ \omega(x) \ dx \Big)^{1/2} \ \Big( \int_{J_j} \omega(x)^{-1} \ dx \Big)^{1/2} \\
            &\leq R_j \ 2^{-j/2}.
        \end{align*}
        Therefore, this yields some estimates for $S_j = \sum_{k \geq j} I_j$. Indeed,
        \begin{align*}
            \| S_j\|_\mathbb{X} \leq c \suma{k \geq j} R_k \ 2^{-k/2}.
        \end{align*}
        Now, let's replace the functions $f_j$ by some \emph{perturbed} atoms $a_j$ given by
        \begin{align*}
            a_j(x) = f_j(x) + S_{j+1} \ 2^{-(j+1)} \ \bigchi_{J_{j+1}}(x) - S_j \ 2^{-j} \ \bigchi_{J_j}(x).
        \end{align*}
        Then, the sequence $(a_j)_{j \geq 0}$ satisfies $\sum a_j=f$ as $S_0=0$ and 
        \begin{align*}
            \int_{\R} a_j(x) \ dx &= \int_{J_j} f(x) \ dx + S_{j+1} - S_{j} \\
            &= \int_{J_j} f(x) \ dx - I_j = 0.
        \end{align*}
        Moreover, it is easy to check that, by hypothesis, the support of $a_j$ is contained in $[-2^{j+1},2^{j+1}]$ and using the triangle inequality,
        \begin{align*}
            \|a_j\|_{\Ltwo(\R;\mathbb{X})}
            &\leq R_j \ 2^{-j} + 2c \suma{k \geq j} R_k \ 2^{-k/2} 2^{-j/2} =\lambda_j.
        \end{align*}
        Then $(\lambda_i)_{i \in \N} \in \ell_1$ by previous computations and since
        \begin{align*}
          \sum_{j=0}^\infty \sum_{k \geq j} 2^{-j/2} R_j \ 2^{-k/2} &\lesssim \sum_{j=0}^\infty R_j 2^{-j} \lesssim   \Big(\sum_{j=0}^\infty R_j^2\Big)^{1/2} =2                                                                        \|f\|_{\Ltwo(\mathbb{R},\omega dt;\Ltwo(\mathcal{M}))}.
        \end{align*}
        Moreover, redefining $a_i$ as $a_i / \lambda_i$, we obtain the expression
        \begin{align*}
            f = \sum_{i\in\N} \lambda_i \ a_i
        \end{align*}
        where each $a_i$ is an atom with support $[-2^{j+1},2^{j+1}]$ and convergence holds in the sense of $\Lone(\mathbb{R};\mathbb{X})$. In conclusion, $f$ belongs to $\Hone(\R;\mathbb{X})$.
    \end{proof}

    \begin{definition}
         A \emph{molecule} $f$ with values in $\mathbb X$, centered at $x_0$ and of width $d > 0$, is defined to be a function belonging to $M(\mathbb{X})$ which is normalized by
        \begin{align*}
            \Big( \int_{\R} \|f(x)\|_\mathbb{X}^2 \ \Big(1 + \frac{|x-x_0|^2}{d^2}\Big) \ dx \Big)^{1/2} \leq d^{-1/2}.
        \end{align*}
    \end{definition}

    The $\Hone(\R;\mathbb{X})$-norm is invariant by translations and
    homogeneous for composition with homothethies. It follows that a
    molecule $f$ sits in $\Hone(\R;\mathbb{X})-$ with a norm
    controlled by an absolute constant. One can check that an atom is a
    molecule. Thus, one can use molecules instead of atoms in the
    definition of $\Hone(\R;\mathbb{X})$.    

    \section{Proof of Lemma~\ref{lem:garnett}}\label{app:garnett}

    The justification for Lemma~\ref{lem:garnett} is included in this
    appendix. This result was stated in the commutative setting by
    Garnett \cite{G07}, and also by Mei without proof \cite{M07}. For
    that reason, a general version of the argument is developed
    here. Before giving the explicit construction, some preliminary
    classical results are showed. We will use the easier form
    \eqref{eq:simple} and to lighten the notation we drop $\otimes \mathds 1$.
\begin{lemma}\label{lem:auxiliarGarnett1}
  Suppose $f \in \Linfty(\mathcal{M};\Ltwoc(\mathbb{R},\frac{dt}{1+t^2}))$ and let $I\subset J$ be finite intervals. Then, there holds
  $$\big\| \frac1{\sqrt {|I|}} \widetilde{\iota}_I(f -f_I) \big\|_{\Linfty(\mathcal{M};\Ltwoc(\mathbb{R}))} \leq\sqrt{ \frac {|J|}{|I|}}  \big\| \frac1{\sqrt {|J|}} \widetilde{\iota}_J(f - f_J) \big\|_{\Linfty(\mathcal{M};\Ltwoc(\mathbb{R}))}.$$
   \end{lemma}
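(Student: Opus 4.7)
The plan is to adapt the classical scalar variance inequality: on $I$, the constant $f_I$ is the best $L_2$-approximation to $f$ among constants, so replacing $f_I$ by $f_J$ costs a positive square term; then restricting from $J$ to $I$ is a contraction on $L_\infty(\mathcal M;\Ltwoc(\R,dt))$, which buys the factor $\sqrt{|J|/|I|}$ after normalizing by $\sqrt{|I|}$.

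More precisely, I would first establish a Pythagorean-type identity in $\mathcal M$: for any constant $c\in \mathcal M$,
\begin{equation*}
  \widetilde\iota_I(f-c)^{*}\widetilde\iota_I(f-c) \;=\; \widetilde\iota_I(f-f_I)^{*}\widetilde\iota_I(f-f_I) \;+\; |I|\,(f_I-c)^{*}(f_I-c),
\end{equation*}
both sides being elements of $\mathcal M$ via the identification $g^{*}g\in L_\infty(\mathcal M)$ for $g\in \Linfty(\mathcal{M};\Ltwoc(\R,dt))$. The cross terms vanish because the defining property of the mean is exactly $a_I(f-f_I)=0$, which gives $\int_I (f-f_I)^{*}\,dt \cdot (f_I-c) = 0$. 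On simple tensors $f\in \mathcal M\otimes \Ltwo(\R,\tfrac{dt}{1+t^2})$ this is a one-line computation using $\widetilde\iota_I$ from Lemma~\ref{lem:extensionMapsL2}; the identity then extends to all $f$ by weak$^*$-density of such tensors together with the weak$^*$-continuity of $\widetilde\iota_I$ and of $a_I$.

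Since both summands on the right are positive elements of $\mathcal M$, I take $\mathcal M$-norms and obtain
\begin{equation*}
  \bigl\|\widetilde\iota_I(f-f_I)\bigr\|_{\Linfty(\mathcal{M};\Ltwoc(\R))} \;\le\; \bigl\|\widetilde\iota_I(f-c)\bigr\|_{\Linfty(\mathcal{M};\Ltwoc(\R))}
\end{equation*}
for every $c\in\mathcal M$. Applied with $c=f_J$ this gives $\|\widetilde\iota_I(f-f_I)\|\le \|\widetilde\iota_I(f-f_J)\|$. Next I use monotonicity in the interval: the multiplication by $\chi_I$ on $\Ltwo(\R,dt)$ is a contractive projection that composes with $\iota_J$ to give $\iota_I$ (since $I\subset J$), and Corollary~\ref{cor:extension} yields the same relation for the extensions, so $\|\widetilde\iota_I(g)\|\le \|\widetilde\iota_J(g)\|$ for any $g\in \Linfty(\mathcal{M};\Ltwoc(\R,\tfrac{dt}{1+t^2}))$. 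Taking $g=f-f_J$ and chaining the two inequalities:
\begin{equation*}
  \bigl\|\widetilde\iota_I(f-f_I)\bigr\| \;\le\; \bigl\|\widetilde\iota_I(f-f_J)\bigr\| \;\le\; \bigl\|\widetilde\iota_J(f-f_J)\bigr\|.
\end{equation*}
Dividing by $\sqrt{|I|}$ and factoring $\sqrt{|J|}/\sqrt{|I|}$ out of the right-hand side yields the claim.

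The only real subtlety is the Pythagorean identity at the abstract level: since $p=\infty>2$, elements of $\Linfty(\mathcal{M};\Ltwoc(\R,dt))$ are not a priori functions, so ``$\int_I (f-f_I)^{*}(f_I-c)\,dt=0$'' must be read operator-theoretically inside $\mathcal M\,\overline\otimes\, B(H)$. I would handle this by reducing to simple tensors (where everything is a genuine integral of $\Lone(\mathcal M)$-valued functions and the calculation is transparent) and then passing to the limit via the weak$^*$-continuous extensions provided by Lemma~\ref{lem:extensionProperties} and Lemma~\ref{lem:extensionMapsL2}.
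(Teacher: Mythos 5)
Your overall chain $\|\widetilde{\iota}_I(f-f_I)\|\le\|\widetilde{\iota}_I(f-f_J)\|\le\|\widetilde{\iota}_J(f-f_J)\|$ is exactly what is needed, and the second step is fine: $\iota_I=P_I\circ\iota_J$ on $L_2$ when $I\subset J$, and Corollary~\ref{cor:extension} together with Lemma~\ref{lem:extensionProperties} transfers this to the extensions. The weak point is how you establish the operator-valued Pythagorean identity for a general $f\in\Linfty(\mathcal{M};\Ltwoc(\R,\frac{dt}{1+t^2}))$. Both sides of that identity (and the cross terms you discard) are \emph{quadratic} in $f$, i.e.\ products of two $f$-dependent factors, and such expressions are not weak$^*$ continuous: $f_n\to f$ weak$^*$ does not give $\widetilde{\iota}_I(f_n-c)^*\widetilde{\iota}_I(f_n-c)\to\widetilde{\iota}_I(f-c)^*\widetilde{\iota}_I(f-c)$ weak$^*$ (already for $\mathcal{M}=\C$ this is the familiar failure of weak convergence in $L_2$ to preserve norms). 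So ``extend from simple tensors using the weak$^*$ continuity of $\widetilde{\iota}_I$ and $a_I$'' does not close the argument; you would need, say, a bounded net of simple tensors converging in the strong operator topology (a Kaplansky-type argument, since products of bounded nets are jointly WOT-convergent under strong convergence), or a polarization argument exploiting separate weak$^*$ continuity in each variable. As written, this step is a genuine gap.

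It is also an avoidable one, and this is what the paper does: you never need the identity, only the inequality, and the inequality follows because \emph{centering is itself an $L_2$-contraction}. Let $\Pi_I:L_2(\R)\to L_2(\R)$ be $\Pi_I h=\bigchi_I h-\frac{\bigchi_I}{|I|}\int_I h$, the orthogonal projection onto mean-zero functions supported on $I$. The identity $\widetilde{\iota}_I(f-f_I)=\widetilde{\Pi}_I\,\widetilde{\iota}_J(f-f_J)$ is \emph{linear} in $f$ on both sides, so it does extend from simple tensors by weak$^*$ continuity of the extension maps; since $\|\widetilde{\Pi}_I\|\le 1$ by Corollary~\ref{cor:extension}, one gets $\|\widetilde{\iota}_I(f-f_I)\|\le\|\widetilde{\iota}_J(f-f_J)\|$ in one stroke, which after dividing by $\sqrt{|I|}$ is exactly the statement. (The same device repairs your first step alone: $\widetilde{\iota}_I(f-f_I)=\widetilde{\Pi}_I\widetilde{\iota}_I(f-c)$ is affine in $f$ for fixed $c$, hence extends by density, and gives the best-constant inequality without any Pythagoras at the operator level.) This is the paper's one-line proof -- restriction and centering with respect to $I$ are contractions at the $L_2$-level -- and your decomposition into ``the mean is the best constant'' plus ``restriction is a contraction'' is the same idea; the operator-valued Pythagorean detour is precisely where the density argument needs more than you provide.
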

    \begin{proof} This is clear because one goes from $\iota_J(f - f_J)$
just by restricting and centering with respect to $I$ and all these operations are contractions at the $L_2$-level.
            \end{proof}

    Let $J = (a,b]$ be a finite interval and let $c_J = \frac{a+b}{2}$ denote the center of $J$. Write 
    \begin{align*}
        J = J_0 \cup \bigcup_{n=1}^\infty J_n \cup \bigcup_{n=1}^\infty J_n^\prime
    \end{align*}
    where $d(J_n, \partial J) = |J_n|$ for $n \geq 0$ and $d(J_n^\prime,\partial J) = |J_n^\prime|$ for $n \geq 1$. Then $J_0$ coincides with the middle third of $J$, that is,
    \begin{align*}
        J_0 = \frac{1}{3} J = \big( c_J - \frac{1}{2} \frac{1}{3} |J|, c_J + \frac{1}{2} \frac{1}{3} |J|\big],
    \end{align*}
    while for any $n \geq 1$,
    \begin{align*}
        J_n &= \big(c_J + \frac{|J|}{3} \sum_{k=0}^{n-1} \frac{1}{2^k}, c_J + \frac{|J|}{3} \sum_{k=0}^{n} \frac{1}{2^k} \big] \\
        J_n^\prime &= \big( c_J - \frac{|J|}{3} \sum_{k=0}^{n} \frac{1}{2^k} , c_J - \frac{|J|}{3} \sum_{k=0}^{n-1} \frac{1}{2^k} \big].
    \end{align*}
    Thus, $|J_n| = |J_n^\prime| = \frac{|J|}{3}\frac{1}{2^n}$. Set $K_n$ (respectively $K_n^\prime$) as the reflection of $J_n$ (respectively $J_n^\prime$) across $b$ (respectively $a$). Then,
    \begin{align*}
        K_n &= \big[ b + \frac{|J|}{3} - \frac{|J|}{3} \sum_{k=1}^n \frac{1}{2^k}, b + \frac{|J|}{3} - \frac{|J|}{3} \sum_{k=1}^{n-1} \frac{1}{2^k} \big), \\
        K_n^\prime &= \big[ a - \frac{|J|}{3} + \frac{|J|}{3} \sum_{k=1}^{n-1} \frac{1}{2^k}, a - \frac{|J|}{3} + \frac{|J|}{3} \sum_{k=1}^{n} \frac{1}{2^k} \big), \\
    \end{align*}
    so $|K_n| = |K_n^\prime| = |J_n| = |J_n^\prime|$. Finally, define $L = [b + \frac{|J|}{3}, \infty)$ and $L^\prime = (-\infty, a - \frac{|J|}{3})$. This construction yields the desired operator $\psi$, which is given by the following expression assuming the normalization $\varphi_J=0$ (recall that $\varphi$ is defined up to a constant factor):
    \begin{align*}
        \psi &= \widetilde{P}_J \varphi + \sum_{n \geq 1} \varphi_{J_n} \otimes \bigchi_{K_n}  + \sum_{n \geq 1} \varphi_{J_n^\prime} \otimes \bigchi_{K_n^\prime}.  
    \end{align*}
    Let $J^+=\cap_{n\geq 1} J_n$ and $J^-=\cap_{n\leq 1}J_n$. It is worth
    to mention that if $\mathbb E^+$ denotes the conditional
    expectation on $L_2(J^+)$ corresponding to the partition
    $\{J_n; n\geq1\}$, then
    $\sum_{n \geq 1} \varphi_{J_n} \otimes \bigchi_{K_n}$ is nothing but $
    \widetilde R_+ (\widetilde {\mathbb E^+}) (\widetilde{P}_{J^+} \varphi)$ where $R_+$ is the reflection with respect to $b$. A similar formula holds for $\sum_{n \geq 1} \varphi_{J_n^\prime} \otimes \bigchi_{K_n^\prime}$.

    We turn to the proof of Lemma~\ref{lem:garnett}. We need to bound, for all finite intervals $I$, the quantity $$\big\| \frac1{\sqrt {|I|}} \iota_I(\psi -\psi_I) \big\|_{\Linfty(\mathcal{M};\Ltwoc(\mathbb{R}))}.$$ 

    We distinguish according to $I$:

    {\it Case 1}: $I\subset J$; there is nothing to do
    as $\iota_{I}(\psi -\psi_{I})=\iota_{I}(\varphi -\varphi_{I})$.
    
    {\it Case 2}: $b\in I$.

    Using Lemma \eqref{lem:auxiliarGarnett1}, we can assume that $I$ is symmetric around $b$ losing a factor $\sqrt 2$.

    If $a\notin I$, then $I\subset \cup_{n=n_0}^\infty(J_n\cup
    K_n)$ with $J_{n_0-1}\cap I=\emptyset$. Similarly, by enlarging it and losing another factor $\sqrt{2}$,     we can assume it is has the form $\cup_{n=n_0}^\infty(J_n\cup  K_n)$ because $|J_{n_0}|\leq |I|$.  Then $\psi_I=\psi_{I^+}=\psi_{I^-}=\varphi_{I^-}$ where $I^-=\cup_{n=n_0}^\infty J_n$ and $I^+=\cup_{n=n_0}^\infty K_n$, $|I|=2|I^-|$. Note that $\widetilde R_+$ yields an isometry and
    $$\| \frac1{\sqrt {|I^+|}} \iota_{I^+}(\psi -\psi_{I^+}) \big\|_{\Linfty(\mathcal{M};\Ltwoc(\mathbb{R}))}=\| \frac1{\sqrt {|I^-|}} \widetilde {\mathbb E^+}\iota_{I^-}(\varphi -\varphi_{I^-}) \big\|_{\Linfty(\mathcal{M};\Ltwoc(\mathbb{R}))}.$$    Thus using that $\widetilde {\mathbb E^+}$ is a contraction commuting with $\iota_{I^-}$, we get that 
    $$\| \frac1{\sqrt {|I|}} \iota_I(\psi -\psi_I) \big\|_{\Linfty(\mathcal{M};\Ltwoc(\mathbb{R}))}^2\leq \| \frac1{\sqrt {|I^+|}} \iota_{I^+}(\psi -\psi_{I^+}) \big\|_{\Linfty(\mathcal{M};\Ltwoc(\mathbb{R}))}^2\leq \|\varphi\|_{\BMOc}.$$

    If $a\in I$, then we can simply replace it with $3J$ which has a comparable size. then a similarly reasoning as above also gives a control by $\| \frac1{\sqrt {|J|}} \iota_{J}(\varphi -\varphi_{J}) \big\|_{\Linfty(\mathcal{M};\Ltwoc(\mathbb{R}))}$.
    
    {\it Case 3}: $a\in I$. One can proceed as in case 1.

    {\it Case 4}: $I\subset (b,\infty)$.

    If $I\supset K_n$ for some $n$, then $|I|\geq |K_n|$ and losing a factor $\sqrt 2$ we can replace it by the bigger interval $\cup_{k\geq n} K_k\cup I$ which has size less than $2|I|$. We are then back to case 2.

    If $I\subset K_n$ for some $n$ there is noting to do as $\iota_I(\psi -\psi_I)=0$.

    If $I\subset K_n\cup K_{n+1}$ for some $n\geq 1$. By losing a constant factor we can assume that $I$ is symmetric with respect to left border of $K_n$ say $b_n$.
    If the size of $I$ is smaller than $2|K_{n+1}|$, then $\iota_I(\psi -\psi_I) $ is a simple function taking two values $\pm\frac 1 2  (\psi_{K_n}-\psi_{K_{n+1}})$. Then the norm of $\frac 1 {\sqrt{|I|}}\iota_I(\psi -\psi_I)$ is
    $\|\frac 1 2  (\psi_{K_n}-\psi_{K_{n+1}})\|$. It the same as if we consider
    $(b_n-|K_{n+1}|,b_n+|K_{n+1}|)$ and we are back to situation where $K_{n+1}\subset I$ (or we can use the argument of case 2).

    The only remaining situation is when $I\subset K_1\cup L$. Then an easy calculation gives that the involved norm is controlled
    by $\|\sqrt{2} \psi_{K_1}\|=\|\sqrt{2} \varphi_{J_1}\|$. This is where we use that $\varphi_J=0$ to say that (recall $6|J_1|=|J|$)
    $$\|\varphi_{J_1}\|\leq \sqrt 6 \| \frac 1{\sqrt{| J|}}
    \widetilde {\mathbb E^+} i_J (\varphi -0)\|\leq \sqrt{6} \|\varphi\|_{\BMOc}.$$ 

    {\it Case 5}: $I\subset (-\infty,a)$. It can be done similarly.

    We have dealt with all possible situations for $I$. This conclude Lemma~\ref{lem:garnett} with at most $C=2\sqrt 6$.

\subsection*{Acknowledgements}
The first author was supported by Grant SEV-2015-0554-19-3 funded by MCIN/AEI/10.13039/501100011033 and by ESF Investing in your future. In addition, he was partially supported by Grant PID 2019-107914GB-I00 (MCIN PI J. Parcet), and by the Madrid Government Program V under PRICIT Grant SI1/PJI/2019-00514. The second author was supported by ANR-19-CE40-0002.

    \bibliographystyle{amsplain}
    \bibliography{biblio}

@article {B05,
    AUTHOR = {Bownik, Marcin},
     TITLE = {\textit{ {B}oundedness of operators on {H}ardy spaces via atomic
              decompositions}},
   JOURNAL = {Proc. Amer. Math. Soc.},
  FJOURNAL = {Proceedings of the American Mathematical Society},
    VOLUME = {133},
      YEAR = {2005},
    NUMBER = {12},
     PAGES = {3535--3542},
      ISSN = {0002-9939,1088-6826},
   MRCLASS = {42B25 (47B38)},
  MRNUMBER = {2163588},
MRREVIEWER = {Charles\ N.\ Moore},
       DOI = {10.1090/S0002-9939-05-07892-5},
       URL = {https://doi.org/10.1090/S0002-9939-05-07892-5},
}

@book {HNVW16,
	AUTHOR = {Hyt\"{o}nen, T. and van Neerven, J. and Veraar, M. and
	Weis, L.},
	TITLE = {Analysis in {B}anach spaces. {V}ol. {I}. {M}artingales and
	{L}ittlewood-{P}aley theory},
	SERIES = {Ergebnisse der Mathematik und ihrer Grenzgebiete. 3. Folge. A
	Series of Modern Surveys in Mathematics [Results in
	Mathematics and Related Areas. 3rd Series. A Series of Modern
	Surveys in Mathematics]},
	VOLUME = {63},
	PUBLISHER = {Springer, Cham},
	YEAR = {2016},
	PAGES = {xvi+614},
	ISBN = {978-3-319-48519-5; 978-3-319-48520-1},
	MRCLASS = {46-02 (42B35 46E30)},
	MRNUMBER = {3617205},
	MRREVIEWER = {Adam Os\polhk ekowski},
}

@article{HLMP14,
    AUTHOR = {Hong, Guixiang and L\'{o}pez-S\'{a}nchez, Luis Daniel and
              Martell, Jos\'{e} Mar\'{\i}a and Parcet, Javier},
     TITLE = {Calder\'{o}n-{Z}ygmund operators associated to matrix-valued
              kernels},
   JOURNAL = {Int. Math. Res. Not. IMRN},
  FJOURNAL = {International Mathematics Research Notices. IMRN},
      YEAR = {2014},
    NUMBER = {5},
     PAGES = {1221--1252},
      ISSN = {1073-7928,1687-0247},
   MRCLASS = {42B25 (46L52 60G46)},
  MRNUMBER = {3178596},
       DOI = {10.1093/imrn/rns250},
       URL = {https://doi.org/10.1093/imrn/rns250},
}

@article {JMP14,
    AUTHOR = {Junge, Marius and Mei, Tao and Parcet, Javier},
     TITLE = {Smooth {F}ourier multipliers on group von {N}eumann algebras},
   JOURNAL = {Geom. Funct. Anal.},
  FJOURNAL = {Geometric and Functional Analysis},
    VOLUME = {24},
      YEAR = {2014},
    NUMBER = {6},
     PAGES = {1913--1980},
      ISSN = {1016-443X,1420-8970},
   MRCLASS = {42B30 (42B35 46L52)},
  MRNUMBER = {3283931},
MRREVIEWER = {Yasuo\ Komori-Furuya},
       DOI = {10.1007/s00039-014-0307-2},
       URL = {https://doi.org/10.1007/s00039-014-0307-2},
}

@article {XXX16,
    AUTHOR = {Xia, Runlian and Xiong, Xiao and Xu, Quanhua},
     TITLE = {Characterizations of operator-valued {H}ardy spaces and
              applications to harmonic analysis on quantum tori},
   JOURNAL = {Adv. Math.},
  FJOURNAL = {Advances in Mathematics},
    VOLUME = {291},
      YEAR = {2016},
     PAGES = {183--227},
      ISSN = {0001-8708},
   MRCLASS = {46L52 (42B30 46L07 47L65)},
  MRNUMBER = {3459017},
MRREVIEWER = {Maria Ramirez-Solano},
       DOI = {10.1016/j.aim.2015.12.023},
       URL = {https://doi.org/10.1016/j.aim.2015.12.023},
}

@article {C18,
    AUTHOR = {Cadilhac, L\'{e}onard},
     TITLE = {Weak boundedness of {C}alder\'{o}n-{Z}ygmund operators on
              noncommutative {$L_1$}-spaces},
   JOURNAL = {J. Funct. Anal.},
  FJOURNAL = {Journal of Functional Analysis},
    VOLUME = {274},
      YEAR = {2018},
    NUMBER = {3},
     PAGES = {769--796},
      ISSN = {0022-1236},
   MRCLASS = {42B25 (46L10)},
  MRNUMBER = {3734984},
MRREVIEWER = {Maria J. Carro},
       DOI = {10.1016/j.jfa.2017.11.003},
       URL = {https://doi.org/10.1016/j.jfa.2017.11.003},
}

@article {CCP22,
    AUTHOR = {Cadilhac, L\'{e}onard and Conde-Alonso, Jos\'{e} M. and Parcet,
              Javier},
     TITLE = {Spectral multipliers in group algebras and noncommutative
              {C}alder\'{o}n-{Z}ygmund theory},
   JOURNAL = {J. Math. Pures Appl. (9)},
  FJOURNAL = {Journal de Math\'{e}matiques Pures et Appliqu\'{e}es. Neuvi\`eme S\'{e}rie},
    VOLUME = {163},
      YEAR = {2022},
     PAGES = {450--472},
      ISSN = {0021-7824},
   MRCLASS = {42B20 (42B15 46L52)},
  MRNUMBER = {4438906},
MRREVIEWER = {Xudong Lai},
       DOI = {10.1016/j.matpur.2022.05.011},
       URL = {https://doi.org/10.1016/j.matpur.2022.05.011},
}

@article {CP19,
    AUTHOR = {Conde-Alonso, Jos\'{e} M. and Parcet, Javier},
     TITLE = {Nondoubling {C}alder\'{o}n-{Z}ygmund theory: a dyadic approach},
   JOURNAL = {J. Fourier Anal. Appl.},
  FJOURNAL = {The Journal of Fourier Analysis and Applications},
    VOLUME = {25},
      YEAR = {2019},
    NUMBER = {4},
     PAGES = {1267--1292},
      ISSN = {1069-5869},
   MRCLASS = {42A61 (42B20 42B35 46L51 46M35)},
  MRNUMBER = {3977117},
MRREVIEWER = {Charles N. Moore},
       DOI = {10.1007/s00041-018-9624-4},
       URL = {https://doi.org/10.1007/s00041-018-9624-4},
}

@book {DU77,
    AUTHOR = {Diestel, J. and Uhl, Jr., J. J.},
     TITLE = {Vector measures},
    SERIES = {Mathematical Surveys, No. 15},
      NOTE = {With a foreword by B. J. Pettis},
 PUBLISHER = {American Mathematical Society, Providence, R.I.},
      YEAR = {1977},
     PAGES = {xiii+322},
   MRCLASS = {28A45 (46G10 46B05)},
  MRNUMBER = {0453964},
MRREVIEWER = {Robert E. Huff},
}

@article {FS72,
    AUTHOR = {Fefferman, C. and Stein, E. M.},
     TITLE = {{$H^{p}$} spaces of several variables},
   JOURNAL = {Acta Math.},
  FJOURNAL = {Acta Mathematica},
    VOLUME = {129},
      YEAR = {1972},
    NUMBER = {3-4},
     PAGES = {137--193},
      ISSN = {0001-5962},
   MRCLASS = {42A40 (30A78 42A18 42A92)},
  MRNUMBER = {447953},
MRREVIEWER = {Alberto Torchinsky},
       DOI = {10.1007/BF02392215},
       URL = {https://doi.org/10.1007/BF02392215},
}

@incollection {F90,
    AUTHOR = {Figiel, Tadeusz},
     TITLE = {Singular integral operators: a martingale approach},
 BOOKTITLE = {Geometry of {B}anach spaces ({S}trobl, 1989)},
    SERIES = {London Math. Soc. Lecture Note Ser.},
    VOLUME = {158},
     PAGES = {95--110},
 PUBLISHER = {Cambridge Univ. Press, Cambridge},
      YEAR = {1990},
   MRCLASS = {42B20 (46B09 46E40 47G10)},
  MRNUMBER = {1110189},
}

@book {G07,
    AUTHOR = {Garnett, John B.},
     TITLE = {Bounded analytic functions},
    SERIES = {Graduate Texts in Mathematics},
    VOLUME = {236},
   EDITION = {first},
 PUBLISHER = {Springer, New York},
      YEAR = {2007},
     PAGES = {xiv+459},
      ISBN = {978-0-387-33621-3; 0-387-33621-4},
   MRCLASS = {30D55 (30-02 30H05 46E15 46J15)},
  MRNUMBER = {2261424},
}

@article {GJP21,
    AUTHOR = {Gonz\'{a}lez-P\'{e}rez, Adri\'{a}n Manuel and Junge, Marius and Parcet,
              Javier},
     TITLE = {Singular integrals in quantum {E}uclidean spaces},
   JOURNAL = {Mem. Amer. Math. Soc.},
  FJOURNAL = {Memoirs of the American Mathematical Society},
    VOLUME = {272},
      YEAR = {2021},
    NUMBER = {1334},
     PAGES = {xiii+90},
      ISSN = {0065-9266},
      ISBN = {978-1-4704-4937-7; 978-1-4704-6750-0},
   MRCLASS = {42B20 (42B37 46L51 47G30 81R60)},
  MRNUMBER = {4320770},
MRREVIEWER = {Tao Mei},
       DOI = {10.1090/memo/1334},
       URL = {https://doi.org/10.1090/memo/1334},
}

@book {GR85,
    AUTHOR = {Garc\'{\i}a-Cuerva, Jos\'{e} and Rubio de Francia, Jos\'{e} L.},
     TITLE = {Weighted norm inequalities and related topics},
    SERIES = {North-Holland Mathematics Studies},
    VOLUME = {116},
      NOTE = {Notas de Matem\'{a}tica [Mathematical Notes], 104},
 PUBLISHER = {North-Holland Publishing Co., Amsterdam},
      YEAR = {1985},
     PAGES = {x+604},
      ISBN = {0-444-87804-1},
   MRCLASS = {42B20 (42B25 46Exx 47B38)},
  MRNUMBER = {807149},
MRREVIEWER = {Kenneth F. Andersen},
}

@article {H06,
    AUTHOR = {Hyt\"{o}nen, Tuomas},
     TITLE = {Vector-valued wavelets and the {H}ardy space {$H^1(\Bbb
              R^n,X)$}},
   JOURNAL = {Studia Math.},
  FJOURNAL = {Studia Mathematica},
    VOLUME = {172},
      YEAR = {2006},
    NUMBER = {2},
     PAGES = {125--147},
      ISSN = {0039-3223},
   MRCLASS = {42B30 (42C40 46E40)},
  MRNUMBER = {2204960},
MRREVIEWER = {Sarjoo Prasad Yadav},
       DOI = {10.4064/sm172-2-2},
       URL = {https://doi.org/10.4064/sm172-2-2},
}

@article {JMPX21,
    AUTHOR = {Junge, Marius and Mei, Tao and Parcet, Javier and Xia,
              Runlian},
     TITLE = {Algebraic {C}alder\'{o}n-{Z}ygmund theory},
   JOURNAL = {Adv. Math.},
  FJOURNAL = {Advances in Mathematics},
    VOLUME = {376},
      YEAR = {2021},
     PAGES = {Paper No. 107443, 72},
      ISSN = {0001-8708},
   MRCLASS = {46L52 (42B20 46L07 46L55 47D07 60J25)},
  MRNUMBER = {4178915},
MRREVIEWER = {Pierre Portal},
       DOI = {10.1016/j.aim.2020.107443},
       URL = {https://doi.org/10.1016/j.aim.2020.107443},
}

@article {LP86,
    AUTHOR = {Lust-Piquard, Fran\c{c}oise},
     TITLE = {In\'{e}galit\'{e}s de {K}hintchine dans {$C_p\;(1<p<\infty)$}},
   JOURNAL = {C. R. Acad. Sci. Paris S\'{e}r. I Math.},
  FJOURNAL = {Comptes Rendus des S\'{e}ances de l'Acad\'{e}mie des Sciences. S\'{e}rie
              I. Math\'{e}matique},
    VOLUME = {303},
      YEAR = {1986},
    NUMBER = {7},
     PAGES = {289--292},
      ISSN = {0249-6291},
   MRCLASS = {47B10 (47D25 60B11)},
  MRNUMBER = {859804},
MRREVIEWER = {Werner Linde},
}

@article {M07,
    AUTHOR = {Mei, Tao},
     TITLE = {Operator valued {H}ardy spaces},
   JOURNAL = {Mem. Amer. Math. Soc.},
  FJOURNAL = {Memoirs of the American Mathematical Society},
    VOLUME = {188},
      YEAR = {2007},
    NUMBER = {881},
     PAGES = {vi+64},
      ISSN = {0065-9266},
   MRCLASS = {46L52 (42B30 42B35 46B70 46M35)},
  MRNUMBER = {2327840},
MRREVIEWER = {Christian Le Merdy},
       DOI = {10.1090/memo/0881},
       URL = {https://doi.org/10.1090/memo/0881},
}

@book {Mey90,
    AUTHOR = {Meyer, Yves},
     TITLE = {Wavelets and operators},
    SERIES = {Cambridge Studies in Advanced Mathematics},
    VOLUME = {37},
      NOTE = {Translated from the 1990 French original by D. H. Salinger},
 PUBLISHER = {Cambridge University Press, Cambridge},
      YEAR = {1992},
     PAGES = {xvi+224},
      ISBN = {0-521-42000-8; 0-521-45869-2},
   MRCLASS = {42-02 (42C15 46E30 47G99)},
  MRNUMBER = {1228209},
}

@book {MeyC97,
    AUTHOR = {Meyer, Yves and Coifman, Ronald},
     TITLE = {Wavelets},
    SERIES = {Cambridge Studies in Advanced Mathematics},
    VOLUME = {48},
      NOTE = {Calder\'{o}n-Zygmund and multilinear operators,
              Translated from the 1990 and 1991 French originals by David
              Salinger},
 PUBLISHER = {Cambridge University Press, Cambridge},
      YEAR = {1997},
     PAGES = {xx+315},
      ISBN = {0-521-42001-6; 0-521-79473-0},
   MRCLASS = {42-02 (31B10 42B20 42C15)},
  MRNUMBER = {1456993},
}

@article {JLX06,
    AUTHOR = {Junge, Marius and Le Merdy, Christian and Xu, Quanhua},
     TITLE = {{$H^\infty$} functional calculus and square functions on
              noncommutative {$L^p$}-spaces},
   JOURNAL = {Ast\'{e}risque},
  FJOURNAL = {Ast\'{e}risque},
    NUMBER = {305},
      YEAR = {2006},
     PAGES = {vi+138},
      ISSN = {0303-1179},
   MRCLASS = {46L52 (46L07 46L55 46L89 47A60 47D06 47L25)},
  MRNUMBER = {2265255},
MRREVIEWER = {Fyodor A. Sukochev},
}

@article {P09,
    AUTHOR = {Parcet, Javier},
     TITLE = {Pseudo-localization of singular integrals and noncommutative
              {C}alder\'{o}n-{Z}ygmund theory},
   JOURNAL = {J. Funct. Anal.},
  FJOURNAL = {Journal of Functional Analysis},
    VOLUME = {256},
      YEAR = {2009},
    NUMBER = {2},
     PAGES = {509--593},
      ISSN = {0022-1236},
   MRCLASS = {42B20 (46L52 46L53)},
  MRNUMBER = {2476951},
MRREVIEWER = {Fyodor A. Sukochev},
       DOI = {10.1016/j.jfa.2008.04.007},
       URL = {https://doi.org/10.1016/j.jfa.2008.04.007},
}

@article {P98,
    AUTHOR = {Pisier, G.},
     TITLE = {Non-commutative vector valued {$L_p$}-spaces and completely
              {$p$}-summing maps},
   JOURNAL = {Ast\'{e}risque},
  FJOURNAL = {Ast\'{e}risque},
    NUMBER = {247},
      YEAR = {1998},
     PAGES = {vi+131},
      ISSN = {0303-1179},
   MRCLASS = {46L52 (46B28 46E40 46L05 46M35 47B10 47L25)},
  MRNUMBER = {1648908},
MRREVIEWER = {Wend Werner},
}

@phdthesis{R01,
  author  = "\'Eric Ricard",
  title   = "D\'ecomposition de {H}1, {M}ultiplicateurs de {S}chur et {E}spaces d'{O}perateurs",
  school  = "Universit\'e Paris 6",
  year    = "2001"
}

@article {S73,
    AUTHOR = {Segal, I. E.},
     TITLE = {Equivalences of measure spaces},
   JOURNAL = {Amer. J. Math.},
  FJOURNAL = {American Journal of Mathematics},
    VOLUME = {73},
      YEAR = {1951},
     PAGES = {275--313},
      ISSN = {0002-9327},
   MRCLASS = {27.2X},
  MRNUMBER = {41191},
MRREVIEWER = {F. I. Mautner},
       DOI = {10.2307/2372178},
       URL = {https://doi.org/10.2307/2372178},
}

@book {S71,
    AUTHOR = {Sakai, Sh\^{o}ichir\^{o}},
     TITLE = {{$C\sp*$}-algebras and {$W\sp*$}-algebras},
    SERIES = {Ergebnisse der Mathematik und ihrer Grenzgebiete, Band 60},
 PUBLISHER = {Springer-Verlag, New York-Heidelberg},
      YEAR = {1971},
     PAGES = {xii+253},
   MRCLASS = {46L05 (46L10)},
  MRNUMBER = {0442701},
MRREVIEWER = {Richard I. Loebl},
}

@article {T01,
    AUTHOR = {Tolsa, Xavier},
     TITLE = {B{MO}, {$H^1$}, and {C}alder\'{o}n-{Z}ygmund operators for non
              doubling measures},
   JOURNAL = {Math. Ann.},
  FJOURNAL = {Mathematische Annalen},
    VOLUME = {319},
      YEAR = {2001},
    NUMBER = {1},
     PAGES = {89--149},
      ISSN = {0025-5831},
   MRCLASS = {42B35 (42B20 42B30 46E30 47B38 47G10)},
  MRNUMBER = {1812821},
MRREVIEWER = {Javier Duoandikoetxea},
       DOI = {10.1007/PL00004432},
       URL = {https://doi.org/10.1007/PL00004432},
}

\end{document}